\newtheorem{theorem}{Theorem}
\newtheorem*{lemma*}{Claim}
\newtheorem{lemma}[theorem]{Lemma}
\newtheorem{coro}[theorem]{Corollary}
\newtheorem{definition}{Definition}
\newtheorem{prop}[theorem]{Proposition}
\numberwithin{theorem}{section}
\numberwithin{equation}{section}
\title[Colour Isomorphic Pairs]{On Color Isomorphic Pairs in Proper Edge Colourings of Complete Graphs}
\author{Xiao-Chuan Liu}
\address[Liu]{Instituto de Matem\' atica e Estat\' istica da Universidade de S\~ao Paulo,
R. do Mat\~ ao, 1010 - Vila Universitaria, S\~ ao Paulo, Brasil} 
\email{lxc1984@gmail.com}
\author{Xu Yang}
\address[Yang]{Instituto de Computação da Universidade Federal de Alagoas,
	Av. Lourival Melo Mota, S/N, Maceió, Brasil} 
\email{yang@ic.ufal.br}
\begin{document}
\maketitle{}
\begin{abstract} Following the recent paper \cite{conlon2021repeated} which initiated the study of 
colour isomorphism problems for complete graphs, we obtain upper bounds for 
$f_2(n,H)$ for a family of graphs $H$ obtained 
as the $K_0$-th rooted power of a balanced rooted tree for some sufficiently large $K_0$. The proof uses the random polynomial method of Bukh. We also obtain matching lower bounds for $1$-subdivisions of the complete bipartite graph.
\end{abstract}

\section{Introduction}
Relating to the study of rainbow patterns in proper edge colourings of complete graphs, 
Conlon and Tyomkyn recently in~\cite{conlon2021repeated}
proposed an extremal problem concerning colour isomorphic copies in proper edge colourings of complete graphs. 
Fixing a graph $H$, any $k$ vertex disjoint copies of $H$ in a properly edge coloured complete graph $K_n$
are called colour isomorphic if they are mapped to each other via a graph isomorphism preserving all the edge colours.

\begin{definition}[Defintion 1.1 of \cite{conlon2021repeated}]\label{colour_definition}
For any graph $H$ and an integer $k\geq 2$ fixed, 
write $f_k(n,H)$ to denote the smallest integer number $k_0$ so that there exists a proper colouring of $E(K_n)$ with $k_0$ colours which contains no 
$k$ vertex disjoint copies of $H$ in $K_n$ which are pair-wise colour isomorphic.
\end{definition}

This is indeed an extremal problem (see Section 2 of~\cite{conlon2021repeated}). 
Similar to other extremal problems in graph theory, 
the main objective in this topic is to determine the value or the order of $f_k(n,H)$ for various graphs $H$. 
In this paper, we are particularly interested in the $f_2$ function, which is the largest one among all $f_k$ functions by definition. 

\begin{definition} We say $T$ is a rooted tree, if there is an independent vertex subset $R\subset V(T)$, which we call the set of roots. We usually write $r=|R|$ to denote its size, and sometimes also write down $R=\{u_1,\cdots,u_r\}$ to be more precise.
In any graph $G$, when we fix a set $X=\{x_1,\cdots,x_r\}$, we say a labelled 
copy of $T$ is rooted at $X$, if the embedding of $T$ into $G$ sends each $u_i$ to $x_i$, for all $i=1,\cdots,r$. 
\end{definition}
\begin{definition}\label{density_definition}
The density of a rooted tree $T$ with its root set $R$, is defined as 
$\rho_T= \frac{e(T)}{v(T)-|R|}$. 
We say the rooted tree $T$ is balanced if the following holds. For any subset $S\subset V(T)\backslash R$, 
write $e(S)$ to denote the number of edges for which at least 
one endpoint belongs to $S$, then $\frac{e(S)}{|S|}\geq \rho_T$.
Define the \textit{$K$-th power of $T$}, written as $T^{K}$,
to be the union of $K$ labelled copies of $T$, which are all rooted at the same set $R$, satisfying that 
all the unrooted vertices are pair-wise disjoint.
\end{definition}

Let $T$ denote a rooted balanced tree, with $r$ roots, $b$ edges, and $a$ unrooted vertices.
Write $T^K$ to denote the \textit{$K$-th power of $T$}.
\begin{theorem}\label{main}
Let $T$ be a rooted balanced tree with density $\rho_T=\frac ba$, and assume $2a\geq b$. 
Then there exists a positive integer $K_0$, such that 
\begin{equation}
f_2(n, T^{K_0})=O(n^{\frac {2a}{b}}).
\end{equation}
\end{theorem}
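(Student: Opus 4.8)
The plan is to establish the upper bound by a random algebraic construction of Bukh type: I will produce a \emph{random} proper edge-colouring of $K_n$ with $O(n^{2a/b})$ colours and show that, for a sufficiently large constant $K_0=K_0(T)$, with positive probability it contains no colour isomorphic pair of vertex-disjoint copies of $T^{K_0}$. Concretely, I will work over $\mathbb{F}_q$ with $q$ a prime power of size $\Theta(n^{1/b})$ (calibrated, together with the number of random polynomials used, so that the total colour count is $\Theta(n^{2a/b})$), identifying the vertices with an $n$-element subset of some $\mathbb{F}_q^{\,d}$; the colour of an edge will be a tuple, one block of coordinates used to keep the colouring proper and the remaining $2a-b$ coordinates being evaluations of independent uniformly random symmetric polynomials of a large constant degree $D$. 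Note $2a\ge b$ is exactly what makes this many random coordinates available.

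The first step is a purely combinatorial reduction. If $K_0$ exceeds $\Delta(T)$, the root vertices of the abstract graph $T^{K_0}$ are the ones of strictly largest degree, so any colour isomorphism $\psi$ between disjoint copies $F_1,F_2$ of $T^{K_0}$ sends roots to roots, induces a bijection $\phi$ between the root-tuple $X$ of $F_1$ and the root-tuple $Y$ of $F_2$, and restricts to colour isomorphisms, compatible with $\phi$, between the $K_0$ constituent rooted copies of $T$ in $F_1$ and those in $F_2$; moreover these $K_0$ copies, and their images, are internally vertex-disjoint. Hence it suffices to show that, with positive probability, for \emph{every} ordered pair of disjoint $r$-tuples $(X,Y)$ and every admissible $\phi$ one cannot find $K_0$ internally-disjoint copies of $T$ rooted at $X$ each colour-matched via $\phi$ to a copy of $T$ rooted at $Y$. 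Since there are only $n^{O(1)}$ triples $(X,Y,\phi)$, after a union bound we fix one and must bound the probability that such a family exists by $n^{-\omega(1)}$.

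Now the algebraic heart. Fix $(X,Y,\phi)$. Since the roots of $T$ form an independent set, every edge of $T$ joins a root to an unrooted vertex or two unrooted vertices; reading off the "properness block" of the colour-matching equations on these edges and propagating along the connected rooted tree pins down the matched copy $S'$ rooted at $Y$ as an explicit "shear" of the copy $S$ rooted at $X$, with shear vector depending only on $(X,Y,\phi)$ — or the system is already inconsistent and we are done. It remains to bound the number of copies $S$ for which $S$, its shear $S'$, and all non-degeneracy conditions hold and, for every edge of $T$, the random polynomials take equal values on the image of that edge in $S$ and on its matched image in $S'$; equivalently, to bound the $\mathbb{F}_q$-points of the variety cut out by these equations in the space of placements of the $a$ unrooted vertices of $S$. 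Exposing the unrooted vertices one at a time along an elimination order of $T$ and invoking a random-algebraic lemma of Bukh type — for random polynomials of large enough bounded degree the relevant affine variety has, outside a $q^{-\omega(1)}$ probability event, exactly its expected dimension — reduces the matter to the numerical condition that every $S\subseteq V(T)\setminus R$ meets at least $\rho_T|S|$ edges of $T$, which is precisely the hypothesis that $T$ is \emph{balanced}; taking $S=V(T)\setminus R$ gives equality, pinning the expected dimension at $0$ rather than negative, so the number of colour-matched copies is bounded by a constant $C=C(T,D)$. Then any $K_0>aC$ forbids $K_0$ internally-disjoint ones, and degenerate copies, where unrooted vertices collide with each other or with roots, are absorbed by running the same argument on the (still balanced) quotient trees.

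\textbf{The main obstacle.} I expect the real work to lie in the design of the "properness block". The naive additive choice $\{x,y\}\mapsto x+y$ is translation-homogeneous: its colour-matched shears already come in positive-dimensional families, so it fails outright and cannot serve as the skeleton. One must instead build the colouring around a proper "skeleton" colouring whose shift maps are pairwise rigid while still using only $O(n)$ colours — plausibly itself a random algebraic object, e.g.\ a random algebraic analogue of a $1$-factorisation. Making the budget bookkeeping close — that exactly $2a-b$ coordinates worth of genuine randomness survive on top of the skeleton, which is what produces the exponent $2a/b$ — and in particular handling the extremal case $2a=b$, where no random coordinates remain and everything must be extracted from the skeleton, is where I expect the true difficulty to be.
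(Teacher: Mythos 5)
There is a genuine gap, and it is exactly the one you flag yourself: the colouring is never actually constructed. Your whole scheme hinges on a ``properness block''/rigid skeleton occupying $b$ of the $2a$ coordinate budget, you correctly observe that the obvious additive choice $\{x,y\}\mapsto x+y$ fails (its colour-matched configurations come in positive-dimensional families), and you leave the replacement as an open design problem; moreover your algebraic step (pinning $S'$ down as a ``shear'' of $S$) is itself tied to that abandoned additive skeleton, and your worry about the case $2a=b$ is an artifact of the same design. The paper shows this obstacle is a red herring: one should not try to make the random colouring proper at all. It colours the ordered pair $(\min,\max)$ of each edge of $K_{q^b}$ (vertices identified with $\mathbb{F}_q^{b}$) by \emph{all} $2a$ coordinates of independent uniform polynomials $F=(f_1,\dots,f_{2a})$ of bounded degree, proves via a moment computation plus the Lang--Weil dichotomy that with high probability every colour class has bounded maximum degree --- this is the only place the hypothesis $2a\ge b$ is used (the fibre count $q^{bj}q^{-2aj}\le 1$) --- and then invokes Lemma~\ref{bounded_lemma} (Proposition~2.4 of \cite{conlon2021repeated}, a Vizing-type reduction) to convert the $C_0$-bounded colouring into a proper one with only a constant-factor loss in the number of colours, i.e.\ still $O(q^{2a})=O(n^{2a/b})$. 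Without this (or some substitute for your missing skeleton), your argument does not produce a proper colouring and the bound on $f_2$ does not follow.

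A secondary point: even granting a colouring, your per-root-pair analysis bounds the number of single colour-matched copies of $T$ by a constant via an ``expected dimension'' claim for one copy at a time. The paper's actual counting is a moment bound on the number of colour isomorphic \emph{pairs} $(T_1,T_2)$ rooted at a fixed $(X,Y)$: when $p$ such pairs overlap, the relevant quantity is the constraint number $k(H_1,H_2,\mathrm{EC})$ of the union, and the heart of the proof is Lemma~\ref{balance_two}, an induction on $p$ using balancedness to show $2k\ge\rho_T\,(v(H_1\cup H_2)-2r)$, which makes every term $q^{b(v-2r)}q^{-2ak}$ bounded; Lang--Weil (applied to the ``shadow'' varieties, which also absorbs your ordering/symmetrisation and degeneracy issues) then upgrades bounded moments to ``at most a constant $K_0$ with probability $1-q^{-p}$'', and a union bound over the $q^{2rb}$ root pairs finishes. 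Your outline gestures at the same ingredients (balancedness as a dimension count, Lang--Weil, union bound over roots), but without the union-of-pairs bookkeeping and, above all, without a construction, it does not constitute a proof.
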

The proof of this theorem is based on the 
random polynomial method of Bukh in~\cite{Bukhrandom} (see also~\cite{Conlon2019graphs} and \cite{bukh2018rational}).
In fact, similar to the result in~\cite{bukh2018rational}, 
Theorem~\ref{main} can also be generalized to a family version. 
Moreover, as an important step in~\cite{bukh2018rational} to attack a family version of the rational exponent conjecture, 
the authors show that for any rational number $r\in (0,1)$, there exists an example of a rooted tree such that $r=\frac{a}{b}=\frac{1}{\rho_T}$ (see Lemma 1.1 of~\cite{bukh2018rational}).
It follows that for the extremal problem of $f_2(n,H)$,
all rationals $r' \in (1,2)$ can be realized by some rooted tree with $r'=\frac{2a}{b}$.

We also have the following corollary. 
\begin{coro}\label{2a<b}
 Let $T$ be a balanced rooted tree with density $\rho_T=\frac ba$, and assume $2a \leq b$. 
Then for sufficiently large $K_0$, 
\begin{equation}
f_2(n, T^{K_0})=\Theta(n).
\end{equation}
\end{coro}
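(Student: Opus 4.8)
The plan is to prove the two bounds separately; neither needs new machinery beyond Theorem~\ref{main}. The lower bound $f_2(n,T^{K_0})=\Omega(n)$ comes for free: any proper edge colouring of $K_n$ uses at least $\chi'(K_n)\ge n-1$ colours, so the value $f_2(n,T^{K_0})$ — which is finite by the upper bound established below — is at least $n-1$. Thus the real content is $f_2(n,T^{K_0})=O(n)$, and I would deduce this from the boundary case $\rho=2$ of Theorem~\ref{main} by a subgraph-monotonicity argument.

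First I would record the monotonicity of $f_2$: if $H'$ is a subgraph of $H$, then $f_2(n,H)\le f_2(n,H')$. Indeed, fixing an embedded copy of $H'$ inside $H$ and restricting any colour-preserving isomorphism between two vertex-disjoint copies of $H$ to that sub-copy yields two vertex-disjoint colour isomorphic copies of $H'$; hence a colouring avoiding the latter also avoids the former. The same reasoning gives $f_2(n,(T')^{K})\le f_2(n,(T')^{K'})$ for $K\ge K'$, since $(T')^{K'}\subseteq (T')^{K}$.

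Next I would locate a ``cherry'' inside $T$. Since $T$ is a tree, $b=e(T)=v(T)-1=r+a-1$, so the hypothesis $2a\le b$ reads $r\ge a+1$. As $R$ is independent, every edge of $T$ meets $R$ at most once, and the edges avoiding $R$ span a forest on the $a$ unrooted vertices, hence number at most $a-1$; therefore at least $b-(a-1)=r$ edges join a root to an unrooted vertex. If each unrooted vertex were incident to at most one such edge there would be at most $a<r$ of them, a contradiction, so some unrooted vertex $v$ has two distinct root-neighbours $u_1,u_2$. Let $T'$ be the path on $\{u_1,v,u_2\}$ with edges $u_1v$ and $vu_2$, rooted at $\{u_1,u_2\}$: it is a balanced rooted tree with $a'=1$, $b'=2$, hence $\rho_{T'}=2$ and $2a'=b'$, and in $T^{K}$ the $K$ vertex-disjoint copies of $v$, each joined to $u_1$ and $u_2$, span a copy of $(T')^{K}$, so $(T')^{K}\subseteq T^{K}$.

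Finally I would combine these. Theorem~\ref{main} applied to the balanced tree $T'$ (which lies in the admissible regime $2a'\ge b'$) yields a threshold $K_1$ with $f_2(n,(T')^{K_1})=O(n^{2a'/b'})=O(n)$. For any $K_0\ge K_1$ the two monotonicities then give $f_2(n,T^{K_0})\le f_2(n,(T')^{K_0})\le f_2(n,(T')^{K_1})=O(n)$, and with the lower bound this is $f_2(n,T^{K_0})=\Theta(n)$ for all sufficiently large $K_0$. I do not expect a genuine obstacle here: the only step that is not purely formal is the short counting argument producing the cherry, and everything else is subgraph-monotonicity of $f_2$ together with the endpoint case of Theorem~\ref{main}.
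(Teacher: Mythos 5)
Your proposal is correct and follows essentially the same route as the paper: the lower bound from the fact that any proper edge colouring of $K_n$ needs at least $n-1$ colours, and the upper bound by locating an unrooted vertex adjacent to two roots (your ``cherry'', the paper's rooted path of length $2$ with density exactly $2$), applying Theorem~\ref{main} to it, and using subgraph-monotonicity of $f_2$. Your only additions are cosmetic: a slightly different counting argument for the cherry (edge count rather than vertex count) and an explicit statement of the monotonicity in the power $K$, which the paper leaves implicit in the phrase ``for sufficiently large $t$''.
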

Note this gives the correct order for all complete bipartite graphs $K_{s,t}$ with $s\geq 2$ and 
$t$ sufficiently large. 
It was known that (see Theorem 1.3 (iii) of~\cite{conlon2021repeated}) for a bipartite graph $H$,  
\begin{equation}\label{2v-2_thm}
f_2(n,H) = \left\{
        \begin{array}{ll}
            O(n), &  \text{ if } e(H)\geq 2v(H)-2, \\
            O\big(n^{\frac{2v(H)-2}{e(H)}}\big), & \text{ if } e(H)\leq 2v(H)-2.
        \end{array}
    \right.
\end{equation}
So the above theorem shows in the family of large powers of balanced rooted trees with density $2$  and 
with $r\geq 2$,
 the first item as in the above statement happens with $e(H)=2v(H) -2r$. 

In the next theorem, we are going to obtain the matching lower bounds for 
another family of graphs, 
that is, $1$-subdivisions of complete bipartite graphs $K_{s,t}^{\text{sub}}$. 
\begin{theorem}\label{f2_lowerbound} 
For any $s\geq 2$ fixed, and for any 
$t$ sufficiently large depending on $s$, 
$f_2(n, K_{s,t}^{\text{sub}})=\Theta(n^{1+\frac 1s})$.
\end{theorem}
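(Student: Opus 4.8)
The bound $f_2(n,K_{s,t}^{\mathrm{sub}})=O(n^{1+1/s})$ is immediate from Theorem~\ref{main}. Write the $1$-subdivision as a rooted power: let $T$ be the rooted tree with root set $R=\{u_1,\dots,u_s\}$, one ``leaf'' vertex $w$, and $s$ ``subdivision'' vertices $x_1,\dots,x_s$, with edges $u_ix_i$ and $x_iw$ for $1\le i\le s$; then $a:=v(T)-|R|=s+1$, $b:=e(T)=2s$, $\rho_T=\tfrac{b}{a}=\tfrac{2s}{s+1}$, and $K_{s,t}^{\mathrm{sub}}=T^{t}$. One checks that $T$ is balanced — over $S\subseteq V(T)\setminus R$ the ratio $e(S)/|S|$ is minimised at $S=V(T)\setminus R$, where it equals $\rho_T$ — and that $2a=2(s+1)\ge 2s=b$. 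Since $T$ depends only on $s$, the threshold $K_0$ produced by Theorem~\ref{main} for $T$ depends only on $s$, so once $t$ is large enough in terms of $s$ we may take $K_0=t$ and get $f_2(n,T^{t})=O(n^{2a/b})=O(n^{1+1/s})$. This is exactly where the hypothesis ``$t$ sufficiently large depending on $s$'' is used.

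\textbf{Lower bound: reduction.} It remains to produce $c=c(s,t)>0$ such that every proper edge colouring of $K_n$ with at most $cn^{1+1/s}$ colours contains two vertex-disjoint colour-isomorphic copies of $K_{s,t}^{\mathrm{sub}}$. A copy is specified by an ordered root $s$-tuple $U=(u_1,\dots,u_s)$ together with $t$ pairwise internally disjoint \emph{petals}, a petal at $U$ being a leaf $w$ together with subdivision vertices $x_1,\dots,x_s$, $x_i$ lying on a $2$-path from $u_i$ to $w$; attach to a petal its \emph{type} $\bigl((c(u_1x_1),c(x_1w)),\dots,(c(u_sx_s),c(x_sw))\bigr)\in([m]^2)^{s}$. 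Since in a proper colouring the colour of the edge at $u_i$ already pins down $x_i$, and then the colour at $x_i$ towards $w$ pins down $w$, a petal at a fixed $U$ is determined by its type; hence the $t$ petals of a copy carry $t$ distinct types, and two copies — after a suitable relabelling of roots and leaves — are colour-isomorphic exactly when their two sets of petal types coincide. Thus it suffices to find two \emph{disjoint} ordered $s$-tuples $U,U'$ and a matching of size at least $t$ in the bipartite graph $B_{U,U'}$ on two copies of $V\setminus(U\cup U')$ where $w\sim w'$ whenever some type is realised both by a petal at $U$ with leaf $w$ and by a petal at $U'$ with leaf $w'$; passing from such a matching to the two copies costs only the routine verification that the associated petals can be chosen disjoint across the two copies, which removes only lower-order terms. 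Note that a \emph{random} pair $U,U'$ is useless here: the expected number of common types between $\mathcal T(U)$ and $\mathcal T(U')$ is $O(1)$, so one genuinely must locate a special pair.

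\textbf{The main obstacle.} Forcing $B_{U,U'}$ to have a large matching for some disjoint $U,U'$ is the heart of the argument, and it is here that the smallness of $m$ must be exploited non-trivially. A bare double count fails: the set of petal types lies in $[m]^{2s}$, and when $m\asymp n^{1+1/s}$ one has $m^{2s}\gg n^{2s+1}$, which already exceeds the total number of petals, so a priori types need not collide at all (for a rainbow colouring they do not, which is consistent since there $m\asymp n^2$). The repair has to use two facts. First, $m$ small forces most colour classes to be large: a one-line averaging shows that the classes of size at least a constant fraction of the mean $\binom n2/m\asymp n^{1-1/s}$ already cover a constant proportion of $E(K_n)$, so a positive fraction of vertices $v$ have many of their $2$-paths carrying such a ``popular'' colour on the leaf-side edge. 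Second, subdivisions are flexible: the subdivision vertices are fresh and of degree $2$, so after fixing the roots and leaves the colour pair on each $2$-path can still be chosen in $n-O(1)$ ways, which one would like to trade against collisions of types. One then restricts the count to petals whose edges use only popular colours, applies Cauchy--Schwarz over the (now much smaller) restricted type set to locate a dense pair $U,U'$, and extracts a matching of size $t$ in $B_{U,U'}$ by a greedy/Hall-type argument.

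\textbf{Where the difficulty concentrates.} The crude version of the restricted count above only reaches the weaker conclusion that the collision occurs once $m\lesssim n^{1+1/(2s)}$, i.e.\ it proves $f_2(n,K_{s,t}^{\mathrm{sub}})=\Omega(n^{1+1/(2s)})$, not $\Omega(n^{1+1/s})$. Closing this gap — showing the collision is already unavoidable at $cn^{1+1/s}$ colours — is the real content of the lower bound, and I expect it to require a more careful interleaving of the popular-colour reduction with the subdivision flexibility (e.g.\ controlling both the root-side and leaf-side colours of the petals simultaneously, or bounding the number of \emph{realisable} type matrices rather than all type matrices), and this is precisely the step that should force the exponent to be the same $2a/b=1+1/s$ dictated by the balancedness of $T$ in the upper bound. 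Once the collision is secured, the remaining work — tidying the matching so that all $st$ subdivision vertices of the two copies are distinct and disjoint from the roots and leaves, which is why $t$ is allowed to be as large as one needs — is bookkeeping.
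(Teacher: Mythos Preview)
Your upper bound via Theorem~\ref{main} is exactly the paper's route and is correct.

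The lower bound, however, is not a proof: you yourself flag that your petal-type collision count only reaches $\Omega(n^{1+1/(2s)})$ and you do not close the gap to $\Omega(n^{1+1/s})$. The paragraph headed ``Where the difficulty concentrates'' is a wish list, not an argument --- ``controlling both the root-side and leaf-side colours'' and ``bounding the number of realisable type matrices'' are suggestions, not lemmas, and nothing you wrote indicates that either actually recovers the missing factor $n^{1/(2s)}$. The obstruction you identified is real: the type space $[m]^{2s}$ is simply too large at $m\asymp n^{1+1/s}$ for a direct Cauchy--Schwarz over types to work.

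The paper's lower bound bypasses this obstruction entirely by a different construction. Rather than tracking petal types in $[m]^{2s}$, it builds an auxiliary bipartite graph $\mathcal F$ whose \emph{vertices} are ordered pairs of $K_n$-vertices (after a random $4$-partition $X_1\cup X_2\cup Y_1\cup Y_2$) and whose \emph{edges} are monochromatic $2$-matchings: $((x_1,x_2),(y_1,y_2))\in E(\mathcal F)$ iff $x_1y_1$ and $x_2y_2$ receive the same colour. A single Jensen step on the colour-class sizes gives $e(\mathcal F)=\omega(n^{3-1/s})=\omega\bigl(v(\mathcal F)^{3/2-1/(2s)}\bigr)$, which is already above the Tur\'an threshold $\mathrm{ex}(N,K_{s,t}^{\mathrm{sub}})=O(N^{3/2-1/(2s)})$ of Lemma~\ref{Kst'turan}. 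A \emph{clean} copy of $K_{s,t}^{\mathrm{sub}}$ in $\mathcal F$ --- one whose vertices use $2v$ distinct underlying $K_n$-vertices --- unpacks directly into two vertex-disjoint colour-isomorphic copies in $K_n$. The remaining work is to upgrade ``above Tur\'an threshold'' to ``clean copy'': the paper passes to an almost-regular subgraph $\mathcal F'$, colours pairs in $\mathcal A$ red/blue by common-neighbourhood size, and runs a Janzer-style greedy embedding (Lemmas~\ref{red_means_clean}--\ref{non_red_Y} and Proposition~\ref{big_light_neighbour}), the key structural input being Lemma~\ref{not_many_intersection}, that properness forbids two $\mathcal F$-neighbours of a common vertex from sharing an underlying $K_n$-vertex.

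The conceptual point you were missing is that encoding colour-isomorphism as a single monochromatic $2$-matching per edge converts the problem into a Tur\'an-type embedding of $K_{s,t}^{\mathrm{sub}}$ itself inside a graph on $\Theta(n^2)$ vertices; the correct exponent $3/2-1/(2s)$ then appears immediately from the edge count, rather than having to be squeezed out of a type-collision argument where, as you discovered, the arithmetic comes up short.
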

These numbers $1+\frac 1s$ seem to be the first known exponents between $1$ and $2$ for the $f_2$ functions. 
Below, in Section 2, we give the basic notation and gather a few lemmas. In Section 3, we prove Theorem~\ref{main} and Corollary~\ref{2a<b}, 
and in Section 4, we prove Theorem~\ref{f2_lowerbound}. In Section 5, we make some further comments on the connection with the classical Tur\'an number of bipartite graphs, providing one problem and one conjecture. 

\section{Preliminaries}
In this section, we fix notation and 
give a few lemmas introducing some previous relevant results.

Let $K_n$ denote the complete graph on $n$ vertices. A \textit{proper edge colouring} with some colour set $Q$
 is an assignment  $\chi: E(K_n)\to Q$ 
 so that no two adjacent edges are assigned with the same colour.
We also consider bounded edge colourings in the following sense. 
\begin{definition} We say a colouring of $E(K_n)$ is $C$-bounded for a constant $C$, 
if each colour class defines a graph on the vertex set $V(K_n)$ for which every vertex has degree at most $C$. 
\end{definition}
The following lemma follows quickly from Vizing's theorem on graph edge colourings. 
\begin{lemma}\label{bounded_lemma}[Proposition 2.4 of \cite{conlon2021repeated}]
 For any graph $H$, if $E(K_n)$ admits some 
 $C$-bounded colouring with $q$ colours containing no two colour isomorphic and vertex disjoint copies of $H$, then 
 it also admits some proper edge colouring with $(C+1)q$ colours containing no two colour isomorphic and vertex disjoint copies of $H$. 
 \end{lemma}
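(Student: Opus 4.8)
The plan is to take the given $C$-bounded colouring and refine it into a proper edge colouring by recolouring each colour class independently using Vizing's theorem, and then to observe that refining a colouring can only destroy, never create, colour isomorphic pairs.

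Concretely, suppose $\chi\colon E(K_n)\to Q$ is a $C$-bounded colouring with $|Q|=q$ and with no two vertex disjoint colour isomorphic copies of $H$. For each $c\in Q$, let $G_c$ be the spanning subgraph of $K_n$ with edge set $\chi^{-1}(c)$. By $C$-boundedness, $\Delta(G_c)\le C$, so Vizing's theorem supplies a proper edge colouring $\psi_c\colon E(G_c)\to\{1,\dots,C+1\}$. I would then define $\chi'(e)=(\chi(e),\psi_{\chi(e)}(e))$, a colouring of $E(K_n)$ using at most $(C+1)q$ colours. Checking that $\chi'$ is proper is immediate: two adjacent edges with distinct $\chi$-colours already differ in the first coordinate of $\chi'$, while two adjacent edges of the same $\chi$-colour $c$ are adjacent edges of $G_c$ and hence receive distinct $\psi_c$-values, so they differ in the second coordinate.

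The remaining point is that $\chi'$ still has no two vertex disjoint colour isomorphic copies of $H$. Here I would use that $\chi$ factors through $\chi'$ via the projection onto the first coordinate: if $\phi$ is a graph isomorphism between two vertex disjoint copies $H_1,H_2$ of $H$ that preserves $\chi'$ on every edge, then composing with this projection shows that $\phi$ also preserves $\chi$ on every edge, so the pair $(H_1,H_2)$ would already have been a forbidden pair for $\chi$ --- a contradiction. Hence $\chi'$ is the desired proper colouring with $(C+1)q$ colours.

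The argument has no real obstacle; the only thing to be careful about is this monotonicity observation --- that a colour-preserving isomorphism with respect to a finer colouring is automatically colour-preserving with respect to any coarsening --- which is precisely what makes it safe to replace each $C$-bounded colour class by a Vizing palette of size $C+1$.
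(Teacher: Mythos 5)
Your argument is correct and is exactly the intended one: the paper does not reprove this statement but cites it as Proposition 2.4 of Conlon--Tyomkyn, noting only that it ``follows quickly from Vizing's theorem,'' and your proof---applying Vizing to each $C$-bounded colour class, forming the product colouring $\chi'(e)=(\chi(e),\psi_{\chi(e)}(e))$, and observing that a colour-preserving isomorphism for the refinement projects to one for the original colouring---is precisely that standard argument. No gaps; the refinement-monotonicity point you isolate is indeed the only step needing a remark.
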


The next standard lemma will be needed for reducing a graph to a more regular one. 
\begin{lemma}\label{almost_regular}[essentially Theorem 1 of~\cite{erdos1965some}, 
see also Proposition 2.7 of~\cite{Jiang2012turan} or Lemma 2.3 of~\cite{conlon2021more}]
 Let $\varepsilon\in (0,1)$. 
 Suppose $G$ is a graph with $n$ vertices and with $e(G) =\omega (n^{1+\varepsilon})$.
Then one can find a subgraph $G'$ of $G$, with the following properties. 
\begin{enumerate} 
\item $G'$ is bipartite on a bipartition $A\cup B$.
\item for some $m$ tending to infinity when $n$ tends to infinity, 
we can choose $|A|=m$ and $|B|=\Theta(m)$. 
\item for some $\delta= \omega(m^{\varepsilon})$, and some absolute constant $\Delta$, 
for every vertex $v\in G'$, $\text{deg}_{G'}(v)\in [\delta,\Delta \delta]$.
\end{enumerate}
Moreover, a graph satisfying items (2) and (3) is called a balanced $\Delta$-almost regular graph. 
\end{lemma}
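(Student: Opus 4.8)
The plan is to obtain $G'$ by a short chain of reductions, each retaining a controlled fraction of the edges, so that the super-polynomial surplus in the hypothesis $e(G)=\omega(n^{1+\varepsilon})$ survives to the end and forces both $m\to\infty$ and $\delta=\omega(m^{\varepsilon})$. First I would make the graph bipartite: a maximum cut of $G$ yields a spanning bipartite subgraph $H_0$ on a bipartition $X_0\cup Y_0$ with $e(H_0)\ge e(G)/2=\omega(n^{1+\varepsilon})$ and $|X_0|+|Y_0|\le n$, and since every subsequent graph will be an induced subgraph of $H_0$ lying on a subset of $X_0$ together with a subset of $Y_0$, property~(1) holds automatically.

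Next, for the part sizes (property~(2)) I would rebalance: whenever the current parts satisfy $|X|\le|Y|$, split $Y$ into $\lceil|Y|/|X|\rceil$ blocks of size $\Theta(|X|)$ and keep the block receiving the most edges from $X$, which retains at least a $|X|/(2|Y|)$ fraction of the edges; a short computation using $e(H)=\omega(n^{1+\varepsilon})$ and $|Y|\le n$ (so $e(H)=\omega(|Y|^{1+\varepsilon})$) shows the new bipartite graph still has $\omega(m^{1+\varepsilon})$ edges with $m:=|X|=\omega(n^{\varepsilon})\to\infty$ and the other part of size $\Theta(m)$.

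For the degree conditions (property~(3)) I would use two operations. \emph{Cleaning}: in a bipartite graph $H$ on $(X,Y)$, alternately delete any $x\in X$ of degree below $\tfrac{e(H)}{4|X|}$ and any $y\in Y$ of degree below $\tfrac{e(H)}{4|Y|}$, with the two thresholds frozen at their initial values; the $X$-deletions destroy in total fewer than $|X|\cdot\tfrac{e(H)}{4|X|}=\tfrac14e(H)$ edges and likewise for $Y$, so the process terminates at a nonempty subgraph retaining at least half the edges in which \emph{every} vertex has degree at least $\min\{\tfrac{e(H)}{4|X|},\tfrac{e(H)}{4|Y|}\}$; once the parts are balanced this is $\Theta(e(H)/m)=\omega(m^{\varepsilon})$, supplying $\delta$. \emph{Dyadic pigeonholing}: bucketing one side, say $X$, by the dyadic range of its degrees, one of the $O(\log n)$ buckets $X^{\ast}$ carries an $\Omega(1/\log n)$ fraction of the edges, and passing to $H[X^{\ast}\cup Y]$ changes no degree of any vertex of $X^{\ast}$, so all of $X^{\ast}$ then lies in a fixed factor-$2$ degree window. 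Interleaving rebalancing, cleaning, and dyadic pigeonholing should produce a balanced bipartite graph in which one side lies in a constant-factor window and every vertex has degree at least $\delta$; then the identity $\sum_{x}\deg(x)=\sum_{y}\deg(y)$ together with $|X|=\Theta(|Y|)$ forces the two sides' typical degrees to agree up to a constant, which upgrades this to a single window $[\delta,\Delta\delta]$ over all vertices, i.e.\ balanced $\Delta$-almost regularity.

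The hard part is precisely this interleaving; it is the regularization of Erd\H{o}s and Simonovits that the statement cites. The obstacle is that the operations interfere: a dyadic restriction on one side deletes vertices from the neighbourhoods of the other side and destroys that side's minimum-degree bound, while a cleaning step run afterwards can push the first side's degrees out of their window, so a naive round-robin need not terminate, and each round as described loses a factor $\log n$ in the edge count --- which is fatal, since $\Delta$ must be an absolute constant and a $\mathrm{polylog}$ loss would not in general preserve $\delta=\omega(m^{\varepsilon})$. The way around this is to make the cleaning steps \emph{gentle}: once a dyadic step has placed a side in a window $[d,2d)$, the next cleaning need only delete a tiny (say $\tfrac{1}{100}$) fraction of the edges to restore a minimum degree on the other side, and then a Markov-type count shows that a constant fraction of the dyadically chosen side retains its degree up to a further constant factor, so the window only widens by a bounded amount; iterating this while checking that the window widths contract quickly enough that a bounded number of rounds (depending on $\varepsilon$, not on $n$) suffices keeps the total multiplicative loss at an absolute constant. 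Verifying these quantitative estimates is where essentially all the work lies; the three reductions above are individually routine.
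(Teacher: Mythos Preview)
The paper does not give its own proof of this lemma; it is quoted in the preliminaries with references to Erd\H{o}s--Simonovits, Jiang, and Conlon--Janzer--Lee, so there is no in-paper argument to compare against. What you should be comparing your sketch to is the standard regularization proof in those references, and there the approach is genuinely different from yours.

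Your three building blocks (max-cut, cleaning, dyadic pigeonholing) are each fine in isolation, and you correctly isolate the real difficulty as the interference between them. But your proposed fix targets the wrong operation. You argue that \emph{cleaning} can be made gentle, deleting only a tiny fraction of the edges so that a Markov argument preserves the window on the already-bucketed side up to a constant. That is true, but cleaning was never the problem: the destructive step is the \emph{dyadic} restriction on the second side, which keeps only a $1/\log n$ fraction of that side's vertices and can therefore wipe out essentially all the edges incident to a typical vertex on the first side, collapsing its window entirely. Nothing in your outline prevents this, and once it happens you must re-bucket the first side, which in turn destroys the second side's window, and so on. Your claim that ``the window widths contract quickly enough that a bounded number of rounds suffices'' is precisely the assertion that needs proof, and the gentle-cleaning idea does not supply it. (Your side remark that a polylog loss ``would not in general preserve $\delta=\omega(m^{\varepsilon})$'' is also off: the hypothesis is $e(G)=\omega(n^{1+\varepsilon})$, which absorbs any fixed power of $\log n$; the danger is $\Delta\to\infty$, not edge loss.)

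The argument in the cited sources sidesteps dyadic pigeonholing altogether via a \emph{density increment}. One first cleans to get minimum degree at least a constant times the average degree $d$. If the maximum degree is at most $Kd$ for a suitable $K=K(\varepsilon)$, one is done. Otherwise a vertex of degree exceeding $Kd$ exists, and one passes to an explicit subgraph (built from its neighbourhood) whose ratio $e(G')/v(G')^{1+\varepsilon}$ is strictly larger than that of $G$ by a factor bounded away from~$1$. Since this ratio is at most $v(G')^{1-\varepsilon}$, the process terminates after $O_\varepsilon(1)$ rounds, which is what pins down $\Delta$ as an absolute constant. If you want to carry your approach through, you would need either a comparable potential that strictly improves at each round, or a direct two-round argument that handles both sides simultaneously; as written, the sketch does not close.
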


By definition, the extremal number of a graph $H$, expressed as $\text{ex}(n,H)$,
is the maximal number of edges an $n$-vertex graph $G$ can possibly contain, 
so that $H$ is not contained in $G$ as a subgraph. 
For $2\leq s\leq t$, write $K_{s,t}^{\text{sub}}$ to denote the $1$-subdivision of complete bipartite graph $K_{s,t}$. 
A relevant result for the extremal numbers is the following. 
\begin{lemma}[Theorem 1.8 in~\cite{conlon2021more}]\label{Kst'turan} Let $2\leq s\leq t$, 
then $\text{ex} (n, K_{s,t}^{\text{sub}})=O(n^{\frac 32-\frac{1}{2s}})$.
\end{lemma}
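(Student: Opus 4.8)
The plan is to prove the bound by the standard route for subdivision extremal numbers, following the argument of Conlon--Janzer--Lee (Theorem~1.8 of \cite{conlon2021more}): regularise the host, then locate $K_{s,t}^{\text{sub}}$ through a codegree analysis. Suppose for contradiction that $\text{ex}(n,K_{s,t}^{\text{sub}})$ is not $O(n^{3/2-1/(2s)})$; then along some sequence there are $K_{s,t}^{\text{sub}}$-free graphs with $e(G)=\omega(n^{1+\varepsilon})$ for $\varepsilon=\tfrac12-\tfrac1{2s}$. Applying Lemma~\ref{almost_regular} with this $\varepsilon$, it suffices to contradict the existence of a balanced $\Delta$-almost regular bipartite graph $G$ on parts $A\cup B$ with $|A|=m\to\infty$, $|B|=\Theta(m)$, common degree $\delta=\omega(m^{1/2-1/(2s)})$, and no copy of $K_{s,t}^{\text{sub}}$. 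The key numerical point is that $\delta^2=\omega(m^{1-1/s})$, so the total codegree $\sum_{\{u,v\}\subseteq A}|N(u)\cap N(v)|=\sum_{b\in B}\binom{\deg(b)}{2}=\Theta(m\delta^2)=\omega(m^{2-1/s})$; here $2-1/s$ is exactly the Kővári--Sós--Turán threshold for $K_{s,\cdot}$, which is where the exponent $3/2-1/(2s)$ comes from.

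I would place the $s+t$ branch vertices of $K_{s,t}^{\text{sub}}$ in $A$ and the $st$ subdivision vertices in $B$, so that finding the subdivision amounts to finding distinct $u_1,\dots,u_s\in A$ and at least $t$ further vertices $w\in A$ for which the sets $N(u_i)\cap N(w)$ $(i\in[s])$ admit a system of distinct representatives that can moreover be chosen disjointly over the $w$'s; Hall's theorem then produces the subdivision vertices. I would handle this via a dyadic decomposition of the pairs of $A$ by codegree scale, noting that for some scale $D$ the pairs with $|N(u)\cap N(v)|\in[D,2D)$ carry an $\Omega(1/\log\delta)$-fraction of the codegree mass $\Theta(m\delta^2)$. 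When $D$ is bounded (the "spread'' case) the codegree graph $P_1$ on $A$ --- with an edge whenever $|N(u)\cap N(v)|\ge 1$ --- has $e(P_1)=\omega(m^{2-1/s})$, so Kővári--Sós--Turán yields $u_1,\dots,u_s$ with a constant number (more than $t$) of common distance-two neighbours, and the residual obstructions to choosing subdivision vertices disjointly are controlled because almost-regularity forces the relevant common neighbourhoods to be dispersed, so a short pruning argument finishes. When $D$ is large (the "concentrated'' case) one has many pairs of codegree $\ge D$ with $D\ge st$; feeding this into the bipartite incidence between $A$ and the $D$-subsets of $B$, and using that $K_{s,t}^{\text{sub}}$-freeness forces the codegree graph to have bounded clique number (so every $D$-subset of $B$ has at most a constant number $c(s,t)$ of common neighbours in $A$), one extracts a $D$-subset of $B$ with $\ge c(s,t)$ common neighbours --- a complete bipartite $K_{c(s,t),D}$, which already contains $K_{s,t}^{\text{sub}}$.

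The hard part will be the "spread'' case and the calibration that makes the exponent $3/2-1/(2s)$ --- rather than a much weaker bound such as $2-1/(st+1)$ coming from the crude clique-number estimate alone --- actually emerge: one must show that a $K_{s,t}^{\text{sub}}$-free host with $e(G)=\omega(m^{3/2-1/(2s)})$ is excluded by the Kővári--Sós--Turán bound for $K_{s,\cdot}$ applied to the right auxiliary graph, and for this the disjointness of the $st$ subdivision vertices has to be secured \emph{without} demanding large codegrees (which fail in the sparse, nearly-$C_4$-free regime), using instead the dispersal estimates coming from almost-regularity together with careful bookkeeping across all codegree scales. This interpolation is the technical core of the argument of \cite{conlon2021more} that we are following.
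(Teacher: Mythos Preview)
The paper does not prove this lemma at all: it is quoted verbatim as Theorem~1.8 of \cite{conlon2021more} and, immediately afterwards, the authors even remark that ``We will not directly apply this result, while the connection between this theorem and our Theorem~\ref{f2_lowerbound} will become clear in Section~\ref{lower_bounds_Kst'}.'' So there is no ``paper's own proof'' to compare your proposal against; any proof you supply here is strictly beyond what the present paper does.

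As for the sketch itself, the overall shape --- regularise via Lemma~\ref{almost_regular}, pass to the distance-two/codegree graph on one side, and force a $K_{s,\cdot}$ there via K\H{o}v\'ari--S\'os--Tur\'an --- is indeed the skeleton of the Conlon--Janzer--Lee argument, and your identification of $\delta^2=\omega(m^{1-1/s})$ as the numerical hinge is correct. But your treatment of the ``concentrated'' case contains a genuine non-sequitur: you assert that $K_{s,t}^{\text{sub}}$-freeness forces the codegree graph on $A$ to have bounded clique number, and then parenthetically conclude that ``every $D$-subset of $B$ has at most a constant number $c(s,t)$ of common neighbours in $A$.'' These are statements about different objects and neither implies the other; a clique in the codegree graph only says that the pairwise common neighbourhoods are large, not that there is a single $D$-set in $B$ seen by many $A$-vertices, so the claimed $K_{c(s,t),D}$ does not fall out. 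In the actual argument of \cite{conlon2021more} the heavy/light split and the extraction of the subdivision vertices are handled by a more careful supersaturation/embedding step (tracking, for a fixed $s$-tuple, how many candidate $w$'s admit suitable systems of distinct representatives), not by a clique-number bound on the codegree graph. If you want to write out a proof here rather than cite, that step is the one to get right.
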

We will not directly apply this result, while the connection between this theorem and our Theorem~\ref{f2_lowerbound}
will become clear in Section~\ref{lower_bounds_Kst'}. See also Section~\ref{final} for more discussions. 

Next, we will need some basic facts about affine varieties over finite fields $\Bbb F_q$. 
Write $\overline {\Bbb F}_q$ to denote  the 
algebraic closure of $\Bbb F_q$ which is an infinite discrete field. 
Consider 
the $t$-dimensional vector spaces $\Bbb F_q^t$ and $\overline{\Bbb F}_q^t$.
An \textit{algebraic variety} 
$\mathcal W$ in $\overline {\Bbb F}_q^t$
over $\overline{\Bbb F}_q$ is defined as the zero set of $s$ polynomials $f_1,\cdots,f_s$ for some positive integer $s$, where each $f_i: \overline{\Bbb F}_q^t\to \Bbb F_q$ is a $t$-variable polynomial, and $s$ is a positive integer.  
We say $\mathcal W$ is \textit{defined} over 
$\Bbb F_q$ if in each of the defining polynomials, the coefficients are taken from $\Bbb F_q$. In this case, 
we can also consider the restricted variety 
$\mathcal W(\Bbb F_q)= \mathcal W\bigcap \Bbb F_q^t$.
We say $\mathcal W$ has \textit{complexity} $M$ if 
$s,t$ and the degrees of these $s$ polynomials are all bounded from above by the constant $M$.

\begin{lemma}\label{random_polynomial}[Lemma 2 in~\cite{Conlon2019graphs}]
Let  $d \geq m-1$, let $q$ be sufficiently large and write 
$\mathcal P_d$ to denote the collection of all the $t$-variable polynomials over the field $\Bbb F_q$ 
with degree at most $d$.
  Fix any $m$ distinct points $X_1,\cdots,X_m\in \Bbb F_q^t$. 
 If we choose $f\in \mathcal P_d$ uniformly at random, then  
  \begin{equation}
\textbf{Prob} \big[f(X_i)=0 \text{ for every } i=1,\cdots m\big] = \frac{1}{q^m}.
 \end{equation}
\end{lemma}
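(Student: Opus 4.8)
The plan is to exploit the linearity of polynomial evaluation. Consider the evaluation map $\Phi : \mathcal P_d \to \Bbb F_q^m$ given by $\Phi(f) = (f(X_1), \dots, f(X_m))$. The value of a polynomial at a fixed point is an $\Bbb F_q$-linear function of its coefficients, so $\Phi$ is a homomorphism of $\Bbb F_q$-vector spaces, and the event whose probability we want is exactly $\ker\Phi = \{ f \in \mathcal P_d : \Phi(f) = 0\}$. Every non-empty fibre of a linear map between finite-dimensional $\Bbb F_q$-vector spaces is a coset of the kernel and hence has cardinality $|\ker\Phi|$, so $|\mathcal P_d| = |\ker\Phi|\cdot|\operatorname{im}\Phi|$; since $f$ is chosen uniformly, the probability in question equals $|\ker\Phi|/|\mathcal P_d| = 1/|\operatorname{im}\Phi|$. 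It therefore suffices to show that $\Phi$ is surjective, i.e. $|\operatorname{im}\Phi| = q^m$.

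For surjectivity I would build explicit Lagrange-type interpolants. Fix $i \in \{1,\dots,m\}$. For each $j \ne i$, since $X_i \ne X_j$ there is a coordinate $k = k(i,j)$ with $(X_i)_k \ne (X_j)_k$; then the degree-one polynomial $\ell_{ij}(x) = (x_k - (X_j)_k)/((X_i)_k - (X_j)_k)$ satisfies $\ell_{ij}(X_i) = 1$ and $\ell_{ij}(X_j) = 0$. Put $g_i = \prod_{j \ne i} \ell_{ij}$. This is a polynomial of degree at most $m-1 \le d$, so $g_i \in \mathcal P_d$, and by construction $g_i(X_i) = 1$ while $g_i(X_j) = 0$ for all $j \ne i$; equivalently $\Phi(g_i)$ is the $i$-th standard basis vector of $\Bbb F_q^m$. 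As $i$ ranges over $\{1,\dots,m\}$ these vectors span $\Bbb F_q^m$, so $\Phi$ is onto and the claim follows. (The hypothesis that $q$ be large plays no role in this argument; only $d \ge m-1$ is used, precisely to ensure that each $g_i$ lies in $\mathcal P_d$.)

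There is no serious obstacle here; the one point requiring care is the degree bookkeeping, namely producing interpolants of degree at most $d$ — the product-of-affine-forms construction above is exactly what makes $m-1$, and hence the hypothesis $d \ge m-1$, suffice. One could instead argue that a generalized Vandermonde matrix formed from any $\binom{t+d}{t}$ monomials of degree at most $d$ evaluated at $m$ distinct points has rank $m$ whenever $d \ge m-1$, but the explicit construction is cleaner and keeps the proof self-contained.
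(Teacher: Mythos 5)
Your argument is correct: the evaluation map $\Phi(f)=(f(X_1),\dots,f(X_m))$ is $\Bbb F_q$-linear, your coordinate-wise Lagrange interpolants $g_i$ of degree at most $m-1\le d$ show it is surjective, and uniformity then gives probability $|\ker\Phi|/|\mathcal P_d|=1/q^m$. The paper itself offers no proof, quoting the lemma from the cited reference, and your argument is essentially the standard interpolation/linear-algebra proof given there; your side remark that the hypothesis ``$q$ sufficiently large'' is not actually needed (only $d\ge m-1$) is also accurate.
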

 The next lemma is a consequence of the classical 
 Lang-Weil bound (see~\cite{lang1954number}) which relates the cardinality of 
 a variety which is defined over the finite field $\Bbb F_q$ with the dimension of the variety. 
 We will even avoid the formal mention of the notion of dimension in this context. Instead, 
 the following statement was taken directly from~\cite{bukh2018rational}. 
 It was adopted to be easier 
 to use in the context of our problems.  
\begin{lemma}[cf. Lemma 2.7 in~\cite{bukh2018rational}]
\label{Langweil} Let $\mathcal W$ and $\mathcal W'$ be two varieties in the vector space 
$\overline{\Bbb F}_q^t$, over $\overline {\Bbb F}_q$, each of which has complexity at most $M$. Write 
$\mathcal W(\Bbb F_q) = \mathcal W\cap \Bbb F_q^t$ and 
$\mathcal W'(\Bbb F_q) = \mathcal W'\cap \Bbb F_q^t$.
Then there exists a constant $C=C(M)$ so that one of the following holds with $q$ being sufficiently large.
\begin{enumerate}
\item either $\mathcal W (\Bbb F_q)\backslash \mathcal W'(\Bbb F_q)$ 
has size at least $q/2$,
\item or its size is bounded by $C$.
\end{enumerate}
\end{lemma}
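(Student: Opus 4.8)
The plan is to deduce the statement from the Lang--Weil bound \cite{lang1954number} together with elementary dimension theory for varieties of bounded complexity; the argument proceeds by induction on $\dim\mathcal W$, and the only real work is the bookkeeping that keeps every constant a function of $M$ alone. Throughout I use the standard effective facts that a variety of complexity at most $M$ has at most $N(M)$ absolutely irreducible components, each of degree at most $D(M)$ and hence of complexity at most some $M'(M)$; that intersections and unions of boundedly many such varieties again have complexity bounded in terms of $M$; and that a proper Zariski-closed subset of an absolutely irreducible variety has strictly smaller dimension.

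We may assume $\mathcal W$ and $\mathcal W'$ are defined over $\Bbb F_q$: in the applications they arise as zero sets of $\Bbb F_q$-polynomials, and in general one replaces $\mathcal W$ by the Frobenius-stable subvariety $\bigcap_{j\geq 0}\phi^j(\mathcal W)$, where $\phi\colon x\mapsto x^q$; this has exactly the same $\Bbb F_q$-points and, the intersection stabilising after boundedly many steps, still has complexity bounded in terms of $M$. I would then decompose $\mathcal W$ into its absolutely irreducible components and sort them into $\phi$-orbits. Any $x\in\mathcal W(\Bbb F_q)\backslash\mathcal W'(\Bbb F_q)$ is $\phi$-fixed, hence lies on every component in the orbit of any component containing it; so either $x$ lies on a single $\Bbb F_q$-rational component $\mathcal W_i$ (an orbit of size one), which then cannot be contained in $\mathcal W'$, or $x$ lies on the intersection of two distinct conjugate components, which is a subvariety of $\mathcal W$ of dimension at most $\dim\mathcal W-1$. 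Letting $R$ be the union of the $\Bbb F_q$-rational components of $\mathcal W$ not contained in $\mathcal W'$, and $\mathcal Y$ the ($\Bbb F_q$-definable, bounded-complexity) union of all pairwise intersections of distinct conjugate components, this yields $\mathcal W(\Bbb F_q)\backslash\mathcal W'(\Bbb F_q)=\big(R(\Bbb F_q)\backslash\mathcal W'(\Bbb F_q)\big)\cup\big(\mathcal Y(\Bbb F_q)\backslash\mathcal W'(\Bbb F_q)\big)$.

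Now I would bound the two pieces. If a component $\mathcal W_i\subseteq R$ has dimension $e_i\geq 1$, then $\mathcal W_i\cap\mathcal W'$ is a proper closed subset of the absolutely irreducible $\Bbb F_q$-variety $\mathcal W_i$, so it has dimension at most $e_i-1$; the Lang--Weil lower bound $|\mathcal W_i(\Bbb F_q)|\geq q^{e_i}-O_M(q^{e_i-1/2})$ and the trivial upper bound $|(\mathcal W_i\cap\mathcal W')(\Bbb F_q)|=O_M(q^{e_i-1})$ then give $|\mathcal W_i(\Bbb F_q)\backslash\mathcal W'(\Bbb F_q)|\geq q^{e_i}/2\geq q/2$ once $q$ is large. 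A component $\mathcal W_i\subseteq R$ of dimension $0$ is a single $\Bbb F_q$-point and contributes exactly $1$. Since $\dim\mathcal Y\leq\dim\mathcal W-1$ and $\mathcal Y$ has complexity bounded in terms of $M$, the inductive hypothesis applied to the pair $(\mathcal Y,\mathcal W')$ gives that $|\mathcal Y(\Bbb F_q)\backslash\mathcal W'(\Bbb F_q)|$ is either at least $q/2$ or bounded by a constant depending only on $M$. Assembling the pieces: if some rational component in $R$ has positive dimension, or the $\mathcal Y$-part is large, then $|\mathcal W(\Bbb F_q)\backslash\mathcal W'(\Bbb F_q)|\geq q/2$ and alternative (1) holds; otherwise $R$ contributes at most $N(M)$ points and $\mathcal Y$ contributes a bounded amount, so alternative (2) holds with a suitable $C=C(M)$. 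The base case $\dim\mathcal W\leq 0$ is immediate, since then $|\mathcal W(\Bbb F_q)|\leq N(M)$.

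The main point requiring care --- and the reason this is not a one-line citation of Lang--Weil --- is the interaction between $\Bbb F_q$-rationality and absolute irreducibility: the clean estimate $|\mathcal X(\Bbb F_q)|=q^{\dim\mathcal X}+O(q^{\dim\mathcal X-1/2})$ needs $\mathcal X$ to be \emph{simultaneously} defined over $\Bbb F_q$ and geometrically irreducible, whereas the components of $\mathcal W$ need be neither. The Frobenius-orbit and induction device above is exactly what resolves this, by peeling off the rational components and pushing everything else into a variety of strictly smaller dimension; verifying that these operations inflate the complexity, and hence all constants, only by a function of $M$ is routine effective algebraic geometry, and is the substance of the reformulation recorded as \cite[Lemma~2.7]{bukh2018rational}.
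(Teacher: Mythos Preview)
The paper does not give its own proof of this lemma: it is quoted verbatim as a blackbox from \cite[Lemma~2.7]{bukh2018rational}, with only the remark that it is a consequence of the classical Lang--Weil bound. So there is nothing on the paper's side to compare against.

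Your argument is a correct derivation of the statement from Lang--Weil, and is essentially the standard one underlying the cited reference. The reduction to $\Bbb F_q$-definable $\mathcal W$ via $\bigcap_{j\geq 0}\phi^j(\mathcal W)$ is sound: the spans $\operatorname{span}\{f_i^{(\phi^j)}:j\leq n\}$ form an ascending chain inside the space of degree-$\leq M$ polynomials in $t\leq M$ variables, so they stabilise after at most $\binom{M+t}{t}$ steps, and once they do the ideals (hence the varieties $V_n$) stabilise as well --- so ``boundedly many steps'' is indeed bounded in $M$ alone. The decomposition into $\Bbb F_q$-rational components versus the lower-dimensional locus $\mathcal Y$ carrying the non-rational Frobenius orbits, followed by Lang--Weil on a positive-dimensional rational component not contained in $\mathcal W'$ and induction on $\mathcal Y$, is exactly the mechanism that produces the dichotomy. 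One small point worth making explicit: it is cleaner to take $\mathcal Y$ to be the union of \emph{all} pairwise intersections $W_i\cap W_j$ with $i\neq j$ (not just conjugate pairs), so that an $\Bbb F_q$-point lying on two distinct rational components is also absorbed into $\mathcal Y$; this set is still $\Bbb F_q$-definable, of bounded complexity, and of dimension at most $\dim\mathcal W-1$.
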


\section{Upper Bounds}
This section is mainly devoted to the proof of Theorem~\ref{main}. At the end we will also deduce Corollary~\ref{2a<b} from Theorem~\ref{main}. 

Recall that we are given a balanced tree $T$, with $r$ roots, a unrooted vertices and $b$ edges. 
Let us take a large prime number $q$, and consider the finite field $\Bbb F_q$. 
We will define a complete bipartite graph $G$ 
with randomly assigned colours to its edges via random polynomials. 
Firstly, let $G$ be the complete bipartite graph on the bipartition $U\bigcup V$, where $U$ and $V$ are two copies of $\Bbb F_q^b$. 
Fix a positive integer $d$ which is larger than $2rb^2+b$ 
(in order to apply Lemma~\ref{random_polynomial} later), 
and look at polynomials on $2b$-variables over the field $\Bbb F_q$, 
namely $(X_1,\cdots, X_b, Y_1,\cdots, Y_b)$,  and of degree at most $d$. 
Write $\mathcal P_d$ to denote the collection of all such polynomials. 

Take a random function  
$F=(f_1,\cdots,f_{2a}): \Bbb F_q^b\times F_q^b \to \Bbb F_q^{2a}$, 
where for each $i=1,\cdots,2a$, the function 
$f_i : \Bbb F_q^b\times \Bbb F_q^b \to \Bbb F_q$ is simply a polynomial chosen uniformly at random from $\mathcal P_d$. 
Therefore $F$ possibly takes $q^{2a}$ distinct values, which will be the set of colours we use. 
So we can think of $G$ as the complete bipartite graph whose edge are randomly coloured via these random polynomials. 

We need to induce this colouring back to a colouring of the edge set of the complete graph 
$K_n$ of size $n=q^b$. For this, we can identify $V(K_n)= [n]=\{1,\cdots, n\}$ with 
$\Bbb F_q^b$ with an arbitrary bijection $\phi: [n]\to \Bbb F_q^b$. 
In this way, $\Bbb F_q^b$ is endowed with an induced order. 
Now, for any edge $(i,j) \in E(K_n)$, 
 we map the edge $(i,j)$ to an edge of the bipartite graph $G$ so that with respect to the above natural order, 
 the smaller vertex is mapped into $U$ and the larger vertex is mapped into $V$.
More precisely, for any $(i,j)\in E(K_n)$,
\begin{equation}
\Phi((i,j)) = (\phi(\min\{i,j\} ),\phi( \max\{i,j\})) \in U\times V.
\end{equation} 

Clearly, $\Phi: E(K_n) \to E(G)$ is an injective map. 
Now the random coloured complete bipartite graph $G$ induces 
a colouring for $E(K_n)$ with $q^{2a}$ colours by assigning each edge $(i,j)$
the colour $F\circ \Phi\big( (i,j) \big)$.

Consider a pair $(T_1,T_2)$ of vertex disjoint labelled copies of $T$, which comes with 
a one-to-one correspondence between their edges.
Suppose also $(T_1, T_2)$ is rooted at the pair of disjoint vertex subsets $(R_1, R_2)$. 
Next, we consider $p$ such pairs, 
namely $(T_1^{(1)},T_2^{(1)}), \cdots, (T_1^{(p)},T_2^{(p)})$, each of which is 
rooted at the same pair $(R_1,R_2)$. 
Write $H_1=\bigcup_{j=1}^p T_1^{(j)}$ and $H_2=\bigcup_{j=1}^p T_2^{(j)}$. 
In other words, $(H_1,H_2)$ is a pair of graphs, obtained as follows. 
For the $p$ pairs as above, the graph $H_1$ is the union of $p$ copies of $T$ which are the first one from each of the pairs. Similarly, $H_2$ is the union of the $p$ copies of $T$ which are the second one from each 
pair. Clearly, $R_1\subset V(H_1), R_2\subset V(H_2)$. Furthermore,
these is an edge correspondence for $(H_1,H_2)$ induced from the $p$ pairs above. 
Caution that $H_1$ and $H_2$ can possible intersect at certain edges.

Now we explain with more details the edge correspondence for the pair $(H_1,H_2)$.
For any $j=1,\cdots, p$, 
let us list the edges of $T_1^{(j)}$ as $\{e_1,\cdots,e_b\}$ and 
list the edges of $T_2^{(j)}$ as $\{e_1',\cdots,e_b'\}$, 
where each pair $(e_\ell, e_\ell')$ can be mapped from one another via the natural identification, since they are two labelled 
copies of the same tree $T$. 
Below we say $(e_\ell, e_\ell')$ forms a corresponding pair.
Note that,
 in order to compute the probability that each of these pairs forms an colour isomorphic pair (or a monochromatic $2$-matching), 
we will need the following $b$ equations to be satisfied in the future. 
\begin{equation}\label{equations_constraints}
F(\Phi(e_\ell))-F(\Phi(e_\ell'))=0, \ell=1,\cdots, b.
\end{equation}
Therefore, we gather all these equations for $j=1,\cdots,p$, and for $\ell=1,\cdots,b$. 
They form a system of $bp$ equations. 
However, this system of equations might use certain edges for multiple times. 
In particular certain equations might be redundant. In order to compute the probability that all these $p$ pairs of $T$ copies to be colour isomorphic pairs, 
we need to understand in this system, essentially one needs to check how many polynomial equations to hold. 

In other words, for all the edges, we can think that 
being in a corresponding pair is an equivalence relation among edges, 
and the above paragraph means one can write down a collection of $bp$ equivalent pairs. 
We will be interested in 
the least number of these pairs which already contains all the information about this equivalence relation. 

To sum up, for a pair of graphs  $(H_1,H_2)$,
which is seen as the union of $p$ distinct pairs of labelled copies of $T$ sharing the same 
vertex disjoint pair of roots $(R_1,R_2)$, 
we obtain a natural $EC$ (short for \textit{edge correspondence})
between the two edge sets of these two graphs.
More precisely, one can find certain collection  of pairs $(e,e') \in H_1\times H_2$, 
which all together  indicate that in the future, 
each pair of edges must be assigned with the same colour.
Note in such a correspondence, each edge in each graph must show up at least once. 

Now we are ready to make the following important definition. 
\begin{definition}\label{constraint_number}
Consider a pair of graphs $(H_1,H_2)$ rooted at vertex disjoint pair of roots $(R_1,R_2)$, 
 with a specified edge correspondence $\text{EC}$ between their edge sets.
 Define its \textit{constraint number}, written as $k=k(H_1,H_2, 
 \text{EC})$\footnote{Here we omit the dependence on the root pair $(R_1,R_2)$, because usually the roots are clear to see from the context.},
 to be the smallest number of pairs needed to express all the corresponding relations given by $\text{EC}$.
\end{definition}
Translating back to the language of polynomial equations, when $\text{EC}$ is obtained from the $pb$ edge equivalence relations, $k(H_1,H_2, \text{EC})$ is simply the smallest integer $k$ so that, in the  
$bp$ equations described in (\ref{equations_constraints}), 
there exists a choice of $k$ equations therein which already determines the whole system.

For example, if $p=1$, 
then $(H_1,H_2)= (T_1,T_2)$ is a pair of vertex disjoint labelled copies of $T$.
In this case, the edge correspondence is very clear so we can omit it. 
It follows easily that 
$k(T_1,T_2)=b$ which is the number of edges in $T$. 
A bit more generally, if the two graphs $H_1$ and $H_2$ 
do not share any common edge, then for any edge correspondence $EC$ which is induced by 
$p$ pairs of vertex disjoint labelled copies of $T$,
it is not hard to see that
$k(H_1, H_2, \text{EC})\geq \max \{e(H_1),e(H_2)\}$, because in any set of equations which
can express all the edge equivalence relations, 
all the edges from $H_1$ or $H_2$ must appear at least once. 

See Figure~\ref{Figure} for some illustrations. In both examples, $T$ is a path of length $4$, which is rooted at two leaves, and $p=2$.
In the example depicted in the first line, both $H_1$ and $H_1$ are given by the union of two copies of $P_5$ which are vertex disjoin except at the roots. In this case, we have 
$k(H_1,H_2,\text{EC})=10=e(H_1)=e(H_2)$. 
Specifically, the $10$ edge equivalence relations are simply
	 $e_i \sim f_i$, $i=1,\ldots, 10$.
		In the example depicted in the second line, 
		$H_1$ is the union of two copies of $P_5$, namely, $e_1e_2e_3e_4e_5$ and $e_6e_4e_3e_2e_7$.
		$H_2$ is the union of two copies of $P_5$, namely, $f_1f_2f_3f_4f_5$ and $f_1f_6f_3f_7f_5$. 
		Here, $k(H_1,H_2,\text{EC})= 9>e(H_1)=e(H_2)$. The equivalence relations are as follows. 
		$e_i=f_i$, $i=1,\ldots, 5$, 
		$e_6=f_1$, $e_4=f_6$, $e_2=f_7$, $e_7=f_5$. 
\begin{figure}[h]\label{Figure}
	\centering
	\includegraphics[width=0.8\textwidth]{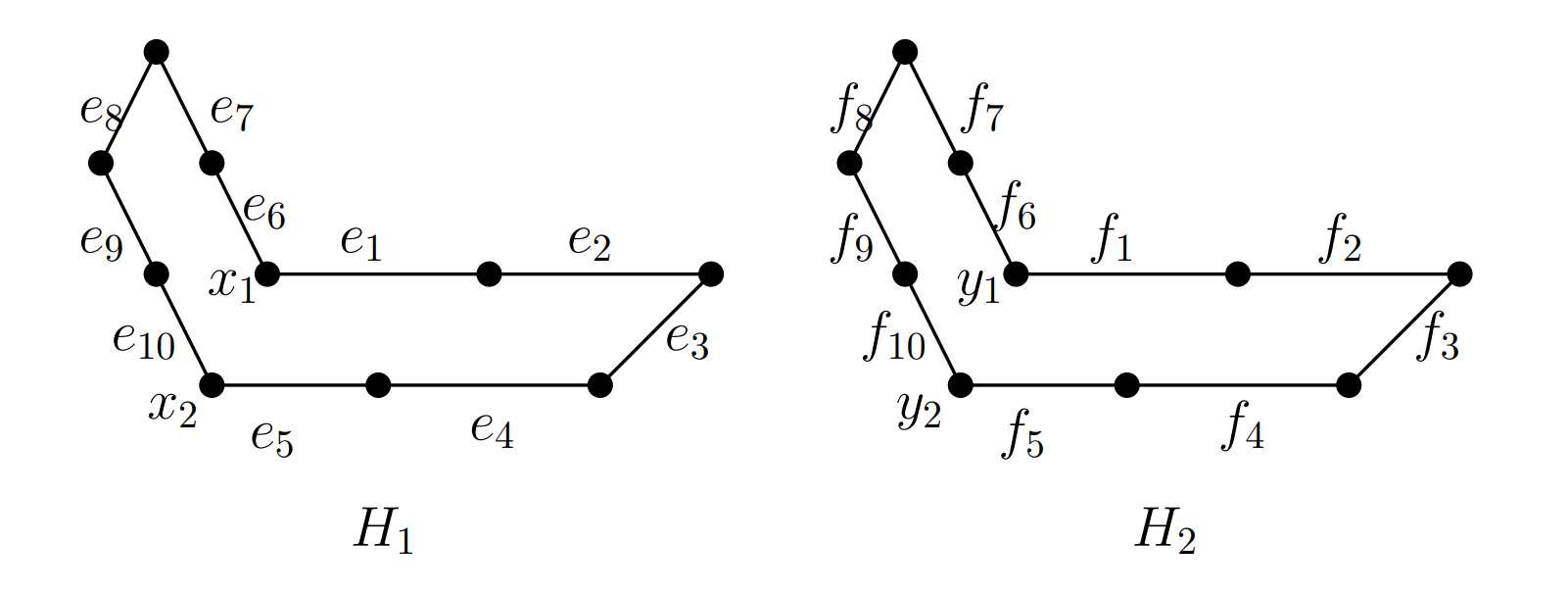}
	\includegraphics[width=0.8\textwidth]{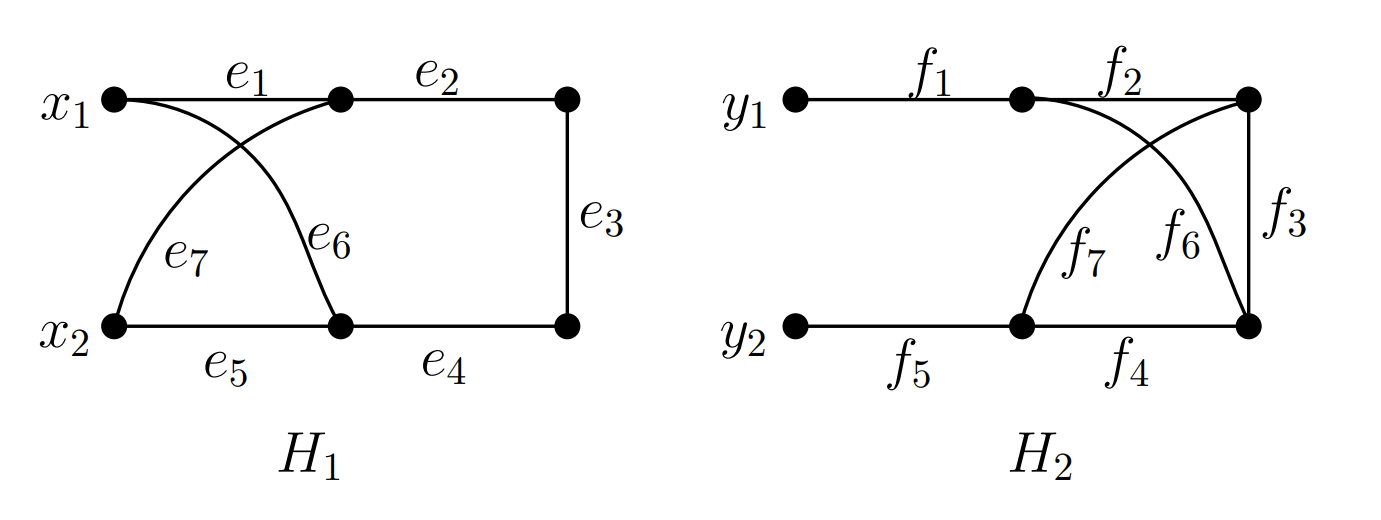}
	\caption{The example depicted in the first line
is non-degenerate, while the example depicted in the second line is degenerate. 
	}\label{figure}
\end{figure}

The next lemma is crucial in estimating higher moments 
of certain random variables later. 
\begin{lemma}\label{balance_two} 
 Let $p$ be a positive integer.  
  Suppose the edge correspondence $\text{EC}$ for $(H_1,H_2)$ 
 is induced from $p$ pairs of vertex disjoint copies of $T$, each of which 
 is rooted at $(X,Y)$. 
 Then the following hold.
\begin{align}\label{balance_for_two} 
                     2k \big(H_1,H_2,\text{EC} \big) \geq   \ \ 
                 &  \rho_T \cdot \big(v(H_1\bigcup H_2) -2r \big) \\
          =   \ \   &  \frac ba  \cdot \big(v(H_1 \bigcup H_2) -2r \big). \nonumber
\end{align}
\end{lemma}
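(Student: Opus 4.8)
The plan is to prove the inequality by decomposing the vertex set of $H_1 \cup H_2$ according to which copies of $T$ the vertices come from, and then applying the balancedness hypothesis on $T$ copy by copy. First I would set up notation: write $W = V(H_1 \cup H_2)$, so $|W| = v(H_1 \cup H_2)$, and note that $W$ contains the $2r$ root vertices (the roots of $H_1$ are at $X$, the roots of $H_2$ at $Y$, and $X \cap Y = \emptyset$ since the pairs are vertex disjoint). For each of the $p$ pairs $(T_1^{(j)}, T_2^{(j)})$, the copy $T_1^{(j)}$ contributes $a$ unrooted vertices and the copy $T_2^{(j)}$ contributes $a$ unrooted vertices; the unrooted vertices coming from different pairs are pairwise disjoint by construction, so $|W| - 2r = 2ap$. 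Then the right-hand side of the claimed inequality equals $\frac{b}{a}(2ap) = 2bp$, and since trivially $k(H_1,H_2,\text{EC}) \le bp$ (the $bp$ equations of \eqref{equations_constraints} always express the whole relation), the inequality $2k \ge 2bp$ would need the matching bound $k \ge bp$, which is \emph{false} in general (the second example in Figure~\ref{figure} has $k = 9 < bp = 10$). So the naive counting is not enough, and the real content is that when $k$ drops below $bp$, the vertex set must shrink correspondingly — i.e. redundancy in the edge equations forces overlaps between copies, which reduces $v(H_1 \cup H_2)$.

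The key step, then, is to choose a minimal set of $k$ corresponding pairs $\mathcal{E} = \{(e_1,e_1'), \dots, (e_k,e_k')\}$ realizing the constraint number, and to bound $v(H_1 \cup H_2)$ in terms of $k$ via these $k$ pairs. I would consider the auxiliary graph (or rather, two graphs) $F_1$ = the subgraph of $H_1$ consisting of the $k$ edges $e_1, \dots, e_k$ and $F_2$ = the subgraph of $H_2$ consisting of $e_1', \dots, e_k'$. Because $\mathcal{E}$ expresses \emph{all} the edge-equivalence relations, every edge of $H_1$ and every edge of $H_2$ is forced to equal (share a color-constraint chain with) one of the $e_\ell$ or $e_\ell'$; I want to argue that in fact $F_1 = H_1$ and $F_2 = H_2$ as \emph{edge sets} — no, more carefully, that the $k$ edges together touch all vertices of $H_1 \cup H_2$ that are not isolated. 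Actually the cleaner route: each edge $e$ of $H_1 \cup H_2$ lies in some $T_s^{(j)}$, and the minimal expressing set must, restricted to each copy $T_s^{(j)}$, still "see" enough edges of that copy. I would apply balancedness locally: for each copy $T_s^{(j)}$, let $S_{s,j} \subseteq V(T_s^{(j)}) \setminus (\text{roots})$ be its set of unrooted vertices that are incident \emph{only} to edges not in $\mathcal{E}$ ... hmm, this needs care. The honest approach is: let $S$ = set of unrooted vertices of $H_1 \cup H_2$; for each such vertex, it belongs to a unique copy $T_s^{(j)}$ (unrooted vertices of distinct copies are disjoint). Partition the $k$ pairs among the copies by which copy $e_\ell$ (resp. $e_\ell'$) belongs to, double-counting: $\sum_{j,s} k_{s,j} \ge k$ where $k_{s,j}$ counts pairs with an endpoint-edge in $T_s^{(j)}$ (this may double count a pair $(e_\ell,e_\ell')$ if $e_\ell \in T_1^{(j)}$ and $e_\ell' \in T_2^{(j')}$, giving a factor at most $2$, which is exactly where the $2k$ comes from). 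Within a single copy $T_s^{(j)}$, the edges appearing among the $e_\ell$'s or $e_\ell'$'s must include, for each unrooted vertex $v$ of $T_s^{(j)}$ that is "new" (not identified with anything outside), at least the edges needed to pin down its color-constraint class; and since $T$ is balanced, the number of such edges is at least $\rho_T$ times the number of such vertices in that copy.

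The main obstacle, which I would spend the bulk of the argument on, is making precise the claim that "a minimal expressing set $\mathcal{E}$, restricted to the edges of one copy $T_s^{(j)}$, contains at least $\rho_T \cdot |S_{s,j}|$ edges," where $S_{s,j}$ is the appropriate set of unrooted vertices of that copy contributing to $v(H_1 \cup H_2)$. The subtlety is that an edge of $T_s^{(j)}$ might be "covered" by $\mathcal{E}$ not through an equation involving that edge directly, but through a chain: $e \sim f \sim g$ with only $e \sim f$ and $f \sim g$ in $\mathcal{E}$ forcing $e \sim g$ — so $g$ is determined without appearing. I would handle this by building, for the equivalence relation on edges, a spanning-forest argument: the $k$ pairs form a graph on the set of edge-classes whose connected components are the color classes, and $k \ge (\#\text{edges of } H_1\cup H_2) - (\#\text{color classes among them})$; combining with the fact that within each copy the balancedness gives a lower bound on edges-minus-identifications. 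Concretely I'd show $k \ge e(H_1 \cup H_2 \text{ counted with copy-multiplicity}) - (\text{number of identifications}) $ and bound the latter, then invoke $2k \ge \sum_{s,j} (\text{edges of } T_s^{(j)} \text{ not saved by identification}) \ge \sum_{s,j} \rho_T |S_{s,j}| = \rho_T(|W| - 2r)$. The bookkeeping of which vertex is counted in which copy, and ensuring identified vertices are not double-counted in $|W|$, is the delicate part; the balancedness of $T$ is exactly the tool that converts the local edge count into the local unrooted-vertex count, and the factor $2$ on the left is precisely the price of a pair $(e,e')$ straddling an $H_1$-copy and an $H_2$-copy.
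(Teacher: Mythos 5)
Your proposal correctly identifies where the difficulty lives (redundancy in the equations must be paid for by vertex overlaps), but it stops exactly at the point where the lemma actually has to be proved. The step you yourself label ``the main obstacle'' --- that a minimal expressing set $\mathcal{E}$, restricted to a single copy $T_s^{(j)}$, contains at least $\rho_T\cdot|S_{s,j}|$ pairs for an ``appropriate'' set $S_{s,j}$ with $\sum_{s,j}|S_{s,j}|=v(H_1\cup H_2)-2r$ --- is never established, and as stated it is not even well-posed: if $S_{s,j}$ is taken to be the vertices exclusive to that copy, the shared unrooted vertices are not counted by anybody, and if shared vertices are assigned to copies by fiat, the claim that all edges incident to them must appear in $\mathcal{E}$ \emph{within that copy} is no longer justified by your ``every class must be pinned down'' argument. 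Several auxiliary assertions are also off: the unrooted vertices of different pairs are \emph{not} disjoint by construction (that is precisely the degenerate case you then invoke), and the double count is written as $\sum_{j,s}k_{s,j}\geq k$ when what your plan needs is the upper bound $\sum_{j,s}k_{s,j}\leq 2k$ coming from each pair lying in at most one $H_1$-copy slot and one $H_2$-copy slot. The spanning-forest remark ($k\geq \#\text{edges}-\#\text{classes}$) is a reasonable ingredient, but you never connect the number of classes or identifications to $v(H_1\cup H_2)$, which is the whole content of the inequality; so as written this is a plan with the key lemma missing, not a proof.

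For comparison, the paper proves the lemma by induction on $p$, one pair at a time: writing $H_1=H_1'\cup T_1$, $H_2=H_2'\cup T_2$ and $S_i=V(T_i)\setminus V(H_1'\cup H_2')$, balancedness gives $e(S_1)+e(S_2)\geq\rho_T(|S_1|+|S_2|)$, and since every edge of $T_1$ (resp.\ $T_2$) meeting $S_1$ (resp.\ $S_2$) is new and each equation can contain at most one such edge on each side, one gets $k(H_1,H_2,\text{EC})\geq k(H_1',H_2',\text{EC})+\max\{e(S_1),e(S_2)\}$, which telescopes to the claim; the factor $2$ enters through $2\max\{e(S_1),e(S_2)\}\geq e(S_1)+e(S_2)$, not through your copy-assignment double count. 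If you want to keep a global (non-inductive) flavour, a cleaner route than the one you sketch is: every edge of $H_1\cup H_2$ must appear in a minimal expressing set (it is non-trivially related to its partner), and each pair of $\mathcal{E}$ involves exactly two distinct edges, so $2k\geq e(H_1\cup H_2)$; it then remains to show $e(H_1\cup H_2)\geq\rho_T\bigl(v(H_1\cup H_2)-2r\bigr)$, which again requires an induction over the copies using balancedness on the newly added vertices --- i.e.\ essentially the paper's argument. Either way, the inductive use of balancedness on the ``new'' vertex sets is the missing idea you would need to supply.
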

\begin{proof} 
The proof goes by induction. By assumption,
 $X$ and $Y$ are two disjoint $r$-subsets of $V(K_n)$ 
 which are simply two labelled copies of the root set $R$ of $T$.
Let us first check the base case $p=1$.
In this case, the pair $(H_1,H_2)=(T_1,T_2)$ and $\text{EC}$ 
must come from the natural correspondence between 
edges of $T_1$ and $T_2$. 
So $k(T_1,T_2,\text{EC})=b$, $v(T_1\bigcup T_2)=2a+2r$ and therefore $(\ref{balance_for_two})$ holds with equality. 

Inductively, we now suppose the conclusion holds for the integer $p-1$. 
We can think of $(H_1, H_2)$ as formed by 
a pair $(H_1',H_2')$ which is formed by $p-1$ pairs of vertex disjoint labelled copies of $T$, 
together with another pair $(T_1,T_2)$ of vertex disjoint copies of $T$, rooted at the pair $(X,Y)$, 
so that $H_1=H_1'\bigcup T_1$ and $H_2=H_2'\bigcup T_2$.
Now we write $S_1=V(T_1) \backslash V(H_1'\bigcup H_2')$ and 
$S_2=V(T_2) \backslash V(H_1' \bigcup H_2')$. 
Then, recalling Definition~\ref{density_definition}, 
we can write $e(S_1)$ (respectively, $e(S_2)$) to denote the number of 
edges in $T_1$ (respectively, in $T_2$) which has at least one endpoint belonging to $S_1$ (respectively, $S_2$).
Then, by the assumption that $T$ is a balanced rooted tree, we have that, 
\begin{equation}\label{new_constraints}
2\max\{e(S_1),e(S_2)\} \geq e(S_1)+e(S_2) \geq \rho_T (|S_1| +|S_2|).
\end{equation}

By induction hypothesis, we have 
\begin{equation}\label{H_astestimate}
2k(H_1',H_2', \text{EC}) 
\geq \rho_T   \big(v(H_1' \bigcup H_2')  -2r \big)
\end{equation}
From the pair $(H_1', H_2')$ to the pair $(H_1,H_2)$, for the edge correspondences,
at least $\max\{e(S_1),e(S_2)\}$ new constraints are added so that 
the system of $bp$ equations (\ref{equations_constraints}) to be satisfied. 
Equivalently, 
\begin{equation}\label{k_increment}
k(H_1,H_2,\text{EC}) \geq k(H_1',H_2',\text{EC}) +\max\{e(S_1),e(S_2)\}
\end{equation}

Therefore, (\ref{balance_for_two}) follows by the combination of (\ref{new_constraints}), (\ref{H_astestimate}) and 
(\ref{k_increment}).
\end{proof}

\begin{prop}\label{no_color_iso_pair}
With probability converging to $1$ as $q$ tends to infinity, 
the random colouring obtained for $E(K_{q^b})$ via the random polynomial $F$ on $G$
contains no two vertex disjoint and colour isomorphic copies of $T^{K_0}$'s.
\end{prop}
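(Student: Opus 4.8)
The plan is to reduce the statement to a uniform bound on pairs of root images and then to control that bound by a moment computation resting on Lemma~\ref{balance_two}. For an ordered pair $(X,Y)$ of disjoint $r$-subsets of $V(K_{q^b})$, let $N_{X,Y}$ denote the number of vertex disjoint colour isomorphic pairs $(S,S')$ of copies of $T$ with $S$ rooted at $X$ and $S'$ rooted at $Y$. I claim it suffices to show that, with probability tending to $1$, $N_{X,Y}\le C^{\ast}$ for every such $(X,Y)$, where $C^{\ast}=C^{\ast}(T)$ is an absolute constant; one then takes $K_0>\max\{C^{\ast},\Delta(T)\}$. Indeed, any colour isomorphic pair of copies of $T^{K_0}$ carries a colour isomorphism $\psi$ between them, and since every non-root vertex of $T^{K_0}$ has degree at most $\Delta(T)<K_0$ while every root has degree at least $K_0$, the map $\psi$ must send the root set of the first copy onto that of the second; calling the two (disjoint) images $X$ and $Y$ and restricting $\psi$ to the $K_0$ branches produces $K_0$ pairwise distinct colour isomorphic vertex disjoint pairs of copies of $T$ rooted at $(X,Y)$, whence $N_{X,Y}\ge K_0>C^{\ast}$, a contradiction.

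To bound $N_{X,Y}$ I would first put it into algebraic form. After partitioning the $2a$ free coordinates of $(S,S')$ in $(\mathbb F_q^b)^{2a}$ according to the relative order (in the bijection $\phi$) of the at most $2(a+r)$ vertices involved, and according to the bounded data of which labelled copies and which rooted colour isomorphism are used — finitely many cases in all — the map $\Phi$ becomes a fixed assignment of coordinates, so on each piece the set of colour isomorphic pairs $(S,S')$ rooted at $(X,Y)$ is exactly the $\mathbb F_q$-points of a variety cut out by $F(\Phi(e_\ell))=F(\Phi(e'_\ell))$, $\ell=1,\dots,b$, while requiring $S,S'$ to be genuine vertex disjoint copies removes the $\mathbb F_q$-points of a further variety; all of these have complexity bounded in terms of $T$ and $d$ only. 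Lemma~\ref{Langweil} then yields the dichotomy $N_{X,Y}\le C^{\ast}$ or $N_{X,Y}\ge q/2$, so it remains to prove $\textbf{Prob}[\,\exists (X,Y):N_{X,Y}\ge q/2\,]\to 0$. Since there are at most $q^{2rb}$ pairs $(X,Y)$, a union bound together with Markov's inequality applied to a fixed integer power $m>2rb$ reduces this to the estimate $\mathbb E[N_{X,Y}^{m}]=O_{T,m}(1)$, uniformly in $(X,Y)$.

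Finally I would establish $\mathbb E[N_{X,Y}^{m}]=O_{T,m}(1)$ using Lemma~\ref{balance_two}. Expanding, $\mathbb E[N_{X,Y}^{m}]$ is a sum over the $O_{T,m}(1)$ isomorphism types of a pair $(H_1,H_2)$ obtained as a union of $m$ vertex disjoint pairs of copies of $T$ all rooted at $(X,Y)$, equipped with its induced edge correspondence $\text{EC}$, of the expected number of embeddings of that type in which all the colour equations hold. For a fixed type there are $O\big(n^{\,v(H_1\cup H_2)-2r}\big)=O\big(q^{\,b(v(H_1\cup H_2)-2r)}\big)$ embeddings with the roots pinned to $(X,Y)$, and the probability that a fixed such embedding is colour isomorphic equals $q^{-2a\,k(H_1,H_2,\text{EC})}$: the $2a$ defining polynomials of $F$ are independent and uniform in $\mathcal P_d$, and by the definition of the constraint number together with Lemma~\ref{random_polynomial} — this is where the choice of $d$ large enough is used, so that the relevant evaluation functionals are linearly independent — each of them satisfies all of its constraints with probability $q^{-k(H_1,H_2,\text{EC})}$. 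By Lemma~\ref{balance_two} we have $2a\,k(H_1,H_2,\text{EC})\ge b\,(v(H_1\cup H_2)-2r)$, so the two powers of $q$ cancel and every type contributes $O(1)$; summing over the finitely many types gives the claim. The technical heart of the argument, and the step I expect to be the main obstacle, is precisely this moment estimate: one must organise the edge correspondences and constraint numbers coherently across all the ways in which the $m$ tree-pairs can overlap, and apply the balancedness input of Lemma~\ref{balance_two} uniformly over them. Once that is in place, the Lang--Weil dichotomy and the union bound are routine.
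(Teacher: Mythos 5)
Your overall architecture coincides with the paper's: reduce to a uniform constant bound on the number $N_{X,Y}$ of colour isomorphic vertex disjoint pairs of copies of $T$ rooted at a fixed pair $(X,Y)$ (your degree argument $K_0>\Delta(T)$ forcing roots to roots is in fact spelled out more carefully than in the paper), then bound a high moment of this count by summing over types $(H_1,H_2,\mathrm{EC})$ with $O\bigl(q^{b(v(H_1\cup H_2)-2r)}\bigr)$ realisations and probability at most $q^{-2a\,k(H_1,H_2,\mathrm{EC})}$ via Lemma~\ref{random_polynomial}, cancel the exponents with Lemma~\ref{balance_two}, and finish with the Lang--Weil dichotomy of Lemma~\ref{Langweil}, Markov with an exponent exceeding $2rb$, and a union bound over the at most $q^{2rb}$ root pairs. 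All of that matches the paper's proof of Proposition~\ref{no_color_iso_pair}.

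There is, however, one genuine gap, and it sits exactly at the point the paper flags as needing an ``indirect'' treatment. You claim that, after partitioning according to the relative order of the vertices under $\phi$, the set of colour isomorphic pairs on each piece ``is exactly the $\mathbb F_q$-points of a variety'' (minus a degeneracy locus), and you then apply Lemma~\ref{Langweil} to get the dichotomy $N_{X,Y}\le C^{\ast}$ or $N_{X,Y}\ge q/2$. This identification is false: the order conditions defining the piece are not algebraic, and the variety cut out by the equations $F(\Phi_{\Sigma}(e_\ell))=F(\Phi_{\Sigma}(e'_\ell))$ with the orientations dictated by an order type $\Sigma$ has $\mathbb F_q$-points whose coordinates do not respect $\Sigma$; since the actual colouring of $K_n$ evaluates $F$ on the correctly oriented edge and $F(u,v)\neq F(v,u)$ in general, such points need not correspond to colour isomorphic pairs at all. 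Consequently Lang--Weil gives the small/large dichotomy only for the full variety point count (a strict over-count of $N_{X,Y}$, the paper's ``shadow'' collection $\mathcal D$), while your moment estimate is stated for $N_{X,Y}$ itself — so the dichotomy and the moment bound, as you have written them, concern two different random variables and cannot be combined. The repair is the one the paper carries out: bound $N_{X,Y}$ by $|\mathcal W(\mathbb F_q)\setminus\mathcal W'(\mathbb F_q)|$ with $\mathcal W=\bigcup_{\Sigma}\mathcal W_{\Sigma}$, and redo the moment computation for this larger count, observing that a wrongly oriented solution still imposes at least $k(H_1,H_2,\mathrm{EC})$ evaluation constraints on each of the $2a$ independent coordinate polynomials (an edge pair can only split into more oriented pairs, so the constraint number does not decrease), whence the same $q^{-2ak}$ bound and Lemma~\ref{balance_two} cancellation apply. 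With that substitution (and with ``equals $q^{-2ak}$'' weakened to ``at most'', which is all Lemma~\ref{random_polynomial} yields here), your argument becomes the paper's proof.
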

\begin{proof}
Fix $X=(x_1,\cdots,x_r)$ and $Y=(y_1,\cdots,y_r)$, which are two $r$-subsets of $[n]$, point-wise disjoint. 
Define $\mathcal C= \mathcal C(X,Y)$ to be the collection of all pairs $(T_1,T_2)$ of colour isomorphic labelled
copies of $T$, which are rooted at $(X,Y)$. 
Then we are interested in estimating its size $|\mathcal C|$. 
Observe that, if there exists two colour isomorphic labelled copies of $K_0$-th 
power $T^{K_0}$, rooted at $(X,Y)$, 
 then it follows that $|\mathcal C|\geq K_0$.
 Therefore, it suffices to show that, with high probability, 
in the random colouring of $E(K_n)$, for any choice of $(X,Y)$, the size of the collection
$\mathcal C(X,Y)$ is smaller than $K_0$.

In order to estimate $|\mathcal C|$, we first estimate its $p$-th moment for any positive integer $p$. 
Note that $\mathcal C ^p$
can be interpreted as the collection of $p$-tuples of pairs (possibly repeating), namely, 
$\big ( (T_1^{(1)},T_2^{(1)}), \cdots, (T_1^{(p)},T_2^{(p)})  \big)$, such that 
\begin{enumerate}
\item for each $j=1,\cdots, p$, the pair $(T_1^{(j)},T_2^{(j)})$ is a pair of colour isomorphic 
and vertex disjoint labelled copies of $T$.
\item for each $j=1,\cdots, p$, the pair $(T_1^{(j)},T_2^{(j)})$ is rooted at $(X,Y)$.
\end{enumerate}
With this interpretation, we will count all these $p$-tuples with respect to all of their possible realizations.
So we define $\mathcal T^p(\mathcal C)$ to be the collection of pairs of graphs 
$(H_1,H_2)$ so that for some choice $\big ( (T_1^{(1)},T_2^{(1)}), \cdots, (T_1^{(p)},T_2^{(p)})  \big) \in \mathcal C^p$, 
we have 
$H_1= \bigcup_{j=1}^p T_1^{(j)}$ and $H_2=\bigcup_{j=1}^p T_2^{(j)}$. 

Moreover, for any fixed $(H_1,H_2) \in \mathcal T^p(\mathcal C)$ defined as above, they also come with 
a natural edge correspondence $EC$, as was explained earlier. 
Then we write 
$N_p(H_1,H_2,\text{EC})$ to denote the number of elements in $\mathcal C^p$ 
which satisfies that 
$H_1= \bigcup_{j=1}^p T_1^{(j)}$, $H_2=\bigcup_{j=1}^p T_2^{(j)}$,
and EC is induced by these $p$ pairs correspondingly. 
Clearly, 
\begin{equation}\label{Np_bound}
N_p(H_1,H_2,\text{EC}) = O\big ( q^{b \cdot ( v(H_1\cup H_2) -2r ) } \big).
\end{equation}
On the other hand, recall that among these $pb$ relations which defined EC, 
we can choose $k=k(H_1,H_2,\text{EC})$ from them which determine all these relations, and 
the number $k$ can not be smaller for this purpose. 
Consider the event that a specific $p$-tuple of pairs realising the pair $(H_1,H_2)$ and the prescribed EC
has been chosen to be all colour isomorphic pairs. By Lemma~\ref{random_polynomial}, provided that $d>pb$,
 the probability 
that such an event happens is at most 
 \begin{equation}
 q^{-2a k(H_1,H_2,\text{EC})}.
 \end{equation}
In order to upper bound $ \Bbb E(|\mathcal C|^p)$, 
we sum over all the triples $(H_1,H_2,\text{EC})$ that can possibly appear. Thus,
\begin{align}\label{p-th_moment_for_C}
       \Bbb E(|\mathcal C|^p) \leq  & \sum N_p(H_1,H_2, \text{EC}) q^{-2a k(H_1,H_2,\text{EC})} \\
\leq   & C \sum q^{b (v(H_1\cup H_2) -2r)} q^{-2a k(H_1,H_2,\text{EC})} = O_p(1), \nonumber
\end{align}
where the last inequality follows from (\ref{balance_for_two}).

It would be nice if we can relate the collection $\mathcal C$ to a certain algebraic variety 
 in order to apply 
the Lang-Weil bound. However this can only be done in an indirect way. 
For any labelled copy $T_1$ of $T$ found as a subgraph of $K_n$, define its \textit{shadow} 
as a subgraph of $G(U\bigcup V)$, where for every edge $(i,j)\in E(T_1)$, 
choose one and only one of the edges from either $(i,j)\in E(G)$ or $(j,i)\in E(G)$. 
Therefore, each copy $T_1$ has 
exactly $2^{b}$ many shadow graphs contained in $G$. 
Note that a shadow graph of $T_1$ may have more vertices but always has $b$ edges. 
We also say $T_1$ is the projection of $T_1'$ to $K_n$. 

For fixed $X$ and $Y$, we define $\mathcal D=\mathcal D(X,Y)\subset G^2$ 
as the collection 
such that, a pair $(T_1',T_2')$ of subgraphs of $G(U\bigcup V)$ belongs to $\mathcal D$
if and only if they are the shadows of an element of $\mathcal C=\mathcal C(X,Y)$.
 It turns out $\mathcal D$ can be interpreted as the difference set of two varieties. Let us explain below. 
  
 For definiteness, for the balanced rooted tree $T$, let  $u_1,\cdots,u_r$ denote the roots, 
  $v_1,\cdots,v_a$  denote the unrooted vertices of $T$, and $e_1,\cdots,e_b$ denote the edges. Now
 the fixed roots $x_1,\cdots,x_r,y_1,\cdots y_r$ are seen as distinct points in 
 $[n]$ which obtain a natural order.
 With respect to this order, 
 for any pair $(T_1,T_2)$ of vertex disjoint trees rooted at $(X,Y)$, 
 there is a certain \textit{order type} $\Sigma=(<_1, <_2)$, which is
 a pair of permutations on $[a+r]$, indicating how the vertices the two embedded copies 
 of $T$ are located with respect to natural order in $[n]$.
 It will be a bit cumbersome to actually write down these orders. 
We content ourselves by noticing that the number of all these order types with fixed $(X,Y)$
is at most $((r+a)!) ^2$. 

In a copy $T_1$ of $T$, with a prescribed order type say, $<_1$, 
for any edge $e=(w_i,w_j)$ we can write 
\[   
\Phi(e)= \Phi_{<_1} \big( (w_i, w_j) \big) = 
     \begin{cases}
      (w_i, w_j), & \quad \text{ if } w_i <_1 w_j, \\
      (w_j, w_i), & \quad\text{ if } w_j <_1 w_i. \\ 
     \end{cases}
\]

Now we consider the vector space $\Bbb F_q^{2ab}$, and write 
each element down with $2a$ coordinates, namely, 
$X_1,\cdots,X_a,Y_1,\cdots,Y_a$, 
where each $X_i$ or $Y_i$ represents an element from $[n] = V(K_n)$.
Fix $X=(x_1, \cdots, x_r)$ and $Y=(y_1, \cdots, y_r)$, 
and with respect to their order induced from $[n]$, we choose one compatible order 
$\Sigma =(<_1,<_2)$, discussed as in the previous paragraph. 
Define the variety $\mathcal W_\Sigma$ over $\overline{\Bbb F}_q$  as the zero set of 
the following $b$ polynomials in $\overline{\Bbb F}_q^{2ab}$, with respect to the $b$ edges of $T$. 

There are two kinds of equations, depending on the nature of an $e\in E(T)$. 
Firstly consider an edge $e\in E(T)$ which joins a root $u_i$ with some unrooted vertex $v_j$. 
Then corresponding edges in $T_1$ and $T_2$ are $(x_i,X_j)$ and $(y_i,Y_j)$.
We map them to the bipartite graph $G(U\bigcup V)$ via $\Phi_{<_1}$ and $\Phi_{<_2}$, respectively. 
In other words, $\Phi_{<1}(x_i,X_j)$ takes the image $(x_i,X_j)$ if $u_i<_1 v_j$, and 
it takes the image $(X_j,x_i)$ if $v_j<_1 u_i$ (the image of $\Phi_{<2}(y_i,Y_j)$ can also be obtained similarly). So 
we obtain the equation
\begin{equation}\label{equation_on_roots}
F \big (\Phi_{<_1}(x_i, X_j) \big)-F \big(\Phi_{<_2}(y_i,Y_j) \big)=0.
\end{equation}
Secondly consider an edge $e\in E(T)$ joining a pair of unrooted vertices $v_i$ and $v_j$. 
The corresponding pair of edges in $(T_1,T_2)$ are $(X_i,X_j)$ and $(Y_i,Y_j)$, respectively. 
Then we map them to the bipartite graph $G(U\bigcup V)$ via $\Phi_{<_1}$ and $\Phi_{<_2}$, recpectively.
In other words, $\Phi_{<_1}(X_i,X_j)$
 takes the image $(X_i,X_j)$ if $v_i<_1 v_j$ and it takes the image $(X_j,X_i)$
if $v_j<_1v_i$ (the image of $\Phi_{<2}(Y_i,Y_j)$ can also be obtained similarly). So we obtain the equation 
\begin{equation}\label{equation_without_roots}
F \big (\Phi_{<_1}(X_i,X_j) \big) - F \big(\Phi_{<_2}(Y_i,Y_j) \big)=0.
\end{equation}
These functions define a variety $\mathcal W_\Sigma$.
The union of all such varieties, $\mathcal W=\bigcup_{\Sigma}\mathcal W_{\Sigma}$, 
is itself a variety. Then we define $\mathcal W'$ to be the degenerate variety, which is defined by the union of the zero sets given by any one of the following equations. 
\begin{align}
& X_k=x_i \text { or } X_k=y_j,  \text{ for some } k=1,\cdots,a, \text{ and } i,j=1,\cdots,r.\\
& Y_k=x_i \text{ or } Y_k=y_j,\text{ for some } k=1,\cdots,a, \text{ and } i,j=1,\cdots,r. \\
& X_k=Y_\ell, \text{ for some }k,\ell=1,\cdots,a.
\end{align}
Then we claim that 
$|\mathcal D|  = |\mathcal W(\Bbb F_q) \backslash \mathcal W'(\Bbb F_q)|$. 
To check this, one notices that 
 every element in $\mathcal W(\Bbb F_q) \backslash \mathcal W'(\Bbb F_q)$ corresponds to a non-degenerate solution 
 of the equations (\ref{equation_on_roots}) and (\ref{equation_without_roots}), with respect to a certain order 
 $\Sigma=(<_1, <_2)$. But the order is irrelevant for now since we are working on the graph $G(U\bigcup V)$. 
 Clearly, each one such solution describes exactly a shadow graph 
 of some pair $(T_1,T_2)$ of copies of $T$'s found as subgraphs of $K_n$ rooted at $(X,Y)$. 
 In other words, it represents a pair of subgraphs of $G$, 
 whose projection to $K_n$ is an element of $\mathcal C$.
 We observe the basic relationship $|\mathcal D(X,Y)| = 2^{2b}|\mathcal C(X,Y)|$.
 
 We still need to estimate $|\mathcal D|^p$ for any positive $p$. 
 Note it counts the number of ordered $p$ tuples of pairs of shadows, which projects to an element of $\mathcal C^p$. 
 Take any element of $\mathcal D^p$, written down as 
 $(T_1^{(1)'}, T_2^{(1)'}),  \cdots, (T_1^{(p)'},T_2^{(p)'})$, such that
 \begin{enumerate}
 \item for each $j=1,\cdots,p$, the pair $(T_1^{(j)'}, T_2^{(j)'})$ obtains the same colour in the randomly coloured graph $G(U\bigcup V)$.
 \item for each $j=1,\cdots,p$, the pair $(T_1^{(j)'}, T_2^{(j)'})$ projects to a pair $(T_1^{(j)}, T_2^{(j)}))$ belonging to $\mathcal C$.
 \end{enumerate}
  Naturally we obtain an element $(T_1^{(1)}, T_2^{(1)}),  \cdots, (T_1^{(p)},T_1^{(p)})\in \mathcal C^p$.
 As in earlier arguments, it comes with a pair of graphs $(H_1,H_2)$ with an edge correspondence $EC$, 
 where $H_1=\bigcup_{j=1}^pT_1^{(j)}$ and $H_2=\bigcup_{j=1}^p T_2^{(j)}$.
  Define $N_p'(H_1,H_2,\text{EC})$ the number of elements of $\mathcal D^p$ which induces a fixed tripe $(H_1,H_2,\text{EC})$. 
 It follows from (\ref{Np_bound}) that
 \begin{equation}
 N_p'(H_1,H_2,\text{EC})=  2^{2b}   N_p(H_1,H_2,\text{EC}) = O\big (q^{b ( v(H_1\cup H_2) -2r ) } \big).
 \end{equation}
 We can again sum over all the possible triples $(H_1,H_2,\text{EC})$ which are obtained after the projections, as was done in (\ref{p-th_moment_for_C}). 
 
Now, in the $p$-tuple of shadow pairs, the number of edge relations that need to be checked is at least $k(H_1,H_2,\text{EC})$ and possibly larger than it. To see this, we choose $k$ edge pairs which can determine $EC$ for the $p$-tuple in $\mathcal C$. 
Then observe possibly one pair of edges in these equations might become two pairs in the graph $G(U\bigcup V)$. 
Then one must keep both in order for the system of equations to be determined. 
Consider the event that a specific $p$-tuple of pairs of shadows which induce a fixed pair $(H_1,H_2)$ with $EC$ in $K_n$. Then, by Lemma~\ref{random_polynomial} and provided $d>pb$, 
the probability that such an event happens is 
at most 
\begin{equation} q^{-2ak(H_1,H_2,\text{EC})}.
\end{equation}
Then, we can follow the lines of (\ref{p-th_moment_for_C}) to estimate that 
\begin{align}
       \Bbb E(|\mathcal D|^p) \leq  & \sum N_p'(H_1,H_2,\text{EC})q^{-2a k(H_1,H_2,\text{EC})} \\
\leq   & C \sum q^{b (v(H_1\cup H_2) -2r)} q^{-2a k(H_1,H_2,\text{EC})} = O_p(1). \nonumber
\end{align}
Finally, we can apply the Lang-Weil bound (Lemma~\ref{Langweil}). For some sufficiently large constant $K'$, and for any positive integer $p$.
\begin{align}
                    &  \textbf{Prob} \big(|\mathcal D| > K'\big) = \textbf{Prob}\big(|\mathcal W\backslash \mathcal W'| > K' \big )\\
                 = \ \ &  \textbf{Prob} \big(|\mathcal W\backslash \mathcal W'|^p \geq (\frac{q}{2})^p \big) \leq O_p(1) \cdot \frac {1}{q^p}, \nonumber
\end{align}
where the last step follows from Markov's inequality. 

Take $K_0=\frac{K'}{2^{2b}}$ and then define 
$B$ to be the collection of pairs $(X,Y)$ for which $|\mathcal C(X,Y)| > K_0$. Then the event 
$|B|>0$ implies that $|\mathcal D(X,Y)|> K'$ for at least some choice $(X,Y)$. 
Take 
$p=2rb+1$, and recall we have chosen that $d>2rb^2+b$. 
Applying the Markov inequality,  
\begin{equation}
\textbf{Prob}(|B|>0) \leq \Bbb E(|B|) \leq q^{2r b} \frac{O_p(1)}{q^p} = o(1),
\end{equation}
where the second inequality is by the union bound. The proof is completed.
\end{proof}

Next we show with high probability, the random colouring is bounded.  
\begin{prop}\label{bounded_whp}
  Suppose $2a\geq b$.
With probability converging to $1$ as $q$ tends to infinity, 
the random colouring obtained for $E(K_{q^b})$ via the random polynomial $F$
is $C_0$-bounded for some absolute constant $C_0$. 
\end{prop}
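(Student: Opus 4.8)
The plan is to reduce $C_0$-boundedness of the colouring on $K_{q^b}$ to a uniform bound on the fibers of the two one-sided evaluation maps $v\mapsto F(w,v)$ and $u\mapsto F(u,w)$, and then to obtain such a bound by combining the Lang--Weil estimate (Lemma~\ref{Langweil}) with the same moment method used in the proof of Proposition~\ref{no_color_iso_pair}. For the reduction, recall that the edge $(i,j)$ of $K_{q^b}$ receives colour $F(\Phi(i,j))$, and that $\Phi$ sends $(i,j)$ with $i<j$ (in the order on $[n]$ induced by $\phi$) to $(\phi(i),\phi(j))\in U\times V$. Thus, writing $w=\phi(i)$, every edge of a fixed colour $c$ incident to the vertex $i$ is either an edge $(i,j)$ with $j>i$, which contributes to $N^+_w(c):=|\{v\in\Bbb F_q^b:F(w,v)=c\}|$, or an edge $(i,j)$ with $j<i$, which contributes to $N^-_w(c):=|\{u\in\Bbb F_q^b:F(u,w)=c\}|$. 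Hence the colouring is $C_0$-bounded as soon as $N^+_w(c)\le C_0/2$ and $N^-_w(c)\le C_0/2$ for all $w\in\Bbb F_q^b$ and all $c\in\Bbb F_q^{2a}$, and by the symmetry between $U$ and $V$ it is enough to control $N^+_w(c)$.

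For fixed $w$ and $c$, the quantity $N^+_w(c)$ is the number of $\Bbb F_q$-points of the variety $\mathcal F_{w,c}\subset\overline{\Bbb F}_q^{\,b}$ cut out by the $2a$ polynomials $f_i(w,\cdot)-c_i$, $i=1,\dots,2a$, which are defined over $\Bbb F_q$ and have degree at most $d$; since $d,a,b$ are absolute constants, $\mathcal F_{w,c}$ has complexity bounded by some $M=M(d,a,b)$. Applying Lemma~\ref{Langweil} to $\mathcal F_{w,c}$ (with the second variety empty) shows that $N^+_w(c)$ is either at least $q/2$ or at most $C(M)$. Setting $C_0:=2C(M)$, it therefore suffices to prove that with probability tending to $1$ there is no pair $(w,c)$ and no choice of side for which the corresponding fiber has size at least $q/2$.

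To this end I would bound a high moment. Fix $w$, $c$ and an integer $p$ with $p-1\le d$. Expanding $\Bbb E\big[N^+_w(c)^p\big]=\sum_{v_1,\dots,v_p}\textbf{Prob}\big[F(w,v_\ell)=c\text{ for }\ell=1,\dots,p\big]$ and grouping the tuples $(v_1,\dots,v_p)$ according to the number $m\le p$ of distinct values they realise, Lemma~\ref{random_polynomial}, applied coordinatewise to $f_i-c_i$ (still uniform in $\mathcal P_d$) and using independence of the $f_i$, gives that each such tuple has probability $q^{-2am}$; as there are $O_p(q^{bm})$ tuples with a given $m$, the hypothesis $2a\ge b$ yields $q^{m(b-2a)}\le1$ and hence $\Bbb E\big[N^+_w(c)^p\big]=O_p(1)$. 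By Markov's inequality $\textbf{Prob}\big[N^+_w(c)\ge q/2\big]=O_p(q^{-p})$, and summing this over the $q^b$ choices of $w$, the $q^{2a}$ choices of $c$, and the two sides, the probability that some fiber reaches $q/2$ is $O_p(q^{b+2a-p})$. Taking $p$ to be a constant larger than $b+2a$ — which is legitimate since the chosen $d>2rb^2+b$ is large enough, recalling $a\le b$ — makes this $o(1)$, and the proof is complete.

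The point that requires care is the role of the hypothesis $2a\ge b$: it is exactly what makes $\Bbb E\big[N^+_w(c)\big]=q^{\,b-2a}\le1$, so that the one-sided fibers are typically of bounded size and the Lang--Weil dichotomy can be resolved on the small side. It would be a mistake to bound instead the number of monochromatic pairs of edges at a vertex, since that quantity is of order $q^{2(b-a)}$ and need not be $o(q)$; passing to the one-sided maps is precisely what lets the boundary case $2a=b$ be treated together with $2a>b$. The remaining ingredients — the order bookkeeping in the reduction, the complexity estimate for $\mathcal F_{w,c}$, and the counting of tuples in the moment computation — are routine.
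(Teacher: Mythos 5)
Your proposal is correct and follows essentially the same route as the paper's proof: the reduction to bounding the two one-sided fibers $F(\cdot,Y_0)=Z_0$ and $F(X_0,\cdot)=Z_0$, the $p$-th moment estimate via Lemma~\ref{random_polynomial} together with $2a\geq b$, the Lang--Weil dichotomy of Lemma~\ref{Langweil} to replace the threshold $C_0/2$ by $q/2$, and Markov plus a union bound over the $O(q^{2a+b})$ choices. Your remark that $f_i-c_i$ is still uniform in $\mathcal P_d$ is a small point the paper glosses over, but otherwise the two arguments coincide.
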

\begin{proof} Let us recall the way we obtain the colouring for $E(K_n)$ with $n=q^b$. 
Each edge $(i,j) \in E(K_n)$ with $i<j$ 
is assigned with the the colour $F \big( (\phi(i),\phi(j)) \big) $. 
Therefore, to show this colouring is bounded with high probability, 
it suffices to show that with high probability,
for a constant $C_0$, the following two statements holds.
 \begin{enumerate}[(i)]
 \item for any $Y_0\in \Bbb F_q^b$ and $Z_0\in \Bbb F_q^{2a}$, 
 the equation 
 \begin{equation}\label{equation_1}
 F(X,Y_0)= Z_0
 \end{equation}
 has at most $\frac{C_0}{2}$ solutions for $X\in \Bbb F_q^b$.
 \item for any $X_0\in \Bbb F_q^b$ and $Z_0\in \Bbb F_q^{2a}$, 
 the equation 
 \begin{equation}
 F(X_0,Y)= Z_0
 \end{equation}
 has at most $\frac{C_0}{2}$ solutions for $Y\in \Bbb F_q^b$.
 \end{enumerate}
 Clearly, we only need to check item (i) since (ii) can be dealt with very similarly. 
 We fix  $Y_0\in \Bbb F_q^b$ and $Z_0\in \Bbb F_q^{2a}$. Let $\mathcal W$ denote the set of $X\in \Bbb F_q^b$ satisfying 
  $F(X,Y_0)=Z_0$. We are interested in its size $|\mathcal W|$. 
  Note that $\mathcal W$ is a random variety over $\Bbb F_q$ contained in $\Bbb F_q^b$, that is, 
  $\mathcal W=\mathcal W(\Bbb F_q)$, and $\mathcal W$ has complexity at most $M$ which is the larger than 
    $a,b$, and $d$. We need to estimate the moments of $|\mathcal W|$.

For an integer $p\geq 0$,  for any $p$ distinct vertices $X_1,\cdots,X_p\in \Bbb F_q^{b}$, by Lemma~\ref{random_polynomial}, 
\begin{equation}
\text{\textbf{Prob}} \big[F(X_i,Y_0)=Z_0 \text{ for each } i=1,\cdots,p\big] =\frac{1}{q^{2a p}}.
\end{equation} 
  
For any positive integer $p$, 
the random variable $|\mathcal W|^p$ counts the number of $p$-tuples of solutions of (\ref{equation_1}). Note that these tuples might repeat themselves. Then we estimate its expected value summing over the possible number of actual number of solutions appeared in these tuples. Indeed, if the actual number of solutions appearing in this $p$-tuple is $j\in [p]$, there are $q^{bj}$ ways to choose this $i$ elements, and there are $j^p$ ways to make the choices of this $p$-tuple. So we have
\begin{equation}
\Bbb E(| \mathcal W|^p) \leq \sum_{j=1}^p q^{bj}\cdot j^p \frac{1}{q^{2aj}} \leq \sum_{j=1}^p  j^p =O_p(1),
\end{equation}
where we have used $2a\geq b$. Then by the Lang-Weil bound (Lemma~\ref{Langweil}), 
there exists a constant, say $\frac{C_0}{2}$, such that, 
we can take $p=2a+b+2$, and apply the Markov's inequality to make the following estimate.
\begin{align}
   & \ \ \text{\textbf{Prob}} \big [ |\mathcal W|\geq \frac{C_0}{2}\big ] = \text{\textbf{Prob}} \big [ |\mathcal W|\geq \frac{q}{2}\big ]\\
=  & \ \ \text{\textbf{Prob}} \big [ |\mathcal W|^p \geq (\frac{q}{2})^p\big ] \leq \frac{2^p \Bbb E(|\mathcal W|^p)}{q^p}\leq  \frac{1}{q^{2a+b+1}}.\nonumber
\end{align}
The choices of $Y_0,Z_0$ as in item (i) or the choices of $X_0,Z_0$ in item (ii) give us 
$2q^{2a+b}$ choices in total. 
By union bound, 
with probability $\frac{1}{q}$, for any choice of $(Y_0,Z_0)$ or $(X_0,Z_0)$ among those fixed, 
each of the equations in (i) or (ii) 
has at most $\frac{C_0}{2}$ solutions. It follows the colouring for the graph $G$ is $C_0/2$-bounded with probability at most $\frac 1q$. Then the colouring of $E(K_n)$ is $C_0$-bounded with high probability and the proof finishes. 
\end{proof}

\begin{proof}[Remains of the proof of Theorem~\ref{main}] For any $n$, one can choose some prime $q$ with $n^{1/b} \leq q\leq 2n^{1/b}$ by Bertrand's postulate. Then we regard $[n]$ as a subset of $\Bbb F_q^b$ and consider the random colouring with $q^{2a}$ colours as above. Note that 
$n^{\frac {2a}{b}} \leq q^{2a} \leq 2^{2a} n^{\frac{2a}{b}}$.
By Propositions~\ref{no_color_iso_pair} and \ref{bounded_whp}, since $2a\geq b$, when $n$ is sufficiently large so that $q$ is sufficiently large, 
with positive probability, the colouring admits no colour isomorphic pair of $T^{K_0}$'s and is $C_0$-bounded. 
Finally, we apply Lemma~\ref{bounded_lemma} to finish the proof. 
\end{proof}

\begin{proof}[Proof of Corollary~\ref{2a<b}]
With the assumption that $2a\leq b$, we claim that the balanced tree $T$ 
must contain some unrooted vertex which joins at least two roots. 
Suppose otherwise, then for any choice $v$ 
of the $a$ unrooted vertices of $T$, at most one of its neighbours can be a root. It follows 
the total number $r$ of the roots is at most $a$. In particular the total vertex number of $T$ is at most $2a$. 
On the other hand, since $2a \leq b$, we see that the total vertex number of the tree $T$ 
is at least $2a+1$, which is absurd. 
Therefore, $T$ contains a subgraph which is a copy of path of length $2$, namely $P_2$,
 whose two endpoints are both rooted. Note for this rooted $P_2$ which is clearly balanced, 
 we have exactly $\rho_{P_2}=2$. 
By Theorem~\ref{main}, for sufficiently large $t$, we have $f_2(n,P_2^{t})=O(n)$. Since $f_2$ is decreasing with respect to inclusion relation of graphs, 
we have $f_2(n,T^t)\leq f_2(n,P_2^t)=O(n)$. 
Finally, since any proper edge colouring of $K_n$ uses at least $n-1$ colours, we know $f_2(n,T^t)=\Omega(n)$
the conclusion follows. 
\end{proof}

\section{Lower bound for $f_2(n,K_{s,t}^{\lowercase{sub}})$}\label{lower_bounds_Kst'}
Viewing Theorem~\ref{main}, the proof of Theorem~\ref{f2_lowerbound} 
reduces to the following result.
\begin{theorem}\label{lower}
 For any $2\leq s\leq t$, $f_2(n,K_{s,t}^{\text{sub}})=\Omega(n^{1+\frac 1s})$.
\end{theorem}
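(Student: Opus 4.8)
Since every proper edge colouring of $K_n$ uses at least $n-1$ colours, we trivially have $f_2(n,K_{s,t}^{\text{sub}})\ge n-1$, so it is enough to exhibit a constant $c=c(s,t)>0$ with the property that, for all sufficiently large $n$, \emph{every} proper colouring $\chi\colon E(K_n)\to Q$ with $|Q|=q\le c\,n^{1+1/s}$ contains two vertex-disjoint colour-isomorphic copies of $K_{s,t}^{\text{sub}}$. The plan is to encode such a pair as a single ``non-degenerate'' copy of $K_{s,t}^{\text{sub}}$ inside an auxiliary graph built from $\chi$, and then to run a supersaturation argument against the Tur\'an bound of Lemma~\ref{Kst'turan}.

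First I would introduce the \emph{pair graph} $\Gamma=\Gamma(\chi)$, whose vertex set consists of all ordered pairs $(u,v)$ of distinct vertices of $K_n$, and in which $(u_1,v_1)(u_2,v_2)$ is an edge exactly when $\chi(u_1u_2)=\chi(v_1v_2)$ (in particular $u_1\ne u_2$ and $v_1\ne v_2$). Call a copy of $K_{s,t}^{\text{sub}}$ in $\Gamma$, say on the vertices $\{(x_w,y_w):w\in V(K_{s,t}^{\text{sub}})\}$, \emph{non-degenerate} if the $x_w$ are pairwise distinct, the $y_w$ are pairwise distinct, and $\{x_w\}\cap\{y_w\}=\emptyset$. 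The key observation is that a non-degenerate copy is exactly what we want: the subgraphs of $K_n$ induced on $\{x_w\}$ and on $\{y_w\}$ are vertex-disjoint copies of $K_{s,t}^{\text{sub}}$, and $x_w\mapsto y_w$ is a colour-preserving isomorphism between them, because $\chi(x_wx_{w'})=\chi(y_wy_{w'})$ for every edge $ww'$ of $K_{s,t}^{\text{sub}}$ by definition of $\Gamma$. So the whole task reduces to forcing a non-degenerate copy of $K_{s,t}^{\text{sub}}$ in $\Gamma$.

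To force such a copy I would bound $e(\Gamma)$ from below and compare with the Tur\'an number of $K_{s,t}^{\text{sub}}$. Since $\chi$ is proper, any two edges of the same colour are disjoint, and a short count then shows that each unordered pair of distinct same-colour edges contributes a bounded number of edges to $\Gamma$ and that, up to lower-order terms, these are all of them; writing $m_\alpha$ for the number of edges of colour $\alpha$ and using Cauchy--Schwarz we get
\begin{equation}
e(\Gamma)\;\ge\;2\sum_{\alpha\in Q}m_\alpha(m_\alpha-1)\;\ge\;\frac{2\big(\binom n2\big)^2}{q}-2\binom n2\;\ge\;\frac{n^4}{C_1\,q}
\end{equation}
for an absolute constant $C_1$ and all large $n$, hence $e(\Gamma)\ge \tfrac1{C_1c}\,n^{3-1/s}$ under our hypothesis on $q$. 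On the other hand $|V(\Gamma)|\le n^2$, so Lemma~\ref{Kst'turan} (which applies since $2\le s\le t$) gives $\mathrm{ex}\big(|V(\Gamma)|,K_{s,t}^{\text{sub}}\big)=O\big((n^2)^{3/2-1/(2s)}\big)\le C_2\,n^{3-1/s}$ for some $C_2=C_2(s,t)$. Choosing $c$ small enough that $\tfrac1{C_1c}\ge 2C_2$ makes $e(\Gamma)\ge 2\,\mathrm{ex}(|V(\Gamma)|,K_{s,t}^{\text{sub}})$, and the standard supersaturation phenomenon (Erd\H{o}s--Simonovits) then yields $\delta=\delta(s,t)>0$ and at least $\delta\,|V(\Gamma)|^{v}\ge \delta'n^{2v}$ labelled copies of $K_{s,t}^{\text{sub}}$ in $\Gamma$, where $v=v(K_{s,t}^{\text{sub}})=s+t+st$. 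Finally the number of \emph{degenerate} labelled copies is only $O_{s,t}(n^{2v-1})$: each of the $O_{s,t}(1)$ possible coincidences among the $2v$ coordinates in $[n]$ (two equal first coordinates, two equal second coordinates, or a first coordinate equal to a second one) cuts the number of admissible coordinate choices by a factor of order $n$. Hence for $n$ large a non-degenerate copy must exist, which completes the argument and gives $f_2(n,K_{s,t}^{\text{sub}})=\Omega(n^{1+1/s})$.

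I do not expect a serious obstacle here: the proof is essentially immediate once $\Gamma$ is written down. The single point that must be handled carefully — and the reason the exponent comes out exactly right, matching the upper bound obtained from Theorem~\ref{main} via $K_{s,t}^{\text{sub}}=(K_{1,s}^{\text{sub}})^{t}$ with $\rho_{K_{1,s}^{\text{sub}}}=\tfrac{2s}{s+1}$ — is the numerology: $e(\Gamma)\gtrsim n^4/q$ beats $\mathrm{ex}(n^2,K_{s,t}^{\text{sub}})\asymp n^{3-1/s}$ precisely in the range $q\lesssim n^{1+1/s}$. The remaining work, namely the exact edge count of $\Gamma$ and the routine supersaturation-plus-degeneracy bookkeeping, is mechanical; note also that Lemma~\ref{Kst'turan} is being invoked for the auxiliary graph $\Gamma$ rather than for $K_n$, which is the sense in which it is not used ``directly''.
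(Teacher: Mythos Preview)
Your approach has a genuine gap at the supersaturation step. You assert that from $e(\Gamma)\ge 2\,\mathrm{ex}(|V(\Gamma)|,K_{s,t}^{\text{sub}})$ the Erd\H{o}s--Simonovits supersaturation theorem yields $\delta\,|V(\Gamma)|^{v}$ labelled copies of $K_{s,t}^{\text{sub}}$. But that theorem gives $\Omega(N^{v(H)})$ copies only when the edge count exceeds the Tur\'an \emph{density} by a positive constant times $\binom{N}{2}$; for a bipartite $H$ this means one needs $e(\Gamma)\ge \varepsilon\binom{N}{2}$ with $N=|V(\Gamma)|\sim n^{2}$. Here $e(\Gamma)\asymp n^{3-1/s}=o(n^{4})$, so the hypothesis fails and no conclusion of the form $\Omega(n^{2v})$ is available. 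At the density you actually have, the heuristic (and, under Sidorenko-type bounds, the truth) is roughly $N^{v}(e(\Gamma)/N^{2})^{e(H)}\asymp n^{2s}$ copies, which is dwarfed by your trivial upper bound $O(n^{2v-1})$ on degenerate copies. So the subtraction ``total minus degenerate'' collapses.

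This is precisely the obstacle the paper's proof is designed to overcome, and it is flagged there explicitly: having $e(\mathcal F)$ above the Tur\'an threshold guarantees \emph{some} copy of $K_{s,t}^{\text{sub}}$ in the auxiliary graph, but not a clean one. The paper therefore avoids any counting-versus-degenerate argument and instead passes to an almost-regular subgraph $\mathcal F'$, builds a red/blue auxiliary colouring on $\mathcal A$ recording whether common neighbourhoods are large, and then runs a Janzer-style greedy embedding (Lemmas~\ref{red_means_clean}--\ref{non_red_Y} and Proposition~\ref{big_light_neighbour}) that at every step uses the properness of the original colouring (via Lemma~\ref{not_many_intersection}) to steer around degeneracies. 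The non-trivial combinatorics lies exactly in producing a clean copy at the extremal edge density, not merely many copies; your proposal skips this and so does not prove the theorem.
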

The proof of this theorem is based on a general embedding scheme developed by Janzer~\cite{janzer2018improved}.
The main idea is as follows. Fix an arbitrary proper colouring of $E(K_n)$ 
with $k_0=o(n^{1+\frac 1s})$ colours and we will show there exists at least two colour isomorphic and vertex disjoint copies of $K_{s,t}^{\text{sub}}$ in $K_n$. 
Note that each colour class gives a monochromatic matching.
We choose only monochromatic $2$-matchings from them, to construct an auxiliary graph 
which already indicates pairs of colour isomorphic edges. 
Finally, we try to embed a copy of $K_{s,t}^{\text{sub}}$ into the auxiliary graph in a clean way, 
meaning that back to the original graph, there is a pair of colour isomorphic copies of vertex disjoint $K_{s,t}^{\text{sub}}$.

So we start by counting the total number of monochromatic $2$-matchings. 
Note each colour class $i$ 
with $C_i$ edges provides $C_i\choose 2$ such monochromatic $2$-matchings. 
Therefore, applying Jensen's inequality, 
the total number of two monochromatic $2$-matchings is at least 
\begin{equation}
\sum_{i=1}^{k_0} {C_i\choose 2} \geq k_0 { \frac {1}{k_0}\sum_{i}C_i \choose 2} \sim \frac{n^4}{k_0}= \omega(n^{3-\frac 1s}).
\end{equation}

We are going to define an auxiliary bipartite 
graph $\mathcal F$ as follows. 
Consider a random ordering of the vertex set of the edge coloured complete graph $K_n$, and simply label 
these vertices with $1,\cdots,n$ according this random order.
Then we partition the vertex set into four equal-sized parts as follows 
(up to throwing away a few vertices, we can 
assume $n$ is divisible by $4$ for simplicity).  
\begin{align}
  &  X_1=\{1,\cdots, \frac{n}{4}\},    \ \   X_2=\{\frac{n}{4}+1,\cdots, \frac{n}{2}\}, \\
  &  Y_1=\{\frac{n}{2}+1,\cdots, \frac{3n}{4}\},     Y_2=\{ \frac{3n}{4}+1,\cdots,n\}.
  \end{align} 
Define a bipartite graph $\mathcal F$ with the vertex bipartition 
$V(\mathcal F) = \big( X_1\times X_2 \big) \bigcup \big(Y_1\times Y_2\big)$. 
A pair $( (x_1,x_2), (y_1,y_2))$ belongs to edge set $E(\mathcal F)$
if and only if $(x_1,y_1)$ and $(x_2,y_2)$ form
 a monochromatic matching in the edge coloured graph $K_n$. 

Clearly for each labeled monochromatic $2$-matching, 
the probability that it is included in this collection is exactly $1/4^4$. 
It follows that the expected edge number of $\mathcal F$ satisfies that
\begin{equation}
\Bbb E(e (\mathcal F))=\omega(n^{3-\frac 1s})= \omega \big(v(\mathcal F)^{\frac 32-\frac {1}{2s}} \big).
\end{equation}
So we can find one specific ordering, with respect to which, the auxiliary graph $\mathcal F$ has 
at least $ \omega \big( v(\mathcal F)^{\frac 32-\frac {1}{2s}} \big)$
edges. 
We fix this ordering by simply writing the vertex set of $K_n$ as $[n]$.
Since it is known that $\text{ex}(n,K_{s,t}^{\text{sub}})= O(n^{\frac 32-\frac {1}{2s}})$ (see Lemma~\ref{Kst'turan}), 
it immediately follows that in the auxiliary graph, there exists a copy of $K_{s,t}^{\text{sub}}$. 
However, it does not automatically imply the existence of two colour isomorphic copies of $K_{s,t}^{\text{sub}}$ in the coloured graph $K_n$. 

\begin{definition}
We call a subgraph $H\subset \mathcal F$ \textit{clean}, if its vertex set $V(H)$ corresponds to exactly 
$2v(H)$ vertices in $K_n$. In other words, the underlying vertices corresponding to all the elements in $V(H)$ 
are pair-wise disjoint. 
\end{definition}

With this definition, in order to find a pair of colour isomorphic and vertex disjoint copies of $K_{s,t}^{\text{sub}}$ in $K_n$,
it suffices to find a clean copy of $K_{s,t}^{\text{sub}}$ in the auxiliary graph $\mathcal F$.
We have the following crucial observation.
\begin{lemma}[see Lemma 2.2 of \cite{xu2020color}]\label{not_many_intersection}
Suppose $e\in V(\mathcal F)$ and $f,f'$ are two neighbours of $e$ in $\mathcal F$. 
Then the three elements $e,f,f'$ of $V(\mathcal F)$
do not share any common underlying vertices in $K_n$.
\end{lemma}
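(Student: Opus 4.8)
The plan is to unwind the definitions of $\mathcal{F}$ and of its edges, and to use the properness of the original colouring of $E(K_n)$. Recall that a vertex $e \in V(\mathcal{F})$ is a pair, say $e = (x_1, x_2)$ with $x_1 \in X_1$, $x_2 \in X_2$ (the case $e \in Y_1 \times Y_2$ is symmetric), and its ``underlying vertices'' in $K_n$ are exactly $x_1$ and $x_2$. An edge of $\mathcal{F}$ joining $e = (x_1, x_2)$ to $f = (y_1, y_2) \in Y_1 \times Y_2$ witnesses that $(x_1, y_1)$ and $(x_2, y_2)$ form a monochromatic $2$-matching in $K_n$, i.e.\ both edges get the same colour and $\{x_1, y_1\} \cap \{x_2, y_2\} = \emptyset$. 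In particular $x_1 \ne x_2$ and $y_1 \ne y_2$ automatically, and since $X_1, X_2, Y_1, Y_2$ are disjoint blocks, the underlying vertices of $e$ lie in $X_1 \cup X_2$ while those of $f$ lie in $Y_1 \cup Y_2$, so $e$ and $f$ never share an underlying vertex. Thus the only thing to rule out is that the two neighbours $f$ and $f'$ of $e$ share an underlying vertex with each other.

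First I would fix $e = (x_1, x_2) \in X_1 \times X_2$ and write $f = (y_1, y_2)$, $f' = (y_1', y_2')$, both in $Y_1 \times Y_2$, with $f, f'$ adjacent to $e$. By the edge definition, $(x_1, y_1)$ and $(x_2, y_2)$ are monochromatic, in colour $c$ say, and $(x_1, y_1')$ and $(x_2, y_2')$ are monochromatic, in colour $c'$ say. Now suppose for contradiction that $f$ and $f'$ share an underlying vertex. Since $y_1, y_1' \in Y_1$ and $y_2, y_2' \in Y_2$ and $Y_1 \cap Y_2 = \emptyset$, a shared underlying vertex forces either $y_1 = y_1'$ or $y_2 = y_2'$ (not a cross equality). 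Consider the case $y_1 = y_1'$: then $x_1 y_1$ and $x_1 y_1'$ are the \emph{same} edge of $K_n$, so $c = \chi(x_1 y_1) = \chi(x_1 y_1') = c'$. But then $x_2 y_2$ and $x_2 y_2'$ are two edges at $x_2$ with the same colour $c = c'$; by properness of $\chi$ they must be the same edge, so $y_2 = y_2'$, whence $f = f'$, contradiction. The case $y_2 = y_2'$ is symmetric (swap the roles of the two coordinates, i.e.\ use properness at $x_1$ after deducing equal colours from the edge at $x_2$). The case where $e \in Y_1 \times Y_2$ and $f, f' \in X_1 \times X_2$ is handled identically by interchanging $X$'s and $Y$'s.

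There is essentially no obstacle here; the content is just that an edge of $\mathcal{F}$ packages a proper-colouring constraint, and two edges at a common vertex of $\mathcal{F}$ force a repeated colour at one of $e$'s two underlying vertices, which by properness collapses the two neighbours. The one point to be careful about is the bookkeeping of which block each coordinate lives in, so that a ``shared underlying vertex'' really does yield a coordinate-wise equality $y_1 = y_1'$ or $y_2 = y_2'$ rather than some spurious cross-equality — but this is immediate from $X_1, X_2, Y_1, Y_2$ being pairwise disjoint. I would present the argument for one case in full and note the others follow by symmetry.
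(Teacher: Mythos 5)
Your proof is correct and follows essentially the same route as the paper's: reduce to a possible shared vertex between the two neighbours $f,f'$, note that the block structure forces a coordinate-wise equality, and then derive a repeated colour at one underlying vertex of $e$, contradicting properness (or collapsing $f=f'$). If anything, your write-up is slightly more careful than the paper's, which jumps directly to "contradicting the colouring being proper" without separating out the degenerate $f=f'$ case.
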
 
\begin{proof}
We can assume $(e,f), (e,f')\in E(\mathcal F)$ and the other case is similar. 
Write $e=(x_1,x_2)$, $f=(y_1,y_2)$ and $f'=(y_1',y_2')$. The only possibility of some common underlying vertex
is between $f$ and $f'$. Suppose it happens, then either $y_1=y_1'$ or $y_2=y_2'$. 
Suppose the first case happens and the second case can be dealt with similarly. It follows 
that the two edges $(x_2,y_2)$ and $(x_2,y_2')$ was assigned with the same colour with the colour of 
the edge $(x_1,y_1)=(x_1,y_1')$, contradicting the colouring being proper. 
\end{proof}

It is standard to extract a regular bipartite graph subgraph $\mathcal {F}'$ of $\mathcal F$.  
More precisely, apply Lemma~\ref{almost_regular}. 
There is a subgraph $\mathcal F'$ of $\mathcal F$
which is bipartite on the vertex bipartition $\mathcal A \bigcup \mathcal B$, 
with $|\mathcal A|=m$ and $|\mathcal B|=\Theta(m)$, where $m$ tends to infinity when $v(\mathcal F)$ tends to infinity. 
Moreover, the minimal degree satisfies
\begin{equation}\label{degree_condition}
\delta= \delta(\mathcal F')=\omega(m^{\frac{1}{2}-\frac{1}{2s}}).
\end{equation} 
Every vertex of $\mathcal F'$ has degree at most $\Delta \delta$ for a constant $\Delta$. 

Now we form a coloured graph on the vertex set $\mathcal A$.
\begin{definition}
A coloured graph $\mathcal F'_C(\mathcal A)$ is obtained as follows. 
For any two elements $E_1, E_2\in\mathcal A$, we say they form a 
\textit{red edge}
if their common neighbourhood in $\mathcal B$ has size at least $2st$.
We say they form a \textit{blue edge} if their common neighbourhood in $\mathcal B$ has size at least $1$ and at most $2st-1$. 
The rest of the pairs are left \textit{uncoloured}.
\end{definition}

\begin{lemma}\label{red_means_clean} 
Suppose there is a red copy of $K_{s,t}$ in the coloured graph $\mathcal F'_C(\mathcal A)$. Assume as 
an independent subset of the graph $\mathcal F'$, it is clean. 
Then based on it, one can embed a clean copy of $K_{s,t}^{\text{sub}}$ in $\mathcal F'$. \end{lemma}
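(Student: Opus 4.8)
The plan is to place the $s+t$ branch vertices of $K_{s,t}^{\text{sub}}$ at the vertices of the given red $K_{s,t}$ and to choose its $st$ subdivision vertices greedily out of the common neighbourhoods that the red edges guarantee. Write the red copy as having classes $\{A_1,\dots,A_s\}$ and $\{B_1,\dots,B_t\}$, all lying in $\mathcal A$; being red, each pair $A_i,B_j$ has a common neighbourhood $N_{\mathcal F'}(A_i)\cap N_{\mathcal F'}(B_j)\subseteq\mathcal B$ of size at least $2st$.

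First I would record the structural observation that makes the counting work: all $s+t$ of the $A_i,B_j$ lie in the \emph{same} part of the original bipartition $(X_1\times X_2)\cup(Y_1\times Y_2)$ of $\mathcal F$. Indeed each red edge $A_iB_j$ carries a common $\mathcal F'$-neighbour, hence an $\mathcal F$-neighbour, which forces $A_i$ and $B_j$ into the part of $\mathcal F$ opposite to it; propagating along the red edges (all pairs $A_iB_j$ being red) puts all of $\{A_i\}\cup\{B_j\}$ into one part, say $X_1\times X_2$. Hence their $2(s+t)$ underlying vertices in $K_n$ all lie in $X_1\cup X_2$ and are pairwise distinct by the cleanness hypothesis, whereas any common $\mathcal F'$-neighbour of some $A_i,B_j$ lies in $Y_1\times Y_2$ and so has its two underlying vertices in $Y_1\cup Y_2$, automatically disjoint from those of the branch vertices. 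This is exactly why the greedy only has to avoid collisions among the subdivision vertices themselves.

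Next I would isolate the estimate driving the greedy step: for a fixed $A_i=(a_1,a_2)$ and any $D=(d_1,d_2)\in Y_1\times Y_2$, at most two neighbours $C=(c_1,c_2)$ of $A_i$ in $\mathcal F$ share an underlying vertex with $D$. This is immediate from the definition of $E(\mathcal F)$ together with properness of $\chi$: such a $C$ satisfies $\chi(a_1c_1)=\chi(a_2c_2)$, so if $c_1=d_1$ this common colour is pinned down and then $c_2$ is the unique endpoint at $a_2$ of a $\chi$-edge of that colour (if there is one), and symmetrically if $c_2=d_2$. With this in hand, I would enumerate the $st$ pairs $(i,j)$ in an arbitrary order and pick $C_{ij}\in N_{\mathcal F'}(A_i)\cap N_{\mathcal F'}(B_j)$ one at a time, keeping the underlying vertices of the $C$'s selected so far pairwise distinct; when $(i,j)$ comes up, each of the at most $st-1$ already-chosen $C_{i'j'}$ rules out at most two candidates by the estimate (which applies a fortiori inside the common neighbourhood), and since $2st>2(st-1)$ a valid $C_{ij}$ remains.

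Finally I would assemble the embedding: send the branch vertices $s_i,t_j$ of $K_{s,t}^{\text{sub}}$ to $A_i,B_j$ and the subdivision vertices $w_{ij}$ to $C_{ij}$, so that the edges $s_iw_{ij}$ and $w_{ij}t_j$ become the $\mathcal F'$-edges $A_iC_{ij}$ and $B_jC_{ij}$; vertex-injectivity holds because $\mathcal A\cap\mathcal B=\emptyset$ and the $C_{ij}$ are pairwise distinct, having pairwise distinct underlying vertices. Cleanness is then bookkeeping of three disjoint blocks of underlying vertices: the $2(s+t)$ of the branch vertices (distinct, inside $X_1\cup X_2$), the $2st$ of the subdivision vertices (distinct by the greedy, inside $Y_1\cup Y_2$), and the two blocks disjoint, for a total of $2(s+t+st)=2v(K_{s,t}^{\text{sub}})$ distinct vertices. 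The one genuinely load-bearing point — and the step I would be most careful about — is the structural observation of the second paragraph: without it a direct greedy at threshold $2st$ would also have to dodge the underlying vertices of the $s+t$ branch vertices, and the naive count would not close; everything else is routine.
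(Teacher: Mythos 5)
Your proof is correct and follows essentially the same greedy strategy as the paper's: place the branch vertices on the clean red $K_{s,t}$ and choose the $st$ subdivision vertices one at a time from the common neighbourhoods of size at least $2st$, excluding at most $2k$ candidates at the $(k+1)$-st step. Your second-paragraph estimate is precisely the content of the paper's Lemma~\ref{not_many_intersection} (rederived from properness of the colouring), and your observation that branch and subdivision vertices lie in opposite parts of the bipartition, so their underlying vertices cannot collide, simply makes explicit a point the paper leaves implicit.
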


\begin{proof} 
The proof is a greedy embedding process of a clean copy of $K_{s,t}^{\text{sub}}$ in $\mathcal F'$. 
For each red pair in the red and clean copy of $K_{s,t}$, 
we try to choose an element from their common neighbourhood  in $\mathcal B$, and we need to ensure the resulting copy of $K_{s,t}^{\text{sub}}$ is clean. 
Let us list the red pairs as $P_1,\cdots,P_{st}$. 

For $P_1$, we choose arbitrarily an element $B_1$ which is in the common neighbourhood of the pair $P_1$ of $\mathcal A$ elements. 
Inductively, suppose we have already chosen $B_1,\cdots, B_k \in \mathcal B$, $k<st$, whose underlying vertices are pair-wise distinct, and each $B_i$ is the common neighbourhood of the pair $P_i$ of $\mathcal A$ elements. 
Then we consider the common neighbourhood of the pair $P_{k+1}$, which has size 
at least $2st > 2k$.  
Due to Lemma~\ref{not_many_intersection}, there are at most $2k$ elements which contain at least  one of the underlying vertices of $B_1,\cdots, B_k$. 
 Excluding these choices, we still have choices available. 
Therefore, we can choose one such element as $B_{k+1}$ and continue the induction step. The induction process terminates
at $k=st$ and we have found a clean copy of $K_{s,t}^{\text{sub}}$.
\end{proof}

For the following several lemmas, we suppose there is no clean copy of $K_{s,t}^{\text{sub}}$ found in $\mathcal F'$. 
\begin{lemma}\label{Janzer_blue} 
For any element $B\in \mathcal B$, 
among its neighbourhood $N_{\mathcal F'}(B)$ one can find at least 
$\Omega(\delta^2)$ many blue edges of $\mathcal F'_C(\mathcal A)$.
Consequently, for every $A\in \mathcal A$, 
its blue neighbourhood in $\mathcal F'_C(\mathcal A)$ has size $|N_b(A)|= \Theta(\delta^2)$.
\end{lemma}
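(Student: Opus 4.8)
The plan is to fix $B\in\mathcal B$ and count pairs of vertices inside $N_{\mathcal F'}(B)\subset\mathcal A$. Since every vertex of $\mathcal F'$ has degree at least $\delta$, we have $|N_{\mathcal F'}(B)|\geq\delta$, so there are $\binom{|N_{\mathcal F'}(B)|}{2}=\Omega(\delta^2)$ candidate pairs. Each such pair $\{A_1,A_2\}$ has $B$ in its common neighbourhood, so it is either red or blue — it is never uncoloured. The whole content of the lemma is therefore that \emph{not too many} of these pairs can be red: if $\Omega(\delta^2)$ of them were red, we would produce a clean red $K_{s,t}$ in $\mathcal F'_C(\mathcal A)$ and then invoke Lemma~\ref{red_means_clean} to get a clean copy of $K_{s,t}^{\text{sub}}$ in $\mathcal F'$, contradicting the standing assumption. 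So the skeleton is: (i) red pairs through $B$ are few; (ii) hence blue pairs through $B$ are $\Omega(\delta^2)$; (iii) sum over $B$ and compare with the degree bound in $\mathcal A$ to get the ``consequently'' statement.

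For step (i): let $R$ be the graph on vertex set $N_{\mathcal F'}(B)$ consisting of the red pairs. I claim $e(R)=o(\delta^2)$. Indeed if $e(R)=\Omega(\delta^2)=\omega(|N_{\mathcal F'}(B)|^{3/2-1/(2s)})$, then since $|N_{\mathcal F'}(B)|\leq\Delta\delta$ grows to infinity, by $\text{ex}(N,K_{s,t}^{\text{sub}})=O(N^{3/2-1/(2s)})$ (Lemma~\ref{Kst'turan}, or rather the Turán bound for $K_{s,t}$ itself, which is even smaller) the graph $R$ contains a copy of $K_{s,t}$. This copy is red in $\mathcal F'_C(\mathcal A)$ by construction. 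It is also clean: its vertices lie in $N_{\mathcal F'}(B)$, and by Lemma~\ref{not_many_intersection} any two neighbours of the common vertex $B$ share no underlying vertex of $K_n$, so all the $\mathcal A$-vertices of the copy are pairwise disjoint as subsets of $V(K_n)$. (Here one should be slightly careful and possibly pass to a sub-copy or argue with the bipartition, but this is routine.) Then Lemma~\ref{red_means_clean} gives a clean $K_{s,t}^{\text{sub}}$ in $\mathcal F'$, contradiction. Hence $e(R)=o(\delta^2)$, and the number of blue pairs inside $N_{\mathcal F'}(B)$ is $\binom{|N_{\mathcal F'}(B)|}{2}-e(R)=\Omega(\delta^2)$.

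For the ``consequently'' clause: double count triples $(A, B, A')$ where $B\in\mathcal B$ and $\{A,A'\}$ is a blue edge in $N_{\mathcal F'}(B)$. By the first part this count is $\sum_{B\in\mathcal B}\Omega(\delta^2)=\Omega(|\mathcal B|\delta^2)=\Omega(m\delta^2)$ from below. On the other hand each blue edge $\{A,A'\}$ is counted once for each $B$ in its (blue, hence size $\leq 2st-1=O(1)$) common neighbourhood, so the count is $O(1)$ times the number of blue edges of $\mathcal F'_C(\mathcal A)$; therefore $\mathcal F'_C(\mathcal A)$ has $\Omega(m\delta^2)$ blue edges, i.e. $\sum_{A\in\mathcal A}|N_b(A)|=\Omega(m\delta^2)$, giving $|N_b(A)|=\Omega(\delta^2)$ \emph{on average}. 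For the uniform lower bound $|N_b(A)|=\Theta(\delta^2)$ for \emph{every} $A$, combine the upper bound $|N_b(A)|\leq\binom{|N_{\mathcal F'}(A)|}{2}\leq\binom{\Delta\delta}{2}=O(\delta^2)$ with a localized version of the first part: apply the red-pair argument inside $N_{\mathcal F'}(A)$ — the $\Omega(\delta^2)$ pairs of common neighbours of $A$ in $\mathcal B$ are blue except for $o(\delta^2)$ red ones, but a pair of $\mathcal B$-vertices with a large common neighbourhood in $\mathcal A$ is exactly the dual situation, so the same Turán-plus-cleanness-plus-Lemma~\ref{red_means_clean} contradiction applies after swapping the roles of $\mathcal A$ and $\mathcal B$. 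The main obstacle is making sure the ``clean'' hypothesis in Lemma~\ref{red_means_clean} is genuinely supplied — that is, checking that the $K_{s,t}$ we extract from a dense red graph can be taken with all underlying $K_n$-vertices distinct; Lemma~\ref{not_many_intersection} handles the interactions with the pivot vertex $B$ (or $A$), and for the remaining pairwise disjointness one deletes a bounded number of bad vertices before applying the Turán bound, which costs nothing since we have a polynomial factor to spare.
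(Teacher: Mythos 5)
Your first half is correct and is essentially the paper's argument: every pair inside $N_{\mathcal F'}(B)$ is coloured, Lemma~\ref{not_many_intersection} makes any red $K_{s,t}$ found inside $N_{\mathcal F'}(B)$ automatically clean, Lemma~\ref{red_means_clean} converts it into a clean $K_{s,t}^{\text{sub}}$, and a Tur\'an-type bound (K\H{o}v\'ari--S\'os--Tur\'an for $K_{s,t}$, or Tur\'an for $K_{s+t}$ as the paper uses) forces all but $o(\delta^2)$ of the $\Omega(\delta^2)$ pairs to be blue. Two small slips there: the $K_{s,t}$ bound $O(N^{2-1/s})$ is \emph{larger}, not smaller, than $N^{3/2-1/(2s)}$ (any $o(N^2)$ bound suffices, so this is harmless), and no deletion of ``bad vertices'' is needed since cleanness inside $N_{\mathcal F'}(B)$ is automatic; also $|N_b(A)|\leq\binom{|N_{\mathcal F'}(A)|}{2}$ is not the right justification for the upper bound (a blue neighbour corresponds to a single common neighbour, so the correct count is $\sum_{B\in N_{\mathcal F'}(A)}(\deg B-1)=O(\delta^2)$), though the $O(\delta^2)$ conclusion stands.

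The genuine gap is the ``consequently'' clause, which asserts $|N_b(A)|=\Omega(\delta^2)$ for \emph{every} $A\in\mathcal A$. Your double count over all $B\in\mathcal B$ only gives the averaged statement $\sum_A|N_b(A)|=\Omega(m\delta^2)$ (which, with the $O(\delta^2)$ upper bound, gives the pointwise bound only for a constant fraction of the $A$'s, not all of them, and the next lemma in the paper invokes it for an arbitrary $Y$), and the proposed repair by ``swapping the roles of $\mathcal A$ and $\mathcal B$'' does not prove it. The dual argument controls the codegrees in $\mathcal A$ of pairs $\{B,B'\}\subset N_{\mathcal F'}(A)$, whereas the lemma is about the number of vertices $A'\in\mathcal A$ whose codegree with $A$ lies in $[1,2st-1]$; you give no derivation of the latter from the former, and in fact none is available at this level of precision. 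What must be excluded is that almost all of the $\geq\delta(\delta-1)$ paths $A$--$B$--$A'$ end at \emph{red} neighbours of $A$; a configuration with $\Theta(\delta^2/st)$ red neighbours of $A$, each of codegree exactly $2st$, produces only $O(\delta^2)$ cherries $B$--$A'$--$B'$, hence is compatible with almost every pair $\{B,B'\}\subset N_{\mathcal F'}(A)$ being dual-blue, yet it would leave $N_b(A)$ empty; note also that a red star centred at $A$ contains no $K_{s,t}$ for $s\geq2$, so the first part gives no control of $A$'s red degree either. The paper handles this clause differently, by double counting the paths $A$--$B$--$A'$ directly, using that a blue pair has at most $2st-1$ common neighbours in $\mathcal B$ together with almost-regularity; whichever route you choose, the missing step in your write-up is a bound on the red contribution $\sum_{A'\ \text{red}}|N_{\mathcal F'}(A)\cap N_{\mathcal F'}(A')|$ at the fixed vertex $A$, and the dual swap does not supply it.
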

\begin{proof}
For any vertex $B \in \mathcal B$, its neighbourhood is denoted by $N_{\mathcal F'}(B)$, which induces a subgraph in $\mathcal F'_C(\mathcal A)$. This coloured subgraph has edge number $\Omega(\delta^2)$ and each edge is either red or blue. 
There can not exist a red copy of $K_{s,t}$, because otherwise one finds a clean copy of $K_{s,t}^{\text{sub}}$
by Lemma~\ref{red_means_clean}.
Therefore, Tur\'{a}n's theorem ensures there are at least $\Omega(\delta^2)$ blue edges found within this subgraph. 
Now for any $A\in \mathcal A$, its neighbourhood in $\mathcal F'$ has size at least $\delta$. 
Note also the vertices of each blue edge in $\mathcal F'_C(\mathcal A)$ have at most $st-1$ common neighbours in $\mathcal B$.
By double counting, we see these elements in $\mathcal B$ 
provide at least $\Theta(\delta^2)$ blue edges. 
On the other hand, the number of 
blue neighbours of $A\in \mathcal A$ is at most $O(\delta^2)$ 
since the graph $\mathcal F'$ is almost regular. Thus, it has order $\Theta(\delta^2)$.
\end{proof}

\begin{definition}
For any $X_1,\cdots,X_s\in \mathcal A$, define  $\mathcal P(X_1,\cdots,X_s)$ as the set of $Y\in \mathcal A$ so that 
there exists $B_i\in \mathcal N_{\mathcal F'}(X_i,Y)$ for each $i=1,\cdots,s$, 
and they together form a clean copy of $K_{s,1}^{\text{sub}}$.
\end{definition}

\begin{lemma} Summing over all $(X_1,\cdots,X_s) \in \mathcal A^s$ 
which form a clean independent set in $\mathcal F'$, we have 
\begin{equation} \sum |\mathcal P(X_1,\cdots,X_s)|=\Omega(m\delta^{2s}).
\end{equation}
In words, the total number of tuples 
$(X_1,\cdots,X_s, Y)$ for which one can choose one common neighbour for each 
$(X_i,Y)$ for all $i \in \{1,\cdots, s\}$ to form a clean copy of $K_{s,1}^{\text{sub}}$ is at least $\Omega(m\delta^{2s})$.
\end{lemma}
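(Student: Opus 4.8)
The plan is to fix the prospective centre $Y\in\mathcal A$ of the copy of $K_{s,1}^{\text{sub}}$ and, for each such $Y$, to count ordered $s$-tuples of what I will call \emph{blue spokes at $Y$}: a blue spoke at $Y$ is a pair $(X,B)$ with $B\in N_{\mathcal F'}(Y)$, $X\in N_{\mathcal F'}(B)$, and $XY$ a blue edge of $\mathcal F'_C(\mathcal A)$. If an ordered $s$-tuple $\big((X_1,B_1),\dots,(X_s,B_s)\big)$ of blue spokes at $Y$ has the property that its $2s$ elements of $V(\mathcal F')$ are pairwise clean (i.e.\ their underlying vertex pairs in $K_n$ are pairwise disjoint), then the set $\{Y,X_1,\dots,X_s,B_1,\dots,B_s\}$ forms a clean copy of $K_{s,1}^{\text{sub}}$ with centre $Y$, leaves $X_i$ and subdivision vertices $B_i$: the triples $\{B_i,Y,X_i\}$ are pairwise clean by Lemma~\ref{not_many_intersection}, and the remaining cleanness is imposed by hand, so $Y\in\mathcal P(X_1,\dots,X_s)$; moreover $(X_1,\dots,X_s)$ is automatically a clean independent set of $\mathcal F'$, since $\mathcal F'$ has no edge inside $\mathcal A$. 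Hence it suffices to produce, for each $Y\in\mathcal A$, at least $\Omega(\delta^{2s})$ such pairwise-clean spoke-tuples, and then to pass from spoke-tuples to tuples $(X_1,\dots,X_s,Y)$ by controlling multiplicity.

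The core is a greedy construction together with the cleanness bookkeeping, for which I would first record that, by Lemma~\ref{not_many_intersection}, every neighbourhood $N_{\mathcal F'}(v)$ is a clean set. Two consequences: (a) any fixed vertex of $K_n$ lies in at most one element of $N_{\mathcal F'}(Y)$; and (b) writing $N_b(Y)\subseteq\bigcup_{B\in N_{\mathcal F'}(Y)}N_{\mathcal F'}(B)$ as a union of at most $\Delta\delta$ clean sets, any fixed vertex of $K_n$ lies in at most $\Delta\delta$ elements of $N_b(Y)$. Combining (a) and (b) with the fact that a blue edge has at most $2st-1$ common neighbours in $\mathcal B$, I get that for any set $S$ of $O(1)$ vertices of $K_n$, at most $O(\delta)$ blue spokes at $Y$ have their $X$-coordinate or their $B$-coordinate meeting $S$. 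On the other hand, the total number of blue spokes at $Y$ is at least $|N_b(Y)|=\Omega(\delta^2)$ by Lemma~\ref{Janzer_blue}. So I build the $s$-tuple greedily: after $k<s$ spokes have been chosen (pairwise clean, and each clean with $Y$), they occupy at most $4k=O(1)$ underlying vertices, and by the above at least $\Omega(\delta^2)-O(\delta)=\Omega(\delta^2)$ blue spokes avoid all of them; any of these is a legitimate $(k+1)$-st choice, cleanness with $Y$ being automatic from Lemma~\ref{not_many_intersection}. Multiplying the $s$ bounds yields $\Omega(\delta^{2s})$ pairwise-clean spoke-tuples for each fixed $Y$, hence $\Omega(m\delta^{2s})$ over all $Y\in\mathcal A$.

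To finish, I would bound the multiplicity of the map sending a pairwise-clean $s$-tuple of blue spokes at $Y$ to the tuple $(X_1,\dots,X_s,Y)$: for a fixed target, each $B_i$ must be one of the at most $2st-1$ common neighbours of the blue edge $X_iY$, so at most $(2st-1)^s=O(1)$ spoke-tuples map to it. Therefore the number of tuples $(X_1,\dots,X_s,Y)$ with $(X_1,\dots,X_s)$ a clean independent set and $Y\in\mathcal P(X_1,\dots,X_s)$ is at least $\Omega(m\delta^{2s})/(2st-1)^s=\Omega(m\delta^{2s})$; since $\sum|\mathcal P(X_1,\dots,X_s)|$ counts exactly all such tuples, the claimed bound follows. (Here $\delta=\omega(m^{\frac12-\frac1{2s}})\to\infty$ is what makes the deletions $O(\delta)$ negligible against $\Omega(\delta^2)$ at every greedy step.)

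The step I expect to be the main obstacle is precisely this cleanness accounting: the whole argument hinges on Lemma~\ref{not_many_intersection} (cleanness of neighbourhoods in $\mathcal F'$), which is exactly what keeps the number of "forbidden" spokes at $O(\delta)$ rather than $\Omega(\delta^2)$, both when ruling out spokes that collide with previously chosen elements and when bounding, via (b), how many elements of $N_b(Y)$ pass through a fixed vertex of $K_n$. Everything else — the greedy multiplication and the $O(1)$ multiplicity bound — is routine.
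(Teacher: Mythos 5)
Your proposal is correct and follows essentially the same route as the paper: fix $Y\in\mathcal A$, use $|N_b(Y)|=\Theta(\delta^2)$ from Lemma~\ref{Janzer_blue}, greedily pick $s$ blue neighbours $X_i$ together with subdivision vertices $B_i\in N_{\mathcal F'}(X_i)\cap N_{\mathcal F'}(Y)$ while using Lemma~\ref{not_many_intersection} (cleanness of neighbourhoods) and the bounded common neighbourhood of blue pairs to keep the excluded choices at $O(\delta)$ per step, then sum over $Y$. Your only deviation is bookkeeping — counting spoke pairs $(X_i,B_i)$ and dividing out the $(2st-1)^s=O(1)$ multiplicity at the end — which if anything makes explicit a step the paper's proof handles implicitly.
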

\begin{proof} By previous lemma, for each $Y\in \mathcal A$, its blue neighbourhood $N_b(Y)$ 
in $\mathcal F'_C(\mathcal A)$
has size $\Theta(\delta^2)$. Choose $X_1 \in N_b(Y)$ arbitrarily in $\Theta(\delta^2)$ ways, with some $B_1$ which is 
a common neighbour of $Y$ and $X_1$ in $\mathcal F'_C(\mathcal A)$. 
For any $B\in N_{\mathcal F'}(Y) \backslash \{B_1\}$, in its neighbourhood there are at most $2$ elements whose underlying vertices intersect the two $K_n$ vertices underlying the element $X_1$. 
So the possible choices for $X_2$ are at least 
$\Theta(\delta^2)-\Theta(\delta)=\Theta(\delta^2)$.
Inductively, suppose $k<s$, 
we have chosen $X_1,\cdots,X_k$ and $B_i\in N_{\mathcal F'}(X_i) \bigcap N_{\mathcal F'}(Y)$ for $i=1,\cdots,k$, 
so that they together form a clean copy of $K_{k,1}^{\text{sub}}$. Then for any 
$B\in N_{\mathcal F'}(Y) \backslash \{B_1,\cdots, B_k\}$, at most $2k$ of its neighbours share 
one underlying vertex with one of the elements $X_i$, for $i=1,\cdots,k$. 
Excluding these, we have $\Theta(\delta^2)$ choices for $X_{k+1}$ together 
with some $B_{k+1} \in N_{\mathcal F'}(Y)\bigcup N_{\mathcal F'}(X_{k+1})$. 
We finish after $s$ steps.
In total we have $\Theta(\delta^{2s})$ choices for the tuples $(X_1,\cdots,X_s)$, 
each of which gives an ordered 
clean copy of $K_{s,1}^{\text{sub}}$.
Therefore, we see for each $Y \in \mathcal A$ fixed, 
there are at least $\Theta(\delta^{2s})$ many $(s+1)$-tuples, namely $(X_1,\cdots,X_s,Y)$, which can induce 
clean copies of $K_{s,1}^{\text{sub}}$'s in $\mathcal F$. 
Summing over $Y$ the conclusion follows. 
\end{proof}

\begin{lemma}\label{BX_k_manychoices}
For any $s$-tuple $X_1,\cdots,X_s\in \mathcal A$ with $|\mathcal P(X_1,\cdots,X_s)| =\omega(1)$, 
there exists some $B\in \mathcal B$ and some $X_k$ in this list,
 a subset $\mathcal Y\subset \mathcal P(X_1,\cdots,X_s)$ of size 
$|\mathcal Y|=\Omega( |\mathcal P(X_1,\cdots,X_s)|)$, so that 
$\mathcal Y\bigcup \{ X_k \} \subset N_{\mathcal F'}(B)$.
\end{lemma}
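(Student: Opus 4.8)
The plan is to exploit the defining property of $\mathcal P(X_1,\dots,X_s)$ together with the fact that $\mathcal F'$ is $\Delta$-almost regular, pushing a counting argument through the subdivision vertices. Recall that each $Y \in \mathcal P(X_1,\dots,X_s)$ witnesses a clean copy of $K_{s,1}^{\text{sub}}$; in particular for each such $Y$ there is a choice of $B_i(Y) \in N_{\mathcal F'}(X_i) \cap N_{\mathcal F'}(Y)$ for $i=1,\dots,s$, with the underlying $K_n$-vertices all distinct. First I would fix one index, say associate to each $Y$ the element $B_1(Y) \in N_{\mathcal F'}(X_1) \cap N_{\mathcal F'}(Y)$. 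Since $B_1(Y) \in N_{\mathcal F'}(X_1)$ and $X_1$ has degree at most $\Delta\delta$ in $\mathcal F'$, the map $Y \mapsto B_1(Y)$ takes at most $\Delta\delta$ distinct values. Hence by pigeonhole there is a single $B \in N_{\mathcal F'}(X_1)$ which is the chosen neighbour $B_1(Y)$ for at least $|\mathcal P(X_1,\dots,X_s)|/(\Delta\delta)$ of the $Y$'s.

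This is not yet enough, because we want $|\mathcal Y| = \Omega(|\mathcal P|)$, not $\Omega(|\mathcal P|/\delta)$, so a single pigeonhole over one index loses a factor of $\delta$. The fix is to not fix an index in advance: for each $Y$ we have the $s$ elements $B_1(Y), \dots, B_s(Y)$, with $B_i(Y) \in N_{\mathcal F'}(X_i)$. The key point to extract is that we only need \emph{one} pair $(B, X_k)$ with $Y \in N_{\mathcal F'}(B)$ and $X_k \in N_{\mathcal F'}(B)$, i.e. $B$ adjacent to both $Y$ and $X_k$ in $\mathcal F'$. So for each $Y$ and each $i$, the pair $(B_i(Y), X_i)$ is a candidate. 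Summing over all $Y \in \mathcal P(X_1,\dots,X_s)$ and all $i \in \{1,\dots,s\}$ we get $s|\mathcal P|$ candidate pairs $(B, X_k)$, each with $B \in N_{\mathcal F'}(X_k)$. The number of \emph{possible} pairs $(B, X_k)$ is at most $\sum_{k=1}^s \deg_{\mathcal F'}(X_k) \le s\Delta\delta$. Hmm — this again only gives a $\delta$-deficient bound; I need to be more careful about what the statement actually asks, and the resolution must be that $|\mathcal P(X_1,\dots,X_s)| = O(\delta^2)$ a priori (which should follow from almost-regularity: $Y$ must be a blue-or-red neighbour, through some $B$, of each $X_i$, and $X_1$ has only $\Theta(\delta^2)$ such), so that an appropriate double-counting does yield a constant fraction.

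Concretely, then, the argument I would write is: (1) establish the a priori bound $|\mathcal P(X_1,\dots,X_s)| = O(\delta^2)$ using that every $Y \in \mathcal P$ lies in the (blue plus red) neighbourhood of $X_1$ in $\mathcal F'_C(\mathcal A)$, whose size is $\Theta(\delta^2)$ by Lemma~\ref{Janzer_blue} and almost-regularity; (2) for each $Y \in \mathcal P$ pick the witnessing $B_1(Y) \in N_{\mathcal F'}(X_1) \cap N_{\mathcal F'}(Y)$; (3) since $B_1(Y)$ ranges over at most $\Delta\delta$ values in $N_{\mathcal F'}(X_1)$, pigeonhole gives a $B$ with $|\{Y : B_1(Y) = B\}| \ge |\mathcal P|/(\Delta\delta)$, and this set is the desired $\mathcal Y$ with $X_k = X_1$, once we observe $|\mathcal P|/(\Delta\delta) = \Omega(|\mathcal P|)$ — which holds precisely because, when $|\mathcal P| = \Theta(\delta^2)$, losing a factor $\delta$ still leaves $\Omega(\delta) $... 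The hard part, and the step I expect to be the real obstacle, is exactly reconciling the claimed loss-free conclusion $|\mathcal Y| = \Omega(|\mathcal P|)$ with the pigeonhole bound; the resolution must be that in the regime where this lemma is applied $|\mathcal P|$ is already forced to be small (of order at most $\delta^{2}$ per earlier counting, or perhaps the lemma is invoked only for tuples with $|\mathcal P|$ proportional to $\delta$-free quantities), so that one should instead iterate the pigeonhole across all $s$ indices $i=1,\dots,s$ and all possible $B$, using the clean-$K_{s,1}^{\text{sub}}$ structure to bound the multiplicity, and then a constant fraction survives. I would therefore organize the proof as a double count of triples $(Y, i, B_i(Y))$, bound the number of $(i,B)$-slots by $s\Delta\delta$, and conclude that some slot $(k,B)$ receives $\Omega(|\mathcal P|/\delta)$ of the $Y$'s; combined with the structural bound $|\mathcal P| = O(\delta^{2})$ this is $\Omega(|\mathcal P|)$ provided $|\mathcal P| = \Theta(\delta^2)$, which is the only case that matters downstream.
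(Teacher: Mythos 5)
There is a genuine gap, and you in fact point at it yourself without closing it: every version of your counting pigeonholes over the neighbourhood $N_{\mathcal F'}(X_k)$ (or over the $s\Delta\delta$ slots $(k,B)$), which has size $\Theta(\delta)$, and therefore can only produce a set $\mathcal Y$ of size $\Omega(|\mathcal P|/\delta)$. No a priori bound on $|\mathcal P|$ can repair this: even granting $|\mathcal P|=\Theta(\delta^2)$, the conclusion of your pigeonhole is $|\mathcal Y|=\Omega(\delta)$, which is not $\Omega(|\mathcal P|)=\Omega(\delta^2)$, since $\delta=\omega(m^{\frac12-\frac1{2s}})\to\infty$. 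The tell-tale sign that the approach cannot work as written is that you never use the standing hypothesis under which this lemma is stated, namely that $\mathcal F'$ contains no clean copy of $K_{s,t}^{\text{sub}}$; without that hypothesis the statement has no reason to hold, because the witnesses $B_i(Y)$ for different $Y\in\mathcal P$ could be spread essentially uniformly over the $\Theta(\delta)$ neighbours of each $X_i$.

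The missing idea (the paper's argument) is to pigeonhole over a \emph{constant}-size set of obstructions rather than over a $\delta$-size neighbourhood. Take a maximal subset $\{Y_1,\dots,Y_\ell\}\subset\mathcal P$ that, together with $X_1,\dots,X_s$ and suitable subdivision vertices $B_{ij}\in\mathcal B$, forms a clean copy of $K_{s,\ell}^{\text{sub}}$; the no-clean-$K_{s,t}^{\text{sub}}$ assumption forces $\ell<t$. For every remaining $Y\in\mathcal P\setminus\{Y_1,\dots,Y_\ell\}$, maximality means $Y$ cannot be appended cleanly, so some element of $N_{\mathcal F'}(Y)\cap N_{\mathcal F'}(X_k)$ shares an underlying $K_n$-vertex with some $B_{ij}$. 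There are only $s^2\ell\le s^2t=O(1)$ pairs $(X_k,B_{ij})$, so one fixed pair blocks $\Omega(|\mathcal P|)$ of these $Y$'s; and given $(X_k,B_{ij})$, Lemma~\ref{not_many_intersection} leaves at most two elements of $N_{\mathcal F'}(X_k)$ meeting $B_{ij}$ in an underlying vertex, so a second constant-size pigeonhole fixes a single $B\in N_{\mathcal F'}(X_k)$ that is a common neighbour of $X_k$ and of $\Omega(|\mathcal P|)$ many $Y\in\mathcal P$, which is exactly the claimed $\mathcal Y\cup\{X_k\}\subset N_{\mathcal F'}(B)$. Both pigeonholes are over $O(1)$ classes, which is what makes the conclusion loss-free; this is the step your plan is missing.
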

\begin{proof} For $\mathcal P= \mathcal P(X_1,\cdots,X_s)$, choose a maximal set $\{Y_1,\cdots,Y_\ell\}\subset \mathcal P$
so that one can choose $B_{ij}$ for each $1\leq i\leq s, 1\leq j\leq \ell$, so that with $X_1,\cdots,X_s$ together, they form a
clean copy of $K_{s,\ell}^{\text{sub}}$. By assumption it follows $\ell<t$. 
Now for any $Y\mathcal \in \mathcal P \backslash \{Y_1,\cdots,Y_\ell\}$, 
there exists some $X_k$ and some $B_{ij}$ 
from the above list, so that 
$N_{\mathcal F'}(Y) \bigcap N_{\mathcal F'}(X_k)$ and $\{B_{ij}\}$ share one or two underlying vertices. 
 Since there are only $s^2\ell$ choices of the pair $(X_k,B_{ij})$, 
it follows some specific choice of $(X_k,B_{ij})$ 
has been chosen for $\Omega(|\mathcal P|)$ times. Since among $N_{\mathcal F'}(X_k)$ there are at most two 
elements which shares one underlying vertex with $B_{ij}$. So we can choose $B$ which share one underlying vertex with 
$B_{ij}$, and a set $\mathcal Y\subset \mathcal P$ with 
$|\mathcal Y|=\Omega(|\mathcal P|)$, so that $\mathcal Y\bigcup \{X_k\} \subset N_{\mathcal F'}(B)$.
\end{proof}

\begin{lemma}\label{non_red_Y} From the set $\mathcal Y$ obtained from the previous lemma, 
the number of $s$-tuples $(Y_1,\cdots,Y_s)\in \mathcal Y^s$ which are pairwise non-red in 
$\mathcal F'_C(\mathcal A)$
and form a clean independent set in $\mathcal A$ is at least $\Omega(|\mathcal P(X_1,\cdots,X_s)|^s)$.
\end{lemma}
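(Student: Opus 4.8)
The plan is to use the fact that $\mathcal Y$ lies entirely inside one neighbourhood $N_{\mathcal F'}(B)$, which forces both automatic cleanness and a K\H{o}v\'ari--S\'os--Tur\'an bound on the red edges spanned by $\mathcal Y$.

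First I would record two structural observations about $\mathcal Y$. By Lemma~\ref{BX_k_manychoices} there is a vertex $B\in\mathcal B$ with $\mathcal Y\cup\{X_k\}\subset N_{\mathcal F'}(B)$; applying Lemma~\ref{not_many_intersection} with $e=B$ and $f,f'$ any two of its $\mathcal F'$-neighbours shows that any two distinct elements of $\mathcal Y$ have disjoint underlying vertex sets in $K_n$. Since moreover $\mathcal Y\subset\mathcal A$ lies on one side of the bipartition of $\mathcal F'$, hence is independent in $\mathcal F'$, every $s$-tuple of pairwise distinct elements of $\mathcal Y$ automatically forms a clean independent set in $\mathcal A$. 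Writing $N:=|\mathcal Y|$ and recalling that $\mathcal Y$ is only produced when $|\mathcal P(X_1,\dots,X_s)|=\omega(1)$ and then $N=\Omega(|\mathcal P(X_1,\dots,X_s)|)$, in particular $N\to\infty$, the number of $s$-tuples of pairwise distinct elements of $\mathcal Y$ is $N(N-1)\cdots(N-s+1)=\Omega(N^s)$.

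Next I would bound the red edges inside $\mathcal Y$. Let $G_{\mathrm{red}}$ be the graph on vertex set $\mathcal Y$ whose edges are the red edges of $\mathcal F'_C(\mathcal A)$. I claim $G_{\mathrm{red}}$ contains no copy of $K_{s,t}$: such a copy would be a red copy of $K_{s,t}$ in $\mathcal F'_C(\mathcal A)$ supported on $s+t$ distinct elements of $\mathcal Y$, hence clean as an independent subset of $\mathcal F'$ by the first paragraph, so Lemma~\ref{red_means_clean} would produce a clean copy of $K_{s,t}^{\mathrm{sub}}$ in $\mathcal F'$, contradicting the standing assumption that no such copy exists. Since $s\le t$, the K\H{o}v\'ari--S\'os--Tur\'an theorem then gives $e(G_{\mathrm{red}})=O(N^{2-1/s})$.

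Finally I would combine the two counts. The number of $s$-tuples of pairwise distinct elements of $\mathcal Y$ that contain at least one red pair is at most $\binom{s}{2}\cdot 2\,e(G_{\mathrm{red}})\cdot N^{s-2}=O(N^{s-1/s})=o(N^s)$. Subtracting this from $\Omega(N^s)$ leaves $\Omega(N^s)=\Omega(|\mathcal P(X_1,\dots,X_s)|^s)$ many $s$-tuples of pairwise distinct, pairwise non-red elements of $\mathcal Y$, and by the first paragraph each of these is a clean independent set in $\mathcal A$, which is exactly the assertion. The only step that is not pure bookkeeping is the $K_{s,t}$-freeness of $G_{\mathrm{red}}$, and even that is immediate from Lemma~\ref{red_means_clean} once the cleanness of $\mathcal Y$ has been observed; I expect no serious obstacle beyond being careful that the red $K_{s,t}$ we forbid is genuinely clean.
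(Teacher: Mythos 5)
Your proof is correct, and it reaches the conclusion by a different combinatorial tool than the paper. You share the paper's opening observation: since $\mathcal Y\cup\{X_k\}\subset N_{\mathcal F'}(B)$, Lemma~\ref{not_many_intersection} makes any two elements of $\mathcal Y$ disjoint in underlying vertices, so every $s$-tuple of distinct elements of $\mathcal Y$ is automatically a clean independent set, and only the non-red condition needs counting. From there the paper argues via Ramsey numbers: a red $K_{s+t}$ inside $\mathcal Y$ is impossible (it would contain a clean red $K_{s,t}$, contradicting Lemma~\ref{red_means_clean}), hence every $R(s+t,s)$-subset of $\mathcal Y$ contains a non-red $s$-set, and the averaging quotient $\binom{|\mathcal Y|}{R(s+t,s)}\big/\binom{|\mathcal Y|-s}{R(s+t,s)-s}$ yields a constant fraction of all $s$-sets, i.e. $\Omega(|\mathcal Y|^s)$. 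You instead forbid a red $K_{s,t}$ directly (again clean because its vertices lie in $\mathcal Y$, which is the point you rightly flagged), apply K\H{o}v\'ari--S\'os--Tur\'an to conclude the red graph on $\mathcal Y$ has $O(N^{2-1/s})$ edges with $N=|\mathcal Y|=\Omega(|\mathcal P(X_1,\cdots,X_s)|)=\omega(1)$, and subtract the $O(N^{s-1/s})=o(N^s)$ tuples containing a red pair from the $\Omega(N^s)$ tuples of distinct elements. Both routes are sound; yours gives the slightly stronger conclusion that almost all such $s$-tuples are non-red (rather than a proportion at least $1/\binom{R(s+t,s)}{s}$) and avoids Ramsey numbers at the cost of invoking the KST bound, whereas the paper's Ramsey-plus-averaging argument needs only $K_{s+t}$-freeness of the red graph and no sparsity estimate. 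Your version also makes explicit the cleanness of the forbidden red $K_{s,t}$, which in the paper is implicit in the phrase ``contains no red copy of $K_{s+t}$.''
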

\begin{proof} Note the set $\mathcal Y$ shares the same common neighbour $B\in \mathcal B$, so any two elements in 
$\mathcal Y$ do not share any same underlying vertex in $K_n$. So we only need to worry about the non-red 
property. This is from a standard double counting. Recall that 
the coloured graph $\mathcal F'_C(\mathcal A)$ contains no red copy of complete graph $K_{s+t}$. 
Thus, consider the Ramsey number $R(s+t,s)$ which is the smallest integer $n_0$ such that any coloured $K_{n_0}$ either contains an $(s+t)$ red clique or an $s$ non-red clique. It follows that every subset of size $R(s+t,s)$ must contain a copy of non-red $s$-tuple. Now each non-red $s$-tuple is counted for $|\mathcal Y| - s\choose R(s+t,s)-s$ times. So we can find at least 
$ {|\mathcal Y| \choose R(s+t,s)} / {|\mathcal Y| - s\choose R(s+t,s)-s}$ many such $s$-tuples, which is 
$\Omega(|\mathcal Y|^s)=\Omega(|\mathcal P(X_1,\cdots,X_s)|^s)$, by Lemma~\ref{BX_k_manychoices}.
\end{proof}

Then, we can summarise what can be obtained assuming that $\mathcal F'$ contains no 
clean copy of $K_{s,t}^{\text{sub}}$.
\begin{prop}\label{big_light_neighbour}
 Suppose $\mathcal F'$ contains no clean copy of 
$K_{s,t}^{\text{sub}}$. Then 
there exists distinct elements 
$E_1,\cdots, E_s\in \mathcal A$ with the following properties. 
\begin{enumerate}[(a)]
\item $\{E_1,\cdots,E_s\}$ is a clean independent set in $\mathcal F'$.
\item $E_1,\cdots,E_s$ are pairwise non-red in $\mathcal F'_C(\mathcal A)$.
\item the common blue neighbourhood of this $s$-tuple in $\mathcal F'_C(\mathcal A)$, 
denoted by $N_b(E_1,\cdots,E_s)$, has size at least $\omega(\delta)$.
\end{enumerate}
\end{prop}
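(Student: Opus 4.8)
The plan is to extract $E_1,\dots,E_s$ from the reservoir of pairwise non‑red clean $s$‑tuples produced by Lemmas~\ref{Janzer_blue}--\ref{non_red_Y}, locating by a double count one such tuple whose members have $\omega(\delta)$ common blue neighbours. Throughout I work under the standing hypothesis that $\mathcal F'$ has no clean $K_{s,t}^{\mathrm{sub}}$, so those lemmas apply, and I use two structural facts about a neighbourhood $N_{\mathcal F'}(B)$, $B\in\mathcal B$: it is clean (Lemma~\ref{not_many_intersection}), and it contains no red $K_{s,t}$ of $\mathcal F'_C(\mathcal A)$ (otherwise Lemma~\ref{red_means_clean} would produce a clean $K_{s,t}^{\mathrm{sub}}$). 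Hence, by K\H{o}v\'{a}ri--S\'{o}s--Tur\'{a}n, the red subgraph of $\mathcal F'_C(\mathcal A)$ induced on $N_{\mathcal F'}(B)$ has only $O(\delta^{2-1/s})$ edges, so the blue subgraph there is $(1-o(1))$‑dense and only $o(\delta)$ vertices of $N_{\mathcal F'}(B)$ have red‑degree comparable to $\delta$ inside it.

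First I fix a rich base tuple. By the lemma preceding Lemma~\ref{BX_k_manychoices}, $\sum|\mathcal P(X_1,\dots,X_s)|=\Omega(m\delta^{2s})$, the sum over clean independent $s$‑tuples, of which there are at most $m^s$. Since $\delta=\omega(m^{1/2-1/(2s)})$ we have $\delta^{2s}=\omega(m^{s-1})$, so the clean independent $s$‑tuples with $|\mathcal P(\cdot)|$ bounded contribute only $O(m^s)=o(m\delta^{2s})$; consequently there is a clean independent $s$‑tuple $(X_1,\dots,X_s)$ with $|\mathcal P(X_1,\dots,X_s)|=\omega(1)$. Now Lemma~\ref{BX_k_manychoices} supplies $B\in\mathcal B$, an index $k$, and $\mathcal Y\subseteq\mathcal P(X_1,\dots,X_s)$ with $|\mathcal Y|=\Omega(|\mathcal P(X_1,\dots,X_s)|)$ and $\mathcal Y\cup\{X_k\}\subseteq N_{\mathcal F'}(B)$; by Lemma~\ref{not_many_intersection} the set $\mathcal Y$ is clean, by the sparsity above its blue graph is $(1-o(1))$‑dense, and Lemma~\ref{non_red_Y} yields $\Omega(|\mathcal Y|^s)$ pairwise non‑red clean $s$‑tuples inside $\mathcal Y^s$.

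To finish I double count $|N_b(E_1,\dots,E_s)|$ over those pairwise non‑red clean $s$‑tuples $(E_1,\dots,E_s)\in\mathcal Y^s$, rewriting the sum as $\sum_{Z\in\mathcal A}\#\{(E_1,\dots,E_s)\in\mathcal Y^s\ \text{non‑red clean}: Z\ \text{is blue to every}\ E_i\}$. The contribution of $Z\in N_{\mathcal F'}(B)$ is already of order $\Omega(\delta)$ on average: for such $Z$ all pairs $(Z,E_i)$ are red or blue, so $Z$ is blue to all but $o(|\mathcal Y|)$ members of $\mathcal Y$ for most $Z$, and $|N_{\mathcal F'}(B)|\ge\delta$. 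The contribution of $Z\notin N_{\mathcal F'}(B)$ is estimated with Lemma~\ref{Janzer_blue}: every $E_i$ has $\Theta(\delta^2)$ blue neighbours, so the blue neighbourhoods of suitably chosen $E_i$ overlap well beyond $N_{\mathcal F'}(B)$, and feeding in the strict inequality $\delta=\omega(m^{1/2-1/(2s)})$ once more pushes the average past $\delta$. Averaging, some tuple attains $|N_b(E_1,\dots,E_s)|=\omega(\delta)$; being a pairwise non‑red clean independent $s$‑subset of $\mathcal Y$, it satisfies (a), (b) and (c).

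The hard part is exactly this last count: a single neighbourhood $N_{\mathcal F'}(B)$ only delivers $\Theta(\delta)$ common blue neighbours — no $\mathcal B$‑vertex has more than $\Delta\delta$ neighbours, so one midpoint cannot do better — so reaching $\omega(\delta)$ forces one to account for the common blue neighbours reached through many different length‑two paths. This is where the $\Theta(\delta^2)$ bound of Lemma~\ref{Janzer_blue}, the K\H{o}v\'{a}ri--S\'{o}s--Tur\'{a}n sparsity of the red graph, and the precise exponent $\tfrac32-\tfrac1{2s}$ in~(\ref{degree_condition}) (the extremal exponent for $K_{s,t}^{\mathrm{sub}}$, Lemma~\ref{Kst'turan}) all have to be combined. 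I expect the bulk of the work to be making this estimate precise while tracking the cleanness losses via Lemma~\ref{not_many_intersection}, exactly as in the proofs of Lemmas~\ref{BX_k_manychoices} and~\ref{non_red_Y}.
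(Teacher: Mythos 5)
There is a genuine gap, and it sits exactly where you flag it yourself: the final count. Your first two steps (locating one clean independent $s$-tuple $(X_1,\dots,X_s)$ with $|\mathcal P(X_1,\dots,X_s)|=\omega(1)$, then invoking Lemmas~\ref{BX_k_manychoices} and~\ref{non_red_Y} to get a clean set $\mathcal Y\subset N_{\mathcal F'}(B)$ with many pairwise non-red $s$-tuples) are fine, but they only guarantee $|\mathcal Y|=\omega(1)$, and, as you concede, a single midpoint $B$ can never produce more than $O(\delta)$ common blue neighbours since $|N_{\mathcal F'}(B)|\leq\Delta\delta$. The step that is supposed to break the $\delta$ barrier --- ``the blue neighbourhoods of suitably chosen $E_i$ overlap well beyond $N_{\mathcal F'}(B)$, and feeding in $\delta=\omega(m^{1/2-1/(2s)})$ pushes the average past $\delta$'' --- is not an argument. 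Lemma~\ref{Janzer_blue} only gives each $E_i$ individually $\Theta(\delta^2)$ blue neighbours; it gives no control whatsoever on the $s$-wise intersections of these blue neighbourhoods, which could a priori be as small as $O(1)$ outside $N_{\mathcal F'}(B)$. Establishing that these intersections are large is precisely the content of the proposition, so your sketch assumes what it must prove. Moreover, by freezing one rich tuple $(X_1,\dots,X_s)$ at the outset you discard the very objects that should serve as the $\omega(\delta)$ common blue neighbours, namely the many \emph{different} $X$-tuples attached to the same $(Y_1,\dots,Y_s)$.

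The paper's mechanism is different and genuinely global. It counts good $2s$-tuples $(X_1,\dots,X_s,Y_1,\dots,Y_s)$ (with $\{Y_i\}$ clean, pairwise non-red, each $Y_i$ blue to each $X_j$, and $\{Y_1,\dots,Y_s,X_k\}\subset N_{\mathcal F'}(B)$ for some $k$ and $B$): summing the bound of Lemma~\ref{non_red_Y} over \emph{all} $s$-tuples $(X_1,\dots,X_s)$ and applying Jensen together with the exact exponent in (\ref{degree_condition}) gives at least $\omega(m\delta^{2s})$ such tuples, while the data $(B,k,Y_1,\dots,Y_s,X_k)$ can be chosen in only $O(m\delta^{s+1})$ ways. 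Pigeonholing, some fixed $(B,k,Y_1,\dots,Y_s,X_k)$ admits $\omega(\delta^{s-1})$ completions in the remaining $s-1$ coordinates, and since each such coordinate is a common blue neighbour of $(Y_1,\dots,Y_s)$, that tuple $(E_1,\dots,E_s)=(Y_1,\dots,Y_s)$ has $\omega(\delta)$ common blue neighbours, giving (a)--(c). If you want to salvage your route, you would have to run the averaging over all base tuples simultaneously (i.e.\ keep the sum $\sum|\mathcal P(X_1,\dots,X_s)|^s$ intact rather than extracting a single maximiser) --- at which point you have reproduced the paper's argument; as written, the decisive estimate is missing.
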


\begin{proof}
By previous lemma, with the assumption that $\mathcal F'$ contains no clean copy of 
$K_{s,t}^{\text{sub}}$, we are going to count the total number of \textit{good} $2s$-tuples, i.e., the size of 
the collection $\mathcal S$ of $(X_1,\cdots,X_s, Y_1,\cdots,Y_s)$'s with the following properties. 
\begin{enumerate}
\item $\{Y_1,\cdots, Y_s\}$ forms a clean independent set in $\mathcal F'$ and they are pair-wise non-red in 
$\mathcal F'_C(\mathcal A)$.
\item $Y_i\in N_b(X_j)$ for any $i, j =1,\cdots, s$.
\item there exists $X_k$ in this list, and some $B\in \mathcal B$, so that $\{Y_1,\cdots,Y_s,X_k\} \subset N_{\mathcal F'}(B)$.
\end{enumerate}
Applying Lemma~\ref{non_red_Y}, using Jensen's inequality, and noticing the degree condition (\ref{degree_condition}), 
the total number of such $2s$-tuples is at least
\begin{equation}
              \sum  \Omega(|\mathcal P(X_1,\cdots,X_s)|^s) \geq 
\Omega \big( m^s (\frac{m\delta^{2s}}{m^s})^s\big)=\omega(m\delta^{2s}),
\end{equation} 
where the sum is taken over all $s$-tuples $(X_1,\cdots,X_s)$. 
On the other hand, we can choose $B$ in $m$ ways. Then based on $B$, 
we can choose $k$ from $\{1,\cdots, s\}$ and then choose $Y_1,\cdots,Y_s,X_k \in N_{\mathcal F'}(B)$ 
in $O(\delta^{s+1})$ ways. Thus, there are still $\omega(\delta^{s-1})$ ways to extend this to a good $2s$-tuple. 
Note the number of coordinates to decide is $s-1$. 
Therefore, for a certain fixed choice of $X_k$ and $(Y_1,\cdots,Y_s)=(E_1,\cdots,E_s)$, 
there are at least $\omega(\delta)$ distinct elements in $\mathcal A$,
each of which is a common blue neighbour of $E_1,\cdots, E_s$, which is the porperty (c). Note by choice, $(E_1,\cdots,E_s)$ satisfies properties (a) and (b). The proposition follows. 
\end{proof}

\begin{proof}[End of proof of Theorem~\ref{lower}] 
Suppose for contradiction that the coloured graph $K_n$ contains no two colour isomorphic and vertex disjoint
 copies of $K_{s,t}^{\text{sub}}$. 
Then we obtain the bipartite graph $\mathcal F'$ 
on the vertex bipartition $\mathcal A \bigcup \mathcal B$, as explained earlier in this section. 
Then it follows $\mathcal F'$ does not contain a clean copy of $K_{s,t}^{\text{sub}}$.
Apply Proposition~\ref{big_light_neighbour}, and obtain elements $E_1,\cdots, E_s\in \mathcal A$, with the three properties stated there. In particular, the non-red property (b) implies that for any the pair $(E_i,E_j)$, 
their common neighbourhood in $\mathcal B$ has size 
at most $2st$. 

Choose any element $F_1\in N_b(E_1,\cdots,E_s)$.
For any blue edge $(E_i,F_1)$, choose one element $B_{i,1}$ from their common neighbourhood in $\mathcal B$. 
Since $\{B_{i,1}\}_{i=1}^s$ are all neighbours of $F_1$ in $\mathcal F'$, 
they must have pair-wise disjoint underlying vertices in $K_n$. Also, the underlying pair of vertices of $F_1$ does not intersect any of the underlying vertices for any $E_i$ above, since $E_i$ and $F_1$ have a common neighbour $B_{i,1}$. Thus, we have found 
a clean copy of $K_{s,1}^{\text{sub}}$ so far. 

Inductively, suppose we have found  $F_1,\cdots,F_k \in \mathcal A$ and $B_{i,j} \in N_{\mathcal F'}(E_i) \bigcap N_{\mathcal F'}(F_j)$, with 
$1\leq i\leq s, 1\leq j \leq k$, 
so that they together with $E_1,\cdots,E_s$ form a clean copy of $K_{s,k}^{\text{sub}}$, where each $B_{i,j}$ is a common neighbour of $E_i$ and $F_j$ in $\mathcal F'$. 
To finish the induction step, we look for $F_{k+1}$ from the $N_b(E_1,\cdots,E_s)$ and then choose $B_{i,k+1}$ 
from the common neighbourhood of $E_i$ and $F_{k+1}$, for each $i=1,\cdots,s$, 
which helps to create a clean copy of $K_{s,k+1}^{\text{sub}}$.  

Now the union of all the already embedded elements in $\mathcal A$ has size $sk$. 
We need to consider two kinds of pairs. Firstly, any pair $(E_i,E_j)$ is non-red in $\mathcal F'_C(\mathcal A)$ by 
Proposition~\ref{big_light_neighbour}. Secondly, any pair $(E_i,F_j)$ is a blue edge in $\mathcal F'_C(\mathcal A)$. 
So both pairs have their common neighbourhood of size at most $2st$. So their union is of size at most 
$2st \times ( {s \choose 2} + sk)$ which is a constant. It follows that, 
the union of all the neighbourhoods in $\mathcal A$ of all these elements has size at most $O(\delta)$.
Excluding these elements from $N_b(E_1,\cdots,E_s)$, then for any choice of $F_{k+1}$, we can choose arbitrarily an element 
$B_{i,k+1}\in N_{\mathcal F'}(E_i,F_{k+1})$. They are different with any previous fixed $B_{i,j}$ because they do not belong to the common neighbourhood of any pair $(E_i,F_j)$. They are different from each other because they do not belong to the common neighbourhood of any pair $(E_i,F_j)$. 

However, there is one more obstruction for a clean copy of $K_{s,k+1}^{\text{sub}}$. 
This obstruction is the situation that for $i'\neq i$ and for some $j\leq k$, 
the elements $B_{i',k+1}$ and $B_{i,j}$ share some common underlying vertex in $K_n$.
Note the common neighbourhood of $E_i$ and $F_j$ has size at most $2st$, which involves at most $4st$ underlying vertices. It follows that for the neighbourhood of the element $E_{i'}$,
there are at most $4st$ elements whose underlying vertices intersect these elements. 
Counting over all the possible choices of $E_i,E_{i'},F_j$, we again obtain that there are at most $O(\delta)$ 
vertices in $\mathcal A$ which might cause such a situation. 
Therefore, noticing (c) of Proposition~\ref{big_light_neighbour}, 
we can exclude these elements from $N_b(E_1,\cdots,E_s)$ and choose any element $F_{k+1}$ from the leftover vertices. 
Then the induction step was done and we obtain a clean copy of $K_{s,k+1}^{\text{sub}}$.

Finally, the induction stops when we reach $k+1=t$ and we have embedded a clean copy of 
$K_{s,t}^{\text{sub}}$ in $\mathcal F'$. It means that we have found a pair of colour isomorphic and vertex disjoint copies of $K_{s,t}^{\text{sub}}$ in $K_n$, which is a contradiction. The proof is complete. 
\end{proof}

\section{Some Further Discussions}\label{final}
The function $f_2(n, H)$ is related with the classical extremal number $\text{ex}(n,H)$ of a bipartite graph. 
We first note that, for the $1$-subdivision of complete graph $K_t^{\text{sub}}$, in~\cite{janzer2018improved}, Janzer proved that 
$\text{ex}(n, K_t^{\text{sub}})=O(n^{\frac32-\frac{1}{4t-6}})$.
More recently in~\cite{xu2020color}, Ge and Xu showed the lower bound $f_2(n,K_t^{\text{sub}})=\Omega(n^{1+\frac{1}{2t-3}})$. Comparing the above two results, it is plausible the following hold. \\

\noindent{\textbf{Problem A}}.
For a bipartite graph $H$ and for $\varepsilon \in [0,\frac 12]$, 
is it true that for any $C_1>0$ there exists $C_2>0$, so that, 
\begin{enumerate}[(a)]
\item $\text{ex}(n, H) \leq C_2 n^{\frac 32-\varepsilon}$ implies $f_2(n,H) \geq C_1 n^{1+2\varepsilon}$?
\item Or equivalently, 
$f_2(n,H) \leq C_1 n^{1+2\varepsilon}$  implies $\text{ex}(n,H)\geq C_2 n^{\frac 32-\varepsilon}$?
 \end{enumerate}\
 \\
Formulation (a) is trivially satisfied by the four cycle $C_4$ by taking $\varepsilon=0$ and 
some appropriate choices of $C_1$ and $C_2$.
More generally, (a) is also confirmed for all even cycles (see Theorem 1.11 of~\cite{janzer2020rainbow}).
Let us look at the family of large rooted powers of balanced trees. 
Problem A also helps to connect Theorem~\ref{main} with the recent breakthrough result towards the famous rational exponent conjecture in~\cite{bukh2018rational}.
More precisely, recall the following result.
\begin{theorem}[Lemma 1.2 of~\cite{bukh2018rational}]\label{rational}
Let $T$ be a balanced rooted tree with $\rho_T=\frac ba$, 
and let 
$T^{K_0}$ be its $K_0$-th rooted power. 
Then for sufficiently large $K_0$,
\begin{equation}
\text{ex}(n,T^{K_0})=\Omega(n^{2-\frac{1}{\rho_T}}).
\end{equation}
\end{theorem}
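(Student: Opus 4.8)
\textbf{Proof proposal for Theorem~\ref{rational}.}
The plan is to reprove the lower bound $\text{ex}(n,T^{K_0})=\Omega(n^{2-1/\rho_T})$ by a random polynomial construction, following the same template as the proof of Theorem~\ref{main} but adapted from the colouring setting to the classical Tur\'an setting. First I would fix a large prime $q$, set $n=q^a$ where $a$ is the number of unrooted vertices of $T$, and identify the vertex set with $\Bbb F_q^a$. I would then take a random polynomial map $F=(f_1,\dots,f_b)\colon \Bbb F_q^a\times \Bbb F_q^a\to \Bbb F_q^b$, with each coordinate $f_i$ drawn uniformly and independently from the space $\mathcal P_d$ of $2a$-variable polynomials of degree at most $d$ (with $d$ a large constant depending on $T$ and $K_0$), and define a graph $G$ on $\Bbb F_q^a$ by joining $u$ to $v$ precisely when $F(u,v)=F(v,u)=0$ — or, more conveniently, by first building a bipartite graph on two copies of $\Bbb F_q^a$ via the zero set of $F$ and then projecting/symmetrising exactly as in Section~3. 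By Lemma~\ref{random_polynomial} the probability that a fixed potential edge is present is $q^{-b}$ (up to the symmetrisation bookkeeping), so $\Bbb E(e(G))=\Theta(q^{2a-b})=\Theta(n^{2-b/a})=\Theta(n^{2-1/\rho_T})$, which is the target order.

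The core of the argument is to show that with high probability $G$ contains no copy of $T^{K_0}$. Fix a pair of disjoint $r$-subsets $X$ (and here there is only one root set, not two as in the colouring problem), and let $\mathcal C(X)$ be the collection of labelled copies of $T$ in $G$ rooted at $X$; as in Proposition~\ref{no_color_iso_pair} it suffices to show $|\mathcal C(X)|<K_0$ for all $X$ with high probability. I would bound the $p$-th moment $\Bbb E(|\mathcal C(X)|^p)$ by summing over all graphs $H=\bigcup_{j=1}^p T^{(j)}$ realisable as a union of $p$ rooted copies of $T$, together with the induced edge identification pattern: the number of embeddings realising a given $H$ is $O(q^{a(v(H)-r)})$, while the probability that all the required edge equations hold is $q^{-b\,k(H)}$, where $k(H)$ is the constraint number (the minimal number of edge-equations determining the system). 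The balancedness of $T$ gives, by an induction identical in spirit to Lemma~\ref{balance_two} but one-sided (one tree rather than a pair), the inequality $k(H)\ge \rho_T\,(v(H)-r)$, i.e.\ $b\,k(H)\ge a(v(H)-r)$, so each summand is $O(1)$ and the whole sum is $O_p(1)$. Then, exactly as in Section~3, I would realise $\mathcal C(X)$ (via shadows in the bipartite model) as the $\Bbb F_q$-points of a difference of two bounded-complexity varieties, apply the Lang--Weil dichotomy of Lemma~\ref{Langweil}, and conclude that $\text{\textbf{Prob}}(|\mathcal C(X)|>K_0)=O_p(q^{-p})$ for a suitable constant $K_0$. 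Taking $p$ larger than $r$ and union-bounding over the at most $q^{ra}\cdot$(constant) choices of $X$ kills the bad event, so with high probability no $T^{K_0}$ appears; deleting one vertex from each remaining copy (there are $o(e(G))$ of them) leaves a $T^{K_0}$-free graph with $\Omega(n^{2-1/\rho_T})$ edges.

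The step I expect to be the main obstacle is the same one that is delicate in Proposition~\ref{no_color_iso_pair}: making the bridge between the combinatorial object $\mathcal C(X)$ and an honest algebraic variety to which Lang--Weil applies. One must introduce order types for the embeddings (since the symmetrisation from bipartite to complete graph depends on the order of the coordinates), check that the degenerate locus — coordinates colliding with each other or with the roots — is cut out by bounded-complexity equations, and verify the complexity bound $M=M(T,K_0,d)$ is genuinely independent of $q$. A secondary point needing care is the one-sided balancedness induction: here $H$ is a union of copies of $T$ glued only along the common root set $R$ (with possible further edge coincidences among the copies), so when passing from $p-1$ to $p$ copies one must argue that adjoining the new copy $T^{(p)}$ forces at least $e(S)$ new independent constraints, where $S=V(T^{(p)})\setminus V(H')$, and then invoke $e(S)\ge \rho_T|S|$. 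The boundary case where $T^{(p)}$ contributes no new vertices (all of its vertices already present) must be handled separately but is easy. Everything else is a routine transcription of Section~3 with the numerical substitution $2a\rightsquigarrow a$, $2r\rightsquigarrow r$, $2a\text{-variables}\rightsquigarrow 2a\text{-variables}$ unchanged, and colours $q^{2a}\rightsquigarrow$ edge probability $q^{-b}$.
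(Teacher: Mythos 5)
First, a bookkeeping remark: the paper does not prove Theorem~\ref{rational} at all; it is quoted verbatim from Bukh--Conlon (Lemma 1.2 of \cite{bukh2018rational}), whose proof is exactly the random algebraic construction you outline. So your overall template (random polynomial graph, $p$-th moment bound on rooted copies via a one-sided analogue of Lemma~\ref{balance_two}, shadows and order types to pass to a bounded-complexity variety, Lang--Weil dichotomy, union bound over root sets) is the right one and matches the known proof in spirit.

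However, as written your construction does not give the claimed exponent, because you have swapped the roles of $a$ and $b$. You take $n=q^a$, vertex set $\mathbb{F}_q^a$, and $F\colon\mathbb{F}_q^a\times\mathbb{F}_q^a\to\mathbb{F}_q^b$, so the edge probability is $q^{-b}$ and $\mathbb{E}(e(G))=\Theta(q^{2a-b})=\Theta(n^{2-b/a})=\Theta(n^{2-\rho_T})$; your asserted identity $n^{2-b/a}=n^{2-1/\rho_T}$ is false, since $\rho_T=b/a$ and $1/\rho_T=a/b$. For a rooted tree $b=a+r-1\geq a$, so $n^{2-\rho_T}$ is genuinely weaker than the target $n^{2-1/\rho_T}$ (strictly, unless $a=b$), and the theorem does not follow from your graph even though the moment computation happens to survive the swap. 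The same slip shows up internally: you state the needed inequality as ``$k(H)\geq\rho_T(v(H)-r)$, i.e.\ $bk(H)\geq a(v(H)-r)$,'' but $k(H)\geq\rho_T(v(H)-r)$ means $ak(H)\geq b(v(H)-r)$, which is not the inequality your summand $q^{a(v(H)-r)}q^{-bk(H)}$ requires. The correct parametrisation (and the one consistent with Section~3 of the paper, where vertices already live in $\mathbb{F}_q^b$) is $n=q^b$, $F\colon\mathbb{F}_q^b\times\mathbb{F}_q^b\to\mathbb{F}_q^a$, edge probability $q^{-a}$, expected edge count $\Theta(q^{2b-a})=\Theta(n^{2-a/b})=\Theta(n^{2-1/\rho_T})$; then the summand is $q^{b(v(H)-r)}q^{-ak(H)}$ and the exponent is nonpositive precisely when $k(H)\geq\rho_T(v(H)-r)$, which is exactly what the balancedness induction provides. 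With that correction, the remaining steps you sketch go through as in Section~3 (take $p>rb$ for the union bound over the $O(q^{rb})$ root sets, as in Proposition~\ref{no_color_iso_pair}); you should also add one line ensuring the edge count is $\Omega(q^{2b-a})$ simultaneously with the no-$T^{K_0}$ event (pairwise independence of edge indicators from Lemma~\ref{random_polynomial} plus Chebyshev suffices), since the two events must be intersected before concluding.
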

Therefore, for large rooted powers of a balanced tree $T$
with $\rho_T=\frac{b}{a}$, where $2a\geq b$ and $K_0$ is sufficiently large, we have 
$\text{ex}(n,T^{K_0})= \Omega(n^{2-\frac{1}{\rho_T}})=\Omega(n^{2-\frac ab})= \Omega(n^{\frac 32-\frac{2a-b}{2b}})$.
Then we note also that $n^{\frac{2a}{b}}=n^{1+\frac{2a-b}{b}}$.
Thus, in order to answer Problem A, formulation (a) in the affirmative for all large rooted powers of balanced trees, 
it is equivalent with finding matching lower bounds of Theorem~\ref{main} for these graphs. 
Note this matching lower bound was obtained in the special case of the theta graphs $\theta_{3,\ell}$ (i.e., the graph consisting $\ell$ internally vertex distinct paths of length $3$ connecting two fixed vertices, See Section 5 of~\cite{conlon2021repeated}), and we have confirmed this for $1$-subdivisions of complete bipartite graphs 
in Theorem~\ref{f2_lowerbound}.

Another comment is as follows. 
If Problem A can be solved for all large powers of rooted balanced trees, 
then combined with a family version of Theorem~\ref{main}, 
one can also deduce a family version of the rational exponent conjecture for the function $f_2(n,H)$
 (for the graph version formulation, see Conjecture 3.1 of~\cite{xu2020color}).

Now we look at Problem A, formulation (b).
Recall (\ref{2v-2_thm}) shows the relation between edge and vertex numbers has implications for upper bounding the $f_2$ function.
If Problem A, formulation (b) is established, one would also see that, the edge/vertex relation of a graph would give 
some lower bounds of its extremal numbers. 
Therefore, we take one step further, posing the following conjecture. \\

\noindent{ \textbf{Conjecture B.}} If $H$ is a bipartite graph containing a cycle, then 
\begin{equation}
f_2(n,H) = \left\{
        \begin{array}{ll}
            O(n), &  \text{ if } e(H)\geq 2v(H)-4,\\
            O\big(n^{\frac{2v(H)-4}{e(H)}}\big), & \text{ if } e(H)\leq 2v(H)-4.
        \end{array}
    \right.
\end{equation}
The open problem that whether or not $f_2(n,C_4)=O(n)$ would obtain an affirmative answer following Conjecture B.
 Moreover, if one assumes that Problem A has an affirmative answer, 
 Conjecture B would imply the famous conjecture that for all even cycle $C_{2\ell}$, 
$\text{ex}(n,C_{2\ell})=\Theta (n^{1+\frac{1}{\ell}})$. 
 
\section*{ACKNOWLEDGMENTS}
The authors thank Zixiang Xu for introducing the colour isomorphic problem to them. 
X-C. Liu is supported by Fapesp P\'os-Doutorado grant (Grant Number  2018/03762-2). 
\bibliographystyle{plain}
\addcontentsline{toc}{chapter}{Bibliography}
\bibliography{coloriso}
\end{document}